\documentclass[a4paper]{amsart}
\usepackage[utf8]{inputenc}
\usepackage{amsfonts}
\usepackage{amsmath,latexsym,amssymb,amsfonts, amsthm}
\usepackage{esint}
\usepackage{graphicx}
\usepackage{amscd}
\usepackage{color}
\usepackage{bm}             
\usepackage{enumerate}
\usepackage{aliascnt}
\usepackage[dvips]{epsfig}
\usepackage{psfrag}
\usepackage{enumitem}
\usepackage{epsfig}

\usepackage[
hmarginratio={1:1},     
vmarginratio={1:1},     
textwidth=16cm,        
textheight=21cm,
heightrounded,          
]{geometry}

\usepackage[
backend=biber,
maxnames=99,
style=alphabetic,
sorting=nyt,
doi=false,
url=false,
isbn=false
]{biblatex}
\AtEveryBibitem{%
	\clearlist{language}%
	\clearfield{langid}%
}
\defbibheading{mybib}[References]{%
	\section*{#1}%
	\markboth{}{}%
}
\usepackage{graphicx,color}
\usepackage[colorlinks]{hyperref}
\usepackage{cleveref}
\hypersetup{linkcolor=blue,citecolor=blue,filecolor=black,urlcolor=blue}
\usepackage{mathtools}

\renewcommand{\b}{\beta}

\newtheorem{theorem}{Theorem}[section]
\crefname{theorem}{theorem}{theorems}
\Crefname{theorem}{Theorem}{Theorems}

\newaliascnt{proposition}{theorem}
\newtheorem{proposition}[proposition]{Proposition}
\aliascntresetthe{proposition}
\crefname{proposition}{proposition}{propositions}
\Crefname{proposition}{Proposition}{Propositions}

\newaliascnt{corollary}{theorem}
\newtheorem{corollary}[corollary]{Corollary}
\aliascntresetthe{corollary}
\crefname{corollary}{corollary}{corollaries}
\Crefname{corollary}{Corollary}{Corollaries}

\newaliascnt{lemma}{theorem}
\newtheorem{lemma}[lemma]{Lemma}
\aliascntresetthe{lemma}
\crefname{lemma}{lemma}{lemmas}
\Crefname{lemma}{Lemma}{Lemmas}

\theoremstyle{definition}
\newaliascnt{remark}{theorem}
\newtheorem{remark}[remark]{Remark}
\aliascntresetthe{remark}
\theoremstyle{definition}
\crefname{remark}{remark}{remarks}
\Crefname{remark}{Remark}{Remarks}

\newaliascnt{example}{theorem}

\aliascntresetthe{example}
\theoremstyle{definition}
\crefname{example}{example}{examples}
\Crefname{example}{Example}{Examples}

\newaliascnt{definition}{theorem}

\aliascntresetthe{definition}
\theoremstyle{definition}
\crefname{definition}{definition}{definitions}
\Crefname{definition}{Definition}{Definitions}
	
\numberwithin{theorem}{section}

\numberwithin{equation}{section}

\newcommand{\R}{\mathbb{R}}
\newcommand{\N}{\mathbb{N}}

\newcommand{\mf}[1]{\mathbf{#1}}
\newcommand{\bs}[1]{\boldsymbol{#1}}

\newcommand{\pa}{\partial}
\renewcommand{\S}{\mathbb{S}}

\newcommand{\cH}{{\mathcal H}}

\newcommand{\cU}{{\mathcal U}}

\newcommand{\norm}[1]{\|#1\|}

\newcommand{\ssubset}{\subset\joinrel\subset}
\definecolor{my_red}{RGB}{255, 0, 0}

\definecolor{my_green}{RGB}{20, 200, 20}

\definecolor{my_purple}{RGB}{255, 76, 135}

\newcommand{\dist}{{\mathrm{dist}}}
\newcommand{\supp}{{\mathrm{supp}}}

\newcommand{\weak}{\rightharpoonup}

\newcommand{\eps}{\varepsilon}

\DeclareMathOperator{\loc}{loc}

\usepackage{accents}

\renewcommand{\epsilon}{\varepsilon}
\DeclareUnicodeCharacter{02BC}{'}

\author[L. Giaretto]{Lorenzo Giaretto}\thanks{}
\address{Lorenzo Giaretto\newline \indent
	Dipartimento di Matematica ``Giuseppe Peano'', Universit\`a di Torino, \newline \indent
	Via Carlo Alberto 10,
	10123 Torino, Italy}
\email{lorenzo.giaretto@unito.it}

\title[On $k$-wise interaction models: H\"older bounds and properties of the limits]{On elliptic systems with $k$-wise interactions in the strong competition regime: uniform H\"older bounds and properties of the limiting configurations}
\keywords{Competition-diffusion systems; $k$-wise interaction; partial segregation; singularly perturbed elliptic systems; H\"older bounds; Liouville-type theorems}
\subjclass[2020]{35R35; 35B25 (35J61; 35J47)}
\thanks{Declarations of interest: none. \\
	Data availability: Data sharing not applicable to this article as no datasets were generated or analysed during the current study.}

\begin{document}
	
	\maketitle
	
	\begin{abstract}
		In this paper we investigate a class of variational reaction–diffusion systems with strong competition driven by beyond-pairwise interactions. The model involves $d$ nonnegative components interacting through $k$-wise terms, with $3\leq k\leq d$, and includes symmetric interaction coefficients accounting for multi-component effects as well as suitable nonlinear terms. We focus on minimal energy solutions, proving uniform-in-$\beta$ H\"older bounds up to an explicit threshold exponent depending only on the dimension of the space and on the order $k$ of the interaction.
		
		As $\beta\to+\infty$, we show that minimizers converge strongly in $H^1$ and in H\"older spaces to a partially segregated configuration, characterized as minimizer of a natural variational problem under a $k$-segregation constraint. Finally, we prove that every minimizer of the limit problem enjoys the H\"older regularity and we derive some basic extremality conditions.
	\end{abstract}

\section{Introduction and main results}
Reaction-diffusion systems in the strong competition regime have been the object of extensive research in the last decades. The majority of the works are devoted to the case of binary interactions, where components interact pairwise, but recently phase separation driven by multiple interactions has appeared in the physical literature about multicomponent liquids and gases, motivating new mathematical research. We refer to \cite{Pet14,Po,CoCoOh,LQZ24} and the references therein for a physical explanation.

\medskip
The aim of this work is to study a variational model with a quite general \emph{beyond-pairwise} interaction, where the order can range from ternary up to the number of components of the system.
To be precise, fix two integers $3\leq k\leq d$, where $d$ is the number of components (densities) and $k$ is the order of the interaction.
Let $\Omega \subset \R^N, N\geq 2$ be a bounded smooth domain, and let $(\psi_1, \psi_2, \dots, \psi_d)$ be a $d$-tuple of nonnegative Lipschitz continuous functions in $\overline{\Omega}$ satisfying the \emph{partial $k$-segregation condition}, that is,
\begin{equation}\label{PSC}
\prod_{\ell=1}^k \psi_{i_\ell} \equiv 0 \quad \text{in $\overline{\Omega}$} \quad \text{for any subset of indexes } \{i_1,\dots,i_k\}\subseteq \{1,\dots,d\}. 
\end{equation}
In other words, at most $k-1$ functions can be positive at the same point $x_0$, hence their positivity sets can partially overlap. 
For better readability of the problem under study, we introduce the following notation:  we denote with $[d]:=\{1,2,\dots,d\}$ and with $|\cdot|$ the cardinality of a set. Then, for any $J\subseteq[d]$ with $|J|=k$, namely $J=\{j_1,\dots,j_k\}$, we write more compactly
\begin{equation*}
	u_J^2:=\prod_{\ell=1}^k u_{j_\ell}^2.
\end{equation*}
Our goal is to analyse the asymptotic behavior as $\beta \to +\infty$ of some solutions to the boundary value problem
\begin{equation}\label{eq:general problem better notation}
\begin{cases} 
	\displaystyle -\Delta u_i = -\beta u_i \sum_{\substack{J\subseteq [d]\setminus \{i\}\\ |J|=k-1}} \gamma_{J,i} u_J^2 + f_{i,\b}(x,u_{i}), \quad u_i\geq 0 & \text{in }\Omega,\\
	u_i = \psi_i & \text{on }\partial \Omega,
\end{cases}
\end{equation}
where $i=1,\dots,d$, $f_{i,\b}$ is a family of Caratheodory functions satisfying suitable growth restrictions (see \eqref{eq:growth assumption weak} and \eqref{eq:growth assumption strong} below) and $\gamma_{J,i}:=\gamma_{J\cup \{i\}}=\gamma_{i,j_1,\dots,j_{k-1}}>0$ are positive coefficients modelling the strength of the interaction among the components $i_1,\dots,i_k$, which we assume to be symmetric, namely
\begin{equation}\label{eq:symmetry interaction coefficients}
	\gamma_{J,i}=\gamma_{J',i'}=\gamma_L \text{ whenever } J\cup\{i\}=J'\cup\{i'\}=L.
\end{equation} 
Notice that the equation of each component involves all its possible $k$-wise interactions with the other components.

We begin by motivating the questions addressed in this paper. Assume that (as it turns out to be true for some choices of the nonlinearity that we will specify later) \eqref{eq:general problem better notation} possesses a gradient structure and that for every $\beta>0$ fixed a (weak) solution $\mf{u}_\beta$ of the problem can be found by minimizing the energy functional associated to the system in a Sobolev space of functions with fixed traces, and assume that such a family of minimizers $\{\mf u_\b\}_{\b>0}$ is bounded in $H^1(\Omega,\R^d)$ uniformly with respect to $\beta$. This implies that, up to a subsequence, $\mf{u}_\beta \rightharpoonup \tilde{\mf{u}}$ weakly in $H^1(\Omega,\R^d)$ for some $\tilde{\mf u}$, and the following questions arise quite naturally:
\begin{itemize}
	\item[($i$)] since $\mf{u}_\beta$ is $C^{1,\alpha}$ by standard elliptic regularity for each $\beta>0$ fixed (see Remark \ref{rmk: regularity beta fixed} below), can we improve the space in which uniform-in-$\beta$ bounds hold, passing from Sobolev to H\"older spaces?
	\item[($ii$)] what can be said about the limit $\tilde{\mf{u}}$?
\end{itemize} 
We will prove uniform-in-$\b$ H\"older bounds of such \emph{minimal energy} solutions for \emph{small} $\alpha$, and characterize the limit as a solution to a minimization problem featuring partial segregation.

\medskip

The rest of the section is organized as follows. In the first two subsections \ref{sub:stime holder} and \ref{sub:esistenza minimi e regolarità} we state the new results of the paper, then in subsections \ref{sub:novità} and \ref{sub:previous works} we highlight the elements of novelty and review some of the recent literature on partial segregation phenomena, respectively. Finally, in \ref{sub:further persp} we address possible further developments.

\subsection*{Basic notation}
We begin by introducing some basic notation that will be used throughout the paper.
\begin{itemize}
	\item We denote by $B_r(x_0)$ the ball of center $x_0$ and radius $r$ in $\R^N$, and by $S_r(x_0)=\partial B_r(x_0)$. We sometimes omit the center and simply write $B_r$ when $x_0$ is either clear from the context or irrelevant. 
	\item We use the vector notation $\mf{u}:=(u_1,\dots,u_d)$ for functions $\R^N \to \R^d$.
	\item As anticipated earlier, for any set of indexes $J\subseteq [d]=\{1,\dots,d\}$ and any exponent $\gamma$, we denote by
	$$
	u_J^\gamma:=\prod_{j\in J} u_{j}^\gamma.
	$$
	Furthermore, for every $J\subseteq [d]$ with $|J|=k$ we denote the coefficient of the corresponding interaction term as
	$$
	\gamma_J=\gamma_{I,i}=\gamma_{I',i'} \qquad \text{whenever $J=I\cup\{i\}=I'\cup\{i'\}$}.
	$$
	\item We say that a function $u$ is \emph{$\alpha$-H\"older continuous} in an open set $\Omega$ ($u\in C^{0,\alpha}(\Omega)$), for some $\alpha \in (0,1)$, if $u\in C(\overline{\Omega})$ and the H\"older seminorm 
	\[
	[u]_{C^{0,\alpha}(\Omega)}:= \sup_{\substack{x \neq y \\ x, y \in \Omega}} \frac{|u(x)-u(y)|}{|x-y|^\alpha}
	\]
	is finite. Notice that the supremum can be taken equivalently on $\overline{\Omega}$ and hence with this notation $C^{0,\alpha}(\Omega)=C^{0,\alpha}(\overline{\Omega})$. No limitation on the $L^\infty$ norm of $u$ is required.\\
	We say that a function $u$ is in $C^{0,\alpha}_{\loc}(\Omega)$ if it is in $C^{0,\alpha}(\Omega')$ for every open set $\Omega'\ssubset\Omega$.
\end{itemize}

\subsection{Uniform H\"older estimates with general nonlinearities}\label{sub:stime holder}

We begin by establishing some regularity results for minimizers of the energy functional associated with \eqref{eq:general problem better notation}, under rather mild assumptions on the nonlinear terms. Our main goal is to derive H\"older estimates that are uniform with respect to the competition parameter $\beta$, both in the interior of the domain and up to the boundary.

Let $\Omega\subset\R^N$ be a domain, not necessarily bounded, with $N\geq2$, and let $k,d$ be fixed integers as in the introduction. For every $i=1,\dots,d$ and $\beta>0$, let
\[
f_{i,\beta}:\Omega\times\R\to\R
\]
be a family of Carathéodory functions satisfying the following growth assumption:
\begin{equation}\label{eq:growth assumption weak}\tag{F}
	|f_{i,\beta}(x,s)|
	\leq C(\beta)(1+|s|^{p-1})
	\quad \text{for a.e. }x\in\Omega,\ \forall s\in\R,
	\qquad p\leq 2^*.
\end{equation}
where $C(\b)>0$ is independent of $i$.

We will later require the stronger, uniform-in-$\beta$ bound
\begin{equation}\label{eq:growth assumption strong}\tag{F'}
	|f_{i,\beta}(x,s)|
	\leq C(1+|s|^{p-1})
	\quad \text{for a.e. }x\in\Omega,\ \forall s\in\R,
	\qquad p\leq 2^*,
\end{equation}
where $C>0$ is independent of $\beta$ and $i$.

The two growth assumptions above play different roles in what follows. Assumption \eqref{eq:growth assumption weak} is mainly used to ensure that the potential terms in the energy functional are well defined and to obtain the interior regularity of solutions for each fixed $\beta$. In particular, its growth constant $C(\beta)$ does not enter quantitatively into the proof of the uniform H\"older estimate below. On the other hand, the uniform growth condition \eqref{eq:growth assumption strong}, whenever assumed, provides estimates on the nonlinear terms that are uniform with respect to $\beta$ and will play a more substantial role in the compactness analysis and in the derivation of uniform a priori bounds.

We then define
\[
F_{i,\beta}(x,u):=\int_0^u f_{i,\beta}(x,s)\,ds.
\]

Fix $\beta>0$, let $\bs\phi\in H^1(\Omega,\R^d)$ be a nonnegative function satisfying the partial segregation condition \eqref{PSC}, and let $\Omega'\ssubset\Omega$ be an arbitrary open subset. We consider the problem of minimizing the energy functional associated with system \eqref{eq:general problem better notation} in $\Omega'$ among functions with prescribed trace $\bs\phi$. More precisely, setting
\begin{equation}\label{eq:def J_beta}
	J_{\beta,\bs f_\beta}(\mf u,\Omega')
	:=
	\int_{\Omega'}
	\left(
	\frac12\sum_{i=1}^d|\nabla u_i|^2
	+\frac{\beta}{2}
	\sum_{\substack{J\subseteq[d]\\ |J|=k}}
	\gamma_J u_J^2
	-\sum_{i=1}^d F_{i,\beta}(x,u_i(x))
	\right)\,dx,
\end{equation}
and
\begin{equation}\label{eq: space with fixed traces}
	\mathcal U_{\bs\phi}
	:=
	\left\{
	\mf u\in H^1(\Omega',\R^d):
	\mf u-\bs\phi\in H_0^1(\Omega',\R^d)
	\right\},
\end{equation}
we consider
\begin{equation}\label{eq:inf J_beta}
	\inf_{\mf u\in\mathcal U_{\bs\phi}}
	J_{\beta,\bs f_\beta}(\mf u,\Omega').
\end{equation}
Notice that, due to the interaction term, $J_{\b,\bs f_{\b}}$ may fail to be finite on the whole class $\cU_{\bs\phi}$. This, however, does not affect the minimization problem, since $\bs\phi$ satisfies \eqref{PSC} and therefore provides an admissible competitor with finite energy. When no ambiguity arises, we will omit the dependence of the functional on the domain from the notation.

\begin{remark}\label{rmk: regularity beta fixed}
	If \eqref{eq:inf J_beta} admits a minimizer $\mf u_\b$ for a fixed $\b>0$, it is easy to see that $\mf u_\b$ is a weak solution of \eqref{eq:general problem better notation} in $\Omega'$ with boundary datum $\bs\phi$. In particular, it satisfies
	\[
	-\Delta u_{i,\b}
	\leq f_{i,\b}(x,u_{i,\b}(x))
	\quad \text{in }\Omega'.
	\]
	Since $\bs f_{\b}$ satisfies \eqref{eq:growth assumption weak}, a standard Brezis--Kato type argument (see \cite[Lemma B.3]{Struwe_2008}) yields
	\[
	u_{i,\b}\in L^q_{\loc}(\Omega')
	\qquad\text{for every }q\geq1.
	\]
	Calderón--Zygmund theory together with Sobolev embeddings then gives interior $C^{1,\alpha}$ regularity for $\mf u_\b$.
\end{remark}

The existence of such minimizers will be addressed in Subsection \ref{sub:esistenza minimi e regolarità}. For the moment, we take their existence as given and focus on estimates that are uniform as $\beta\to+\infty$.

\begin{theorem}[Interior uniform H\"older bounds and compactness]\label{thm: regolarità interna}
	Let $\Omega\subset\R^N$ be any open set, assume that $\{\bs f_{\b}\}_\b$ is a family of Carathéodory functions satisfying \eqref{eq:growth assumption weak}, and let
	\[
	\{\mf u_\beta=(u_{1,\beta},\dots,u_{d,\beta})\}_{\beta>0}
	\subset H^1_{\loc}(\Omega,\R^d)
	\]
	be a family of weak solutions to
	\[
	-\Delta u_{i,\b}
	=
	-\beta u_{i,\b}
	\sum_{\substack{J\subseteq[d]\setminus\{i\}\\ |J|=k-1}}
	\gamma_{J,i}u_J^2
	+f_{i,\b}(x,u_{i,\b}),
	\qquad
	u_{i,\b}\geq0
	\quad\text{in }\Omega,
	\]
	such that $\mf u_\beta$ is a minimizer of $J_{\b,\bs f_\b}$ with respect to variations with compact support, that is, for every $\Omega'\ssubset\Omega$,
	\begin{equation}\label{eq:local minimality}
		J_{\beta,\bs f_\b}(\mf u_{\beta},\Omega')
		\leq
		J_{\beta,\bs f_\b}(\mf u_\beta+\bs\varphi,\Omega')
		\qquad
		\forall\bs\varphi\in H_0^1(\Omega',\R^d),
	\end{equation}
	where $J_{\b,\bs f_\b}$ is defined in \eqref{eq:def J_beta}.
	
	Then:
	\begin{enumerate}[label=(\roman*), ref=(\roman*)]
		\item there exists $\bar\nu\in(0,1)$, defined in \eqref{eq:bar nu} and depending only on the dimension $N$ and the order of the interaction $k$, such that, for every $\Omega'\ssubset\Omega$ and every $\alpha\in(0,\bar\nu)$, there exists a constant $C=C(\alpha,N,k,d,\Omega,\Omega')>0$ such that
		\begin{equation}\label{eq:interior estimate thm}
			\|\mf u_\beta\|_{C^{0,\alpha}(\Omega')}
			\leq
			C\left(
			\|\mf u_\beta\|_{L^\infty(\Omega)}
			+
			\sup\left\{
			|f_{i,\b}(x,u)|:
			x\in\Omega,\ 
			u\in[0,\sup_\Omega u_{i,\b}],\
			i=1,\dots,d
			\right\}
			\right)
		\end{equation}
		for every $\b>0$;
		
		\item if, moreover, $\{\bs f_\b\}_\b$ satisfies \eqref{eq:growth assumption strong} and $\{\mf u_\beta\}_\beta$ is uniformly bounded in $L^\infty(\Omega)$, then there exists $\tilde{\mf u}\in H^1_{\loc}(\Omega,\R^d)$ such that, up to a subsequence,
		\[
		\mf u_\beta\to\tilde{\mf u}
		\quad\text{in }H^1_{\loc}(\Omega,\R^d)
		\text{ and in }C^{0,\alpha}_{\loc}(\Omega,\R^d)
		\quad\text{for every }\alpha\in(0,\bar\nu),
		\]
		and
		\[
		\beta\int_K
		\sum_{\substack{J\subseteq[d]\\ |J|=k}}
		\gamma_Ju_{J,\beta}^2\,dx
		\to0
		\qquad\text{for every compact }K\subset\Omega.
		\]
	\end{enumerate}
\end{theorem}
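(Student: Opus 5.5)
We argue by contradiction with a blow-up analysis in the spirit of Noris--Tavares--Terracini--Verzini, adapted to $k$-wise interactions and using minimality to get Liouville-type rigidity. Assume that (i) fails for some $\alpha<\bar\nu$. Then there are sequences $\beta_n$, solutions $\mf u_n$ (normalised so that the right-hand side of \eqref{eq:interior estimate thm} equals $1$) and points $x_n,y_n\in\Omega'$ such that
\[
L_n:=\max_i\frac{|u_{i,n}(x_n)-u_{i,n}(y_n)|}{|x_n-y_n|^\alpha}\to\infty .
\]
To handle the boundary of $\Omega'$ we first multiply by a cutoff $\eta$, which lets us assume that $(x_n,y_n)$ maximises the H\"older quotient of $\eta u_{i,n}$. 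Since $\|\mf u_n\|_\infty\le1$, we get $r_n:=|x_n-y_n|\to0$.

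We then set up two rescalings:
\[
\mf v_n(x)=\frac{\eta(x_n)\mf u_n(x_n+r_nx)}{L_nr_n^\alpha},\qquad \bar{\mf v}_n(x)=\frac{\eta(x_n+r_nx)\,\mf u_n(x_n+r_nx)}{L_nr_n^\alpha}.
\]
These satisfy $[\bar{\mf v}_n]_{C^{0,\alpha}}\le1$ with quotient exactly $1$ at $(0,(y_n-x_n)/r_n)$, and $\mf v_n-\bar{\mf v}_n\to0$ locally uniformly. The $\mf v_n$ are local minimisers of the rescaled functional with competition parameter
\[
M_n=\beta_n L_n^{2k-2}r_n^{2+2\alpha(k-1)}
\]
and forcing of size $r_n^{2-\alpha}/L_n\to0$.

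Next we show compactness after subtracting values at $0$. The first case, $\mf v_n(0)$ bounded, is handled by Ascoli, which gives local uniform convergence to a nonconstant $\alpha$-H\"older $\mf v$. The second case is when some components blow up at $0$; there the $k$-wise term either forces the remaining components to vanish faster, or else shows they are locally harmonic with vanishing forcing, and we reduce to the first case. In the first case, local $H^1$ convergence follows from Caccioppoli estimates together with the subsolution inequality $-\Delta v_{i,n}\le \text{forcing}$.

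The limit is then classified. If $M_n$ stays bounded, the limit solves a system with a bounded interaction and no forcing, and is globally $\alpha$-H\"older. If $M_n\to\infty$, the limit is $k$-segregated ($v_J\equiv0$ for every $|J|=k$), each $v_i$ is harmonic on $\{v_i>0\}$, and minimality passes to the limit, since one can use competitors glued via cutoffs, as in the $\Gamma$-convergence for segregation problems. This yields minimality of $\mf v$ for the Dirichlet energy under the $k$-segregation constraint on every ball. We expect this step, transferring minimality to the limit in the case $M_n\to\infty$, to be the main obstacle: the interaction term must be controlled in the competitor $\mf v_n+\text{perturbation}$, which requires truncating perturbations to keep $k$-segregation up to errors whose $M_n$-weighted contribution is small.

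Given minimality, we prove a Liouville theorem: a nonconstant globally $\alpha$-H\"older minimiser of the $k$-segregated problem in $\R^N$, or a solution of the bounded-interaction system, cannot exist for $\alpha<\bar\nu$. For the segregated case we would use an Alt--Caffarelli--Friedman/Almgren-type monotonicity formula adapted to $k$ components. At any point, a competitor that replaces one component by its harmonic extension is admissible on the set where the other $k-1$ components overlap. Comparing energies in $B_r$ via a minimality-based Almgren frequency, together with an optimal partition eigenvalue bound on $\S^{N-1}$ with $k$-wise overlapping supports, yields growth at least $r^{\bar\nu}$, where $\bar\nu$ is the exponent associated with the first eigenvalue of that spherical problem (as in \eqref{eq:bar nu}). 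This contradicts the $\alpha$-growth bound, and hence gives the contradiction proving (i).

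For (ii), uniform $L^\infty$ bounds and \eqref{eq:growth assumption strong} make the right-hand side of \eqref{eq:interior estimate thm} uniform, and Ascoli gives $C^{0,\alpha}_{\loc}$ convergence. Testing the equation with $u_{i,\beta}\eta$ bounds $\beta\int\eta\, u_J^2$ and the local $H^1$ norm, giving weak convergence. Strong $H^1_{\loc}$ convergence and $\beta\int_K\sum_J\gamma_J u_{J,\beta}^2\to0$ come from comparing energies: by minimality, $J_\beta(\mf u_\beta)\le J_\beta(\text{competitor})$, where the competitor interpolates, on a thin annulus, between $\mf u_\beta$ and $\tilde{\mf u}$. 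The limit $\tilde{\mf u}$ is $k$-segregated, so it has zero interaction, and on the annulus the uniform convergence makes the interaction cost vanish. Lower semicontinuity then forces equality of the gradient norms and the vanishing of the interaction energy.
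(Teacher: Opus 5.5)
\textbf{Main gap: components that diverge at the origin.} You handle this case in one sentence (``the $k$-wise term either forces the remaining components to vanish faster, or else shows they are locally harmonic \dots{} and we reduce to the first case''). That sentence does not describe what actually happens. There is also no reduction to the bounded case, because the diverging components stay unbounded and the limit problem changes.

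Let $I_\infty:=\{i: v_{i,n}(0)\to+\infty\}$. Since $\mf v_n$ has locally bounded oscillation, the diverging components act as constant coefficients. Each bounded component therefore sees an interaction of order $\ell:=k-|I_\infty|$, with coefficient comparable to $M_n\prod_{j\in I_\infty}v_{j,n}^2(0)$. Take $1\le |I_\infty|\le k-2$:
\begin{itemize}
\item If this coefficient stays bounded, the bounded components converge to an entire system with $\ell$-wise interaction.
\item If it diverges, they become $\ell$-partially segregated, by an exponential decay estimate; they do not vanish.
\end{itemize}
Either way you need Liouville theorems at order $\ell<k$, with threshold $\alpha_{\ell,N}/\ell$. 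You then need to show $1\notin I_\infty$. This goes through $w_{1,n}=v_{1,n}-v_{1,n}(0)$ and $\Delta w_{1,n}\to\lambda$ in $L^1_{\loc}$, which requires quantitative bounds: $M_nv_{i,n}(0)\int_{B_r}\sum_J\gamma_{J,i}v_{J,n}^2\le C$, and $M_n\prod_{\ell=1}^{k-1}v_{i_\ell,n}^2(0)\le C$ when $k-1$ components diverge. In the segregated subcase you need minimality once more. The cases $|I_\infty|\ge k-1$ need their own arguments; for $|I_\infty|=d$ you only get $\Delta w_1=\lambda\ge0$.

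This is exactly where $\bar\nu=\min_{\ell=2,\dots,k}\alpha_{\ell,N}/\ell$ comes from. Your Liouville step only involves the order-$k$ spherical problem, so it cannot produce $\bar\nu$. Moreover, an order-$k$ ACF-type formula only shows that \emph{one} component is constant, possibly a positive constant. The others are then merely $(k-1)$-segregated, and you must iterate down to $\ell=2$. This induction is needed even when $\{\mf v_n(0)\}$ is bounded and $M_n\to\infty$.

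\textbf{The Liouville sketch for segregated minimizers.} As written it is not coherent. Where some $k-1$ other components are all positive, the chosen component must vanish, so replacing it there by its harmonic extension is \emph{not} admissible. Transferring full minimality to the limit, which you flag as the main obstacle, is also unnecessary. The paper argues as follows:
\begin{enumerate}
\item Subharmonicity, $k$-segregation and the inductive Liouville theorem alone show that at least $d-k+1$ limit components vanish identically.
\item A single competitor at level $n$ then does the work. Multiply those components by a cut-off vanishing in $B_\rho$, and replace $v_{1,n}$ by its harmonic extension in $B_\rho$. Every $k$-tuple contains a cut-off component, so the interaction does not increase.
\item This gives $v_1>0$ on $\R^N$, so $v_1$ is harmonic, globally $\alpha$-H\"older and nonconstant, a contradiction.
\end{enumerate}
The same lemma is what closes the case $1\le|I_\infty|\le k-2$.

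\textbf{Part (ii).} Uniform convergence alone does not make the interaction cost of an annular interpolation between $\mf u_\beta$ and $\tilde{\mf u}$ vanish. The issue is that $\beta\to+\infty$ with no rate on $\|\mf u_\beta-\tilde{\mf u}\|_{L^\infty}$; you would need truncations such as $(u_{j,\beta}-\eps_\beta)^+$. The paper avoids this entirely. It tests the equation with $(u_{i,\beta}-\tilde u_i)\varphi$ and uses $\beta\int\varphi\,u_{i,\beta}\sum_J\gamma_{J,i}u_{J,\beta}^2\le C$. It then deduces the vanishing of the interaction from $\bigcup_{j\in J}\{\tilde u_j=0\}=\Omega$.
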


\begin{remark}
	We stress that the restriction on the H\"older exponent depends only on the order of the interaction $k$, and not on the number of components $d$.
\end{remark}

\medskip

We now establish the global counterpart of the previous theorem for minimizers with prescribed boundary data.

\begin{theorem}[Global uniform H\"older bounds for minimizers with fixed traces]\label{thm: regolarità al bordo}
	Let $\Omega\subset\R^N$ be a bounded domain with $C^1$ boundary, let $\boldsymbol\psi=(\psi_1,\dots,\psi_d)$ be a nonnegative Lipschitz continuous function on $\overline\Omega$ satisfying the partial segregation condition \eqref{PSC}, and let $\{\bs f_\b\}_\b$ be a family of Carathéodory functions satisfying \eqref{eq:growth assumption weak}. For every $\beta>0$, let
	\[
	\mf u_\beta=(u_{1,\beta},\dots,u_{d,\beta})
	\]
	be a nonnegative minimizer of
	\begin{equation}\label{eq:c_beta}
		c_{\beta,\bs f_\b}
		:=
		\inf_{\mf u\in\mathcal U_{\bs\psi}}
		J_{\beta,\bs f_\b}(\mf u,\Omega).
	\end{equation}
	Then:
	\begin{enumerate}[label=(\roman*), ref=(\roman*)]
		\item there exists $\bar\nu\in(0,1)$, defined in \eqref{eq:bar nu} and depending only on the dimension $N$ and the order of the interaction $k$, such that, for every $\alpha\in(0,\bar\nu)$,
		\begin{equation}\label{eq:global estimate}
			\|\mf u_\beta\|_{C^{0,\alpha}(\overline\Omega)}
			\leq
			C\left(
			\|\mf u_\beta\|_{L^\infty(\Omega)}
			+
			\sup\left\{
			|f_{i,\b}(x,u)|:
			x\in\Omega,\
			u\in[0,\sup_\Omega u_{i,\b}],\
			i=1,\dots,d
			\right\}
			\right),
		\end{equation}
		with $C=C(N,\alpha,k,d,\Omega,\bs\psi)>0$ independent of $\beta$;
		
		\item \label{thm point 2} if, in addition, $\{\bs f_\b\}_\b$ satisfies \eqref{eq:growth assumption strong} with $p<2^*$ and $\{\mf u_\b\}_\b$ is uniformly bounded in $H^1(\Omega,\R^d)$, then, for every $\alpha\in(0,\bar\nu)$, the family $\{\mf u_\b\}_\b$ is uniformly bounded in $C^{0,\alpha}(\overline\Omega,\R^d)$. Moreover, there exists $\tilde{\mf u}\in H^1(\Omega,\R^d)$, with
		\[
		\tilde{\mf u}-\bs\psi\in H_0^1(\Omega,\R^d),
		\]
		such that, up to a subsequence,
		\[
		\mf u_\beta\to\tilde{\mf u}
		\quad\text{in }H^1_{\loc}(\Omega,\R^d)
		\text{ and in }C^{0,\alpha}(\overline\Omega,\R^d)
		\quad\text{for every }\alpha\in(0,\bar\nu),
		\]
		and
		\[
		\beta\int_K
		\sum_{\substack{J\subseteq[d]\\ |J|=k}}
		\gamma_Ju_{J,\beta}^2\,dx
		\to0
		\qquad\text{for every compact }K\subset\Omega.
		\]
	\end{enumerate}
\end{theorem}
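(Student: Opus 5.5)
We will derive part (i) of Theorem~\ref{thm: regolarità al bordo} from its interior counterpart, Theorem~\ref{thm: regolarità interna}, via a blow-up argument near the boundary. The plan is to argue by contradiction, as in the Noris--Tavares--Terracini--Verzini scheme. Suppose that for some $\alpha<\bar\nu$ there are $\beta_n$ and minimizers $\mf u_n$ such that, after normalizing by the right-hand side of \eqref{eq:global estimate} (which linearly rescales the equation and the nonlinearity), $\|\mf u_n\|_{L^\infty}+\sup|f|\le 1$ while $L_n:=\max_i[u_{i,n}]_{C^{0,\alpha}(\overline\Omega)}\to\infty$.

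Let $x_n,y_n$ be points nearly realizing $L_n$ and set $r_n=|x_n-y_n|\to0$; this holds because $L^\infty$ is bounded. If $\dist(x_n,\partial\Omega)/r_n\to\infty$, we fall into the interior regime. In that case we blow up around $x_n$ with the rescaling
\[
v_n(x)=L_n^{-1}r_n^{-\alpha}\,\mf u_n(x_n+r_nx),
\]
which produces functions with $[v_n]_{C^{0,\alpha}}=1$, oscillation of order one on the unit scale, and rescaled competition parameter $M_n=\beta_n L_n^{2(k-1)}r_n^{2+2\alpha(k-1)}$. These functions are local minimizers of a rescaled functional whose nonlinear term is $O(r_n^{2-\alpha}/L_n)\to0$. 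Minimality is preserved under the rescaling, so the interior Liouville-type and blow-up analysis from the proof of Theorem~\ref{thm: regolarità interna} applies and yields a contradiction. We will reuse that step verbatim rather than re-prove it; the threshold $\bar\nu$ enters precisely through the Liouville theorem for $k$-segregated limits (or for entire solutions of the limiting system when $M_n$ stays bounded).

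The new case is the one where $\dist(x_n,\partial\Omega)\le Cr_n$. We flatten the boundary locally using the $C^1$ regularity of $\partial\Omega$ and rescale in the same way. The boundary datum becomes
\[
L_n^{-1}r_n^{-\alpha}\,\bs\psi(x_n+r_n\cdot),
\]
whose oscillation on bounded sets is $O(r_n^{1-\alpha}/L_n)\to0$, since $\bs\psi$ is Lipschitz. After subtracting the constant value $L_n^{-1}r_n^{-\alpha}\psi(x_n)$ from each component, the blow-up limit $\mf v$ is therefore defined on a half-space $\{x_N>-a\}$ and satisfies $\mf v=\text{const}$ on the hyperplane. More precisely, the limit boundary values must be zero for every component whose rescaled datum stays bounded. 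Since $v_n$ are uniformly $\alpha$-Hölder, Ascoli compactness gives local uniform convergence up to the boundary.

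Next we show that $\mf v$ is nonconstant, with $[\mf v]_{C^{0,\alpha}}\le1$, and that it is either an entire solution of the limit system with constant $M$ or a $k$-segregated configuration. In either case it is harmonic where fewer than $k$ components vanish and subharmonic overall, and it vanishes (or is constant) on the flat boundary. We will then invoke the half-space version of the Liouville theorem: an $\alpha$-Hölder configuration with $\alpha<\bar\nu\le1$ in a half-space, with zero boundary data, must be trivial. We plan to prove this by odd reflection combined with the Alt--Caffarelli--Friedman/Almgren-type monotonicity used in the interior case; for a single harmonic component, the half-space growth bound already forces linear growth, contradicting $\alpha<1$. We expect this boundary Liouville step to be the main obstacle. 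In particular, it requires checking that the interior monotonicity formula survives reflection across the flat boundary, and handling components whose rescaled boundary values diverge, which can be dealt with by subtracting the constants as above.

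Part (ii) is then routine. First we establish uniform $L^\infty$ bounds: (F') with $p<2^*$ and the $H^1$ bound give $L^\infty$ via Brezis--Kato/Moser applied to the subsolution inequality $-\Delta u_i\le C(1+u_i^{p-1})$ with Lipschitz boundary data. Then (i) yields uniform $C^{0,\alpha}(\overline\Omega)$ bounds, and Ascoli gives convergence in $C^{0,\alpha'}$ for all $\alpha'<\bar\nu$. Finally, the $H^1_{\loc}$ convergence, the vanishing of $\beta\int_K\sum_J\gamma_Ju_{J,\beta}^2$, and the trace condition follow from Theorem~\ref{thm: regolarità interna}(ii) together with the weak closedness of $\bs\psi+H^1_0$.
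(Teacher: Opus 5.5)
\textbf{Verdict: genuine gap in the boundary case of part (i).} Your interior reduction and your part (ii) match the paper. Part (ii) runs Brezis--Kato/Stampacchia for $L^\infty$ bounds, then applies (i) and Ascoli, and takes the local statements from the interior theorem. The problem is the boundary regime $\dist(x_n,\partial\Omega)\le Cr_n$.

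\textbf{First issue: the boundary values and the use of \eqref{PSC}.} Your claim that every component with bounded rescaled datum has zero limit boundary value is false. The quantity $L_nr_n^\alpha=|u_{1,n}(x_n)-u_{1,n}(y_n)|$ may stay bounded away from $0$. In that case the limit boundary constant of $v_i$ is $\psi_i(\bar x)/\lim_n L_nr_n^\alpha$, with $\bar x=\lim x_n\in\partial\Omega$, and this can be positive.

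What you are missing is the use of \eqref{PSC} for $\bs\psi$ inside the blow-up:
\begin{enumerate}
\item At a point $z_n\in\partial\Omega_n\cap B_R$, at least $d-k+1$ components of $\mf v_n$ vanish. Up to a subsequence these are fixed indices.
\item The normalized H\"older bound makes these components bounded at $0$, and their limits vanish on $\partial\Omega_\infty$.
\item Extend such a $v_i$ by zero. The result is a nonnegative, subharmonic, globally $\alpha$-H\"older function on $\R^N$, and it is segregated from its own even reflection.
\item Proposition~\ref{prop: Liou sys 2}, which holds for every $\alpha<1$, then forces $v_i\equiv0$.
\end{enumerate}
This is a single-component argument, so no $k$-component monotonicity formula across the boundary is needed. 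Its consequence is that every $k$-tuple contains an identically vanishing component. Hence all interaction integrals tend to $0$ in $L^1_{\loc}(\Omega_\infty)$, each bounded component is harmonic in its positivity set, and for diverging components $\Delta w_{i,n}\to0$ in $L^1_{\loc}$.

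By contrast, subtracting constants from all components, as you propose, destroys the product structure of the competition term. So the dichotomy ``entire solution with constant $M$ or $k$-segregated configuration'' does not follow for the shifted limits.

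\textbf{Second issue: the Liouville step and minimality.} Odd reflection does not preserve nonnegativity or subharmonicity, and the ACF-type formulas require both. More seriously, for the components not killed by \eqref{PSC}, the boundary constant $c$ may be positive. No Liouville theorem holds for functions that are nonnegative, subharmonic, harmonic in their positivity set, constant on the hyperplane and globally $\alpha$-H\"older. Indeed, $(c-x_N)^+$ on $\{x_N>0\}$ has all these properties and is nonconstant.

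The paper excludes this configuration using minimality. Once $d-k+1$ components vanish, Lemma~\ref{lemma: minimality implies positivity} applies on large balls near the boundary. Its competitor replaces $v_{i,n}$ by its harmonic extension inside a ball and cuts off the vanishing components. The lemma gives $v_i>0$ in $\Omega_\infty$. Then $v_i$ (or $w_1$, when $1\in I_\infty$) is harmonic, constant on $\partial\Omega_\infty$ and globally $\alpha$-H\"older, hence constant by Lemma~\ref{lem: liou half}.

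Your boundary argument never invokes minimality. As written, it would fail on exactly the configuration $(c-x_N)^+$ above.
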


Some remarks concerning the first two results are in order.

\begin{remark}\label{rmk:bounds follow under A1-A2}
	The uniform $H^1$ boundedness assumed in Theorem \ref{thm: regolarità al bordo} \ref{thm point 2} follows automatically under additional growth restrictions on the forcing term. In particular, it is guaranteed under assumption (A2) stated in Subsection \ref{sub:esistenza minimi e regolarità}. Indeed, by minimality and \eqref{eq:growth assumption strong}, taking $\bs\psi$ as a competitor for $J_{\b,\bs f_\b}$, we obtain
	\[
	J_{\b,\bs f_\b}(\mf u_\b,\Omega)
	\leq J_{\b,\bs f_\b}(\bs\psi,\Omega)
	=
	\sum_{i=1}^d\int_\Omega
	\left(
	\frac12|\nabla\psi_i|^2
	-F_{i,\b}(x,\psi_i(x))
	\right)\,dx
	\leq C,
	\]
	for some $C>0$ depending on $\bs\psi$, but independent of $\b$. On the other hand, as we shall show in the proof of Proposition \ref{prop:minim at fixed trace exist},
	\[
	J_{\b,\bs f_\b}(\mf u_\b,\Omega)
	\geq
	\frac{\eps}{4}\|\mf u_\b\|_{H^1(\Omega)}^2
	-C'\|\mf u_\b\|_{H^1(\Omega)}
	-C'',
	\]
	for some constants $C',C''>0$ independent of $\b$. Combining the two inequalities yields the uniform $H^1(\Omega)$ boundedness of the family $\{\mf u_\b\}_{\b>0}$.
\end{remark}

\begin{remark}
	Without additional assumptions, it is not clear whether $\mf u_\beta\to\tilde{\mf u}$ in $H^1(\Omega)$, since one cannot control the behaviour of $\nabla\mf u_\beta$ near the boundary of $\Omega$. However, in the particular case of homogeneous Dirichlet boundary conditions, i.e. $\bs\psi=0$, one can prove $H^1(\Omega)$ convergence on the whole domain and the vanishing of the interaction integral on the whole $\Omega$ as $\beta\to+\infty$; see \cite{NoTaTeVe} for a similar argument.
\end{remark}

\begin{remark}
	Theorems \ref{thm: regolarità interna} and \ref{thm: regolarità al bordo} are quite general; however, they rely on the a priori existence of a family of minimizers for $J_{\b,\bs f_\b}$ in the class of functions with prescribed trace. As anticipated earlier, the existence of such minimizers is not guaranteed for an arbitrary family of Carathéodory functions with subcritical growth. We will address the issue of existence in Subsection \ref{sub:esistenza minimi e regolarità}.
	
	We also remark that, in the case $\bs f_\b=0$, minimizers exist by lower semicontinuity of the gradient norm and Fatou's lemma. In particular, when $k=d=3$, for a family of minimizers uniformly bounded in $L^\infty$, we recover \cite[Theorems 1.1 and 1.2]{SoTe1}.
\end{remark}

\begin{remark}
	The threshold $\bar\nu$ is defined in terms of an optimal ``overlapping partition problem'' on the sphere. More precisely,
	\begin{equation}\label{eq:bar nu}
		\bar\nu
		=
		\min_{\ell=2,\dots,k}\frac{\alpha_{\ell,N}}{\ell},
	\end{equation}
	where $\alpha_{\ell,N}$ is defined in \eqref{eq:alpha ell,N}. In Section \ref{sec:preliminar}, we will recall that $\alpha_{\ell,N}\leq2$ for every $\ell\geq2$, so that Theorem \ref{thm: regolarità interna} provides uniform H\"older bounds for every exponent smaller than $\bar\nu\leq2/k$. We do not expect this regularity to be optimal. In fact, when $k=d=3$ in the homogeneous case, the optimal regularity was determined in \cite{SoTe2} and is $C^{0,3/4}$, which is better than the $C^{0,2/3}$ regularity given by the present theorem. We refer to Subsection \ref{sub:further persp} for a discussion of this aspect.
\end{remark}

Finally, assuming a suitable convergence of the nonlinear terms as $\b\to+\infty$, we can characterize the limit profile as a minimizer of a natural limit problem and, in particular, improve the local $H^1$ convergence given by Theorem \ref{thm: regolarità al bordo} to global $H^1$ convergence.

To this aim, let $\Omega\subset\R^N$ be a bounded domain with $C^1$ boundary, and let $\boldsymbol\psi=(\psi_1,\dots,\psi_d)$ be a nonnegative Lipschitz continuous function on $\overline\Omega$ satisfying the partial segregation condition \eqref{PSC}. We introduce the set of partially segregated functions
\[
\mathcal \cH_{\bs\psi}
:=
\left\{
\mf u\in\mathcal U_{\bs\psi}:
\prod_{\ell=1}^k u_{j_\ell}\equiv0
\text{ in }\Omega
\text{ for every }J=\{j_1,\dots,j_k\}\subseteq[d]
\right\},
\]
and, for every given Carathéodory function
$\bs f=(f_1,\dots,f_d)$, the functional
\[
J_{\infty,\bs f}(\mf u)
:=
\sum_{i=1}^d\int_\Omega
\left(
\frac12|\nabla u_i|^2
-F_i(x,u_i(x))
\right)\,dx,
\qquad
\mf u\in\mathcal \cH_{\bs\psi},
\]
where
\[
F_i(x,u):=\int_0^u f_i(x,s)\,ds.
\]
We set
\begin{equation}\label{eq:c infty}
	c_{\infty,\bs f}
	:=
	\inf_{\mf u\in\mathcal \cH_{\bs\psi}}
	J_{\infty,\bs f}(\mf u).
\end{equation}

\begin{theorem}[Characterization of the limit problem]\label{thm:limit problem}
	Under the same hypotheses of Theorem \ref{thm: regolarità al bordo} \ref{thm point 2}, let $\{\b_n\}_n$ be a sequence such that $\b_n\to+\infty$ as $n\to\infty$. Assume moreover that there exists a Carathéodory function
	\[
	\bs f_\infty=(f_{1,\infty},\dots,f_{d,\infty})
	\]
	such that
	\begin{equation}\label{eq:convergence pointwise f_n to f}
		f_{i,\b_n}(x,s)
		\to f_{i,\infty}(x,s)
		\quad
		\text{for a.e. }x\in\Omega,\ \forall s\in[0,+\infty),
		\qquad i=1,\dots,d.
	\end{equation}
	
	Then, up to a subsequence,
	\[
	\mf u_{\b_n}\to\tilde{\mf u}
	\quad\text{in }H^1(\Omega,\R^d)
	\text{ and in }C^{0,\alpha}(\overline\Omega,\R^d)
	\quad\text{for every }\alpha\in(0,\bar\nu),
	\]
	and
	\[
	\b_n\int_\Omega
	\sum_{\substack{J\subseteq[d]\\ |J|=k}}
	\gamma_Ju_{J,\b_n}^2\,dx
	\to0
	\qquad\text{as }n\to\infty.
	\]
	Furthermore, $\tilde{\mf u}$ is a minimizer of problem \eqref{eq:c infty} with $\bs f=\bs f_\infty$, namely
	\[
	J_{\infty,\bs f_\infty}(\tilde{\mf u})
	=
	c_{\infty,\bs f_\infty},
	\]
	and the corresponding energy levels converge:
	\[
	c_{\b_n,\bs f_{\b_n}}
	\to c_{\infty,\bs f_\infty}
	\qquad\text{as }n\to\infty.
	\]
\end{theorem}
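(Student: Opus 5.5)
We argue by a $\Gamma$-convergence type comparison of energy levels, using the compactness already provided by Theorem \ref{thm: regolarità al bordo} \ref{thm point 2}. Up to a subsequence, $\mf u_{\b_n}\to\tilde{\mf u}$ in $C^{0,\alpha}(\overline\Omega)$ for every $\alpha<\bar\nu$ and in $H^1_{\loc}$. Since the family is bounded in $H^1$, we also have $\mf u_{\b_n}\weak\tilde{\mf u}$ weakly in $H^1(\Omega)$ and $\tilde{\mf u}\in\mathcal U_{\bs\psi}$.

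\textbf{Step 1: the limit is admissible.} Minimality against the competitor $\bs\psi$ gives $J_{\b_n}(\mf u_{\b_n})\le C$. Together with the $H^1$ bound and \eqref{eq:growth assumption strong}, which bounds $\int F_{i,\b_n}(x,u_{i,\b_n})$ via Sobolev embedding, this yields $\b_n\int_\Omega\sum_J\gamma_J u_{J,\b_n}^2\le C$. Hence $\int_\Omega u_{J,\b_n}^2\to0$. By uniform convergence, $\tilde u_J\equiv0$ for every $|J|=k$, so $\tilde{\mf u}\in\cH_{\bs\psi}$.

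\textbf{Step 2: convergence of the potential terms.} We show that $\int_\Omega F_{i,\b_n}(x,u_{i,\b_n})\to\int_\Omega F_{i,\infty}(x,\tilde u_i)$, and likewise that $\int F_{i,\b_n}(x,v_i)\to\int F_{i,\infty}(x,v_i)$ for any fixed $\mf v\in\cH_{\bs\psi}$.

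\emph{Pointwise limit.} The functions $u_{i,\b_n}$ are uniformly bounded, say by $M$, and the $f_{i,\b_n}$ are uniformly bounded on $\Omega\times[0,M]$ by \eqref{eq:growth assumption strong}. Dominated convergence in $s$ therefore gives $F_{i,\b_n}(x,s)\to F_{i,\infty}(x,s)$ locally uniformly in $s$. Indeed, the maps $s\mapsto F_{i,\b_n}(x,s)$ are equi-Lipschitz on $[0,M]$, so pointwise convergence upgrades to uniform convergence there. It follows that $F_{i,\b_n}(x,u_{i,\b_n}(x))\to F_{i,\infty}(x,\tilde u_i(x))$ a.e.

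\emph{Domination.} For the sequence $\mf u_{\b_n}$ we dominate by a constant. For a fixed $\mf v\in H^1$ we dominate by $C(|v|+|v|^p)$, which is integrable when $p\le2^*$.

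\textbf{Step 3: comparison of levels and strong convergence.} For any $\mf v\in\cH_{\bs\psi}$, the interaction term vanishes, so $J_{\b_n,\bs f_{\b_n}}(\mf v)=J_{\infty,\bs f_{\b_n}}(\mf v)$. By Step 2 this gives
\[
\limsup_n c_{\b_n,\bs f_{\b_n}}\le J_{\infty,\bs f_\infty}(\mf v),
\qquad\text{hence}\qquad
\limsup_n c_{\b_n}\le c_{\infty,\bs f_\infty}.
\]
In the other direction, weak lower semicontinuity of the Dirichlet integral, nonnegativity of the interaction term and Step 2 give
\[
c_{\infty}\le J_{\infty,\bs f_\infty}(\tilde{\mf u})\le\liminf_n\Big(\tfrac12\|\nabla\mf u_{\b_n}\|_2^2-\textstyle\sum_i\int F_{i,\b_n}\Big)\le\liminf_n c_{\b_n}.
\]
All these inequalities are therefore equalities. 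Consequently:
\begin{itemize}
\item $\tilde{\mf u}$ is a minimizer and $c_{\b_n}\to c_\infty$;
\item $\|\nabla\mf u_{\b_n}\|_{L^2}\to\|\nabla\tilde{\mf u}\|_{L^2}$, which together with weak convergence yields strong $H^1(\Omega)$ convergence;
\item the interaction integral over all of $\Omega$, being the gap in the chain, tends to $0$.
\end{itemize}

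The main obstacle I expect is Step 2. The hypothesis \eqref{eq:convergence pointwise f_n to f} only gives pointwise convergence in $s$, and this must be upgraded to the convergence of $F_{i,\b_n}(x,u_{i,\b_n}(x))$ along a moving argument. This requires the equicontinuity argument above together with the uniform $L^\infty$ bound coming from the $C^{0,\alpha}(\overline\Omega)$ bounds. For the competitor $\mf v$, which may be unbounded when $p=2^*$, I would instead apply dominated convergence directly at fixed $s=v_i(x)$, where pointwise convergence suffices. Alternatively, one could restrict to bounded competitors by truncation: truncation preserves $\cH_{\bs\psi}$, since $\bs\psi$ is bounded, and it is density-compatible in energy.
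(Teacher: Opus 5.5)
Your proposal is correct and follows essentially the same route as the paper: an upper bound $\limsup_n c_{\b_n,\bs f_{\b_n}}\le c_{\infty,\bs f_\infty}$ obtained by testing with partially segregated competitors (whose interaction term vanishes), a lower bound from weak lower semicontinuity, nonnegativity of the interaction and convergence of the potential terms, and then strong $H^1$ convergence, vanishing interaction, minimality and convergence of levels read off from the equalities in the resulting chain. The only differences are minor: you derive $\tilde{\mf u}\in\cH_{\bs\psi}$ directly from the global energy bound instead of quoting Theorem \ref{thm: regolarità al bordo}, and you spell out the equi-Lipschitz argument behind the dominated convergence step, which the paper leaves implicit.
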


\subsection{Existence of minimizers and applications to partial segregation problems}\label{sub:esistenza minimi e regolarità}

The results stated in the previous subsection rely on the existence of minimizers, an issue that has not yet been addressed. We now introduce a set of assumptions ensuring the solvability of the minimization problems considered above for a rather large class of nonlinearities. As a further consequence, we will later derive some basic regularity properties of minimizers of the limit problem under the partial segregation constraint.

Recall that, given a Carathéodory function and a bounded domain $\Omega$,
\[
f:\Omega\times\R\to\R
\]
satisfying the growth condition
\begin{equation}\label{eq:growth condition single function}
	|f(x,s)|
	\leq C(1+|s|^{p-1})
	\quad\text{for a.e. }x\in\Omega,\ \forall s\in\R,
	\qquad C>0,\quad p\leq2^*,
\end{equation}
we define
\[
F(x,u):=\int_0^u f(x,s)\,ds.
\]
We introduce the following assumptions:
\begin{itemize}
	\item[(A1)] 
	$F(x,u)\geq F(x,-u)
	\quad\text{for a.e. }x\in\Omega,\ \forall u\geq0.$

	\item[(A2)] There exist $b\in L^\infty(\Omega)$ and $C>0$ such that
	\[
	F(x,u)
	\leq \frac{b(x)}{2}u^2+Cu
	\quad\text{for a.e. }x\in\Omega,\ \forall u\in[0,+\infty),
	\]
	and there exists $\eps>0$ such that
	\[
	\int_\Omega
	\left(
	|\nabla v|^2-b(x)v^2
	\right)\,dx
	\geq
	\eps\int_\Omega|\nabla v|^2\,dx
	\qquad
	\forall v\in H_0^1(\Omega).
	\]
\end{itemize}

We say that a family $\{\bs f_\b\}_{\b>0}$ satisfies {\rm (A1)} and {\rm (A2)} uniformly if, for every $i=1,\dots,d$ and every $\b>0$, the corresponding primitive $F_{i,\b}$ satisfies {\rm (A1)} and {\rm (A2)}, with the function $b$, the constant $C$, and the coercivity constant $\eps$ independent of both $i$ and $\b$.

\begin{proposition}\label{prop:minim at fixed trace exist}
	Let $\b>0$. If $f_{i,\b}$ satisfies {\rm (A1)} and {\rm (A2)} for every $i=1,\dots,d$, then the minimum in \eqref{eq:c_beta} is achieved by at least one nonnegative minimizer.
	
	Similarly, if $f_i$ satisfies {\rm (A1)} and {\rm (A2)} for every $i=1,\dots,d$, then the limit problem \eqref{eq:c infty} admits at least one nonnegative minimizer.
\end{proposition}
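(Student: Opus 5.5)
We will use the direct method of the calculus of variations, treating the two problems in parallel. The plan has three steps: reduce to nonnegative competitors, prove coercivity, and prove weak lower semicontinuity together with weak closedness of the admissible class.

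\emph{Reduction to nonnegative competitors.} Given $\mf u\in\mathcal U_{\bs\psi}$ (or $\mathcal H_{\bs\psi}$), we replace it by $|\mf u|=(|u_1|,\dots,|u_d|)$. Since $\bs\psi\geq0$, we have $|u_i|-\psi_i\in H^1_0$, so $|\mf u|$ has the same trace. The Dirichlet energy does not change, and neither does the interaction term $u_J^2$. The segregation constraint $\prod u_{j_\ell}\equiv0$ is also preserved. Assumption (A1) gives $F_{i,\b}(x,|u_i|)\geq F_{i,\b}(x,u_i)$ pointwise. Hence $J(|\mf u|)\leq J(\mf u)$, and it suffices to minimize over nonnegative functions. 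This also shows that the infimum is finite from above: $\bs\psi$ is admissible because of \eqref{PSC}, and it has finite energy by \eqref{eq:growth condition single function}.

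\emph{Coercivity (main step).} For nonnegative $\mf u$ we write $u_i=v_i+\psi_i$ with $v_i\in H^1_0(\Omega)$, and we drop the nonnegative interaction term. By (A2),
\[
J(\mf u)\geq\sum_i\int_\Omega\Big(\tfrac12|\nabla u_i|^2-\tfrac{b}{2}u_i^2-Cu_i\Big)\,dx .
\]
Expanding in $v_i$, the quadratic part in $v_i$ is bounded below by $\frac{\eps}{2}\int|\nabla v_i|^2$. The cross terms $\int\nabla v_i\cdot\nabla\psi_i$ and $\int b\,v_i\psi_i$, as well as the linear term $C\int u_i$, are bounded by $C'\|v_i\|_{H^1}+C''$. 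Here we use Poincaré's inequality for $v_i$, the fact that $b\in L^\infty$, and the boundedness of $\Omega$. Since $\|u_i\|_{H^1}\leq\|v_i\|_{H^1}+\|\psi_i\|_{H^1}$, we obtain
\[
J(\mf u)\geq\frac{\eps}{4}\|\mf u\|_{H^1(\Omega)}^2-C'\|\mf u\|_{H^1(\Omega)}-C''.
\]
Consequently, $c>-\infty$ and any minimizing sequence of nonnegative functions is bounded in $H^1$. The only delicate point is absorbing the cross terms, and this is handled by Young's inequality.

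\emph{Lower semicontinuity and closedness.} Take a nonnegative minimizing sequence $\mf u_n$. Up to a subsequence, $\mf u_n\rightharpoonup\mf u$ weakly in $H^1$, strongly in $L^q$ for $q<2^*$, and a.e. in $\Omega$. The set $\mathcal U_{\bs\psi}$ is weakly closed, because $H^1_0$ is. Nonnegativity passes to the a.e. limit, and so does the constraint $\prod u_{j_\ell}=0$, which gives weak closedness of $\mathcal H_{\bs\psi}$.

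The gradient term is weakly lower semicontinuous. For the interaction term in the $\b$-problem, Fatou's lemma applies since $u_{J,n}^2\to u_J^2$ a.e. and the integrand is nonnegative.

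For the potential term, if $p<2^*$ then $F(x,u_n)\to F(x,u)$ in $L^1$ by strong $L^p$ convergence and the continuity of Nemytskii operators. In the critical case $p=2^*$ this argument fails. Instead, we use only the upper bound $F\leq\frac b2u^2+Cu$ from (A2). We apply Fatou's lemma to $\frac b2u_n^2+Cu_n-F(x,u_n)\geq0$, noting that $\frac b2u_n^2+Cu_n$ converges strongly in $L^1$. This yields $\limsup\int F(x,u_n)\leq\int F(x,u)$, which is exactly the direction needed.

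Combining these, $J(\mf u)\leq\liminf J(\mf u_n)=c$, so $\mf u$ is a nonnegative minimizer. The same argument, with the interaction term omitted, applies to the limit problem \eqref{eq:c infty}.
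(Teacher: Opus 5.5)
Your proposal is correct and follows essentially the same direct-method argument as the paper: replace a minimizing sequence by $(|u_{1,n}|,\dots,|u_{d,n}|)$ using (A1), get coercivity from (A2) plus Poincar\'e's inequality on $\mf u-\bs\psi\in H^1_0$, and get lower semicontinuity from the weak lower semicontinuity of the Dirichlet term, Fatou's lemma for the interaction term and for the nonnegative quantity $-F(x,u_n)+\frac{b}{2}u_n^2+Cu_n$, and closedness of the segregation constraint under a.e.\ convergence. Your explicit splitting $u_i=v_i+\psi_i$ is slightly more careful than the paper's one-line coercivity estimate, since the coercivity inequality in (A2) is only assumed on $H^1_0$; your separate Nemytskii argument for $p<2^*$ is harmless but unnecessary, because the Fatou argument already covers every $p\le 2^*$.
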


Since conditions {\rm (A1)} and {\rm (A2)} may appear somewhat restrictive at first sight, we provide some examples and observations.

\begin{enumerate}[label=(\roman*), ref=(\roman*)]
	\item Suppose that the family of primitives $\{\bs F_\b\}_\b$ is uniformly bounded from above. Since adding to each $F_{i,\b}$ a constant independent of $u$ only changes the corresponding energy by an additive constant, without affecting its minimizers, one may normalize the primitives so that
	\[
	F_{i,\b}(x,u)\leq0.
	\]
	After this normalization, condition {\rm (A2)} is immediately satisfied with $b\equiv 0$ and $C=0$. This observation will be useful in the sequel.
	
	An Allen--Cahn-type potential also fits naturally into this framework. Since we are interested in nonnegative solutions, a meaningful choice is a double-well potential with two nonnegative wells, for instance at $u=0$ and $u=1$. In our notation, one may take
	\[
	F_i(u)=-u^2(1-u)^2,
	\]
	which also satisfies \eqref{eq:growth condition single function} for $N=2,3,4$ (notice that, when $N=4$, $F_i$ has critical growth; nevertheless, the strict subcriticality assumption required in some of the results can be bypassed in this case, as explained in Remark \ref{rmk:subcritical not necessary with uniform L^infty}).
	
	Recall that the functional in \eqref{eq:def J_beta} contains the term $-F_{i,\b}$, so that the corresponding contribution to the energy is the usual nonnegative double-well potential
	\[
	u^2(1-u)^2.
	\]
	
	\item Let $\{\bs f_\b\}_\b$ be a family such that, for every $i$ and $\b$, the map $s\mapsto f_{i,\b}(x,s)$ is odd for a.e. $x\in\Omega$ and Lipschitz continuous, with Lipschitz constants uniformly bounded with respect to $i$ and $\b$. Assume moreover that there exist $\bar s>0$ and $\lambda<\lambda_1(\Omega)$, independent of $i$ and $\b$, such that
	\[
	f_{i,\b}(x,s)\leq\lambda s
	\quad\text{for a.e. }x\in\Omega,\ 
	\forall s\geq\bar s,\ 
	\forall\b>0,\ 
	i=1,\dots,d.
	\]
	Then the family satisfies both {\rm (A1)} and {\rm (A2)}.
	
	\item More generally, oddness in the $s$ variable immediately implies {\rm (A1)}, since in this case the corresponding primitive is even. In particular, whenever a family $\{f_{i,\b}\}_\b$ is initially defined only for nonnegative values of $s$, satisfies $f_{i,\b}(x,0)=0$, and its primitives satisfy {\rm (A2)}, one may extend each $f_{i,\b}$ to the negative half-line by oddness. The resulting primitive is even in $u$, and therefore {\rm (A1)} is automatically satisfied.
\end{enumerate}

\medskip

We now combine the previous results to establish regularity properties for minimizers of the partial segregation problem. By Theorem \ref{thm:limit problem}, a minimizer $\tilde{\mf u}$ of \eqref{eq:c infty} obtained as a strong competition limit belongs to $C^{0,\alpha}(\overline\Omega)$ for every $\alpha\in(0,\bar\nu)$. It is therefore natural to ask whether the same regularity holds for an arbitrary minimizer of \eqref{eq:c infty}. The following result gives a positive answer under the assumptions introduced above.

\begin{corollary}\label{cor: regolarità di tutti i minimi}
	Let $\Omega\subset\R^N$ be a bounded domain with $C^1$ boundary and let $\boldsymbol\psi=(\psi_1,\dots,\psi_d)$ be a nonnegative Lipschitz continuous function on $\overline\Omega$ satisfying the partial segregation condition \eqref{PSC}. Let
	\[
	\bs f=(f_1,\dots,f_d)
	\]
	be a family of Carathéodory functions satisfying \eqref{eq:growth condition single function} with $p<2^*$, as well as {\rm (A1)} and {\rm (A2)}, and let $\tilde{\mf u}$ be a minimizer of \eqref{eq:c infty}.
	
	Then
	\[
	\tilde{\mf u}\in H^1(\Omega,\R^d)
	\cap C^{0,\alpha}(\overline\Omega,\R^d)
	\qquad
	\text{for every }\alpha\in(0,\bar\nu).
	\]
	Moreover, for every $i=1,\dots,d$,
	\begin{equation}\label{eq:equaz in insieme positività}
		-\Delta\tilde u_i
		=
		f_i(x,\tilde u_i)
		\quad\text{in }\{\tilde u_i>0\}.
	\end{equation}
	
	If, in addition, for every $i=1,\dots,d$, $F_i$ is weakly differentiable
	with respect to $x$, $\nabla_x F_i$ is a Carathéodory function, and, for
	every $\Omega'\ssubset\Omega$ and $M>0$, there exists
	$h_{M,\Omega'}\in L^1(\Omega')$ such that
	\[
	|\nabla_x F_i(x,s)|
	\leq h_{M,\Omega'}(x)
	\qquad
	\text{for a.e. }x\in\Omega',\quad |s|\leq M,
	\]
	then $\tilde{\mf u}$ satisfies the following \emph{local Pohozaev identity}:
	\begin{multline}\label{eq:pohozaev}
		\int_{\partial B_r(x_0)}
		\sum_{i=1}^d|\nabla\tilde u_i|^2\,d\sigma
		=
		\frac{N-2}{r}
		\int_{B_r(x_0)}
		\sum_{i=1}^d|\nabla\tilde u_i|^2\,dx
		\\
		-\frac{2}{r}
		\int_{B_r(x_0)}
		\sum_{i=1}^d
		\left(
		NF_i(x,\tilde u_i)
		+\nabla_xF_i(x,\tilde u_i)\cdot(x-x_0)
		\right)\,dx
		\\
		+2
		\int_{\partial B_r(x_0)}
		\sum_{i=1}^d
		(\partial_\nu\tilde u_i)^2\,d\sigma
		+
		2
		\int_{\partial B_r(x_0)}
		\sum_{i=1}^d
		F_i(x,\tilde u_i)\,d\sigma,
	\end{multline}
	for every $B_r(x_0)\ssubset\Omega$.
\end{corollary}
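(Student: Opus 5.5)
The idea is to realize an arbitrary minimizer $\tilde{\mf u}$ of \eqref{eq:c infty} as a strong competition limit, via a penalization that singles it out, and then apply Theorem \ref{thm:limit problem}.

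\textbf{Step 1: penalized problems.} First, by (A1) we may replace $\tilde u_i$ with $|\tilde u_i|$ without increasing the energy, so we may assume $\tilde{\mf u}\geq 0$. We then define the modified nonlinearities
\[
f_{i,\b}(x,s):=f_i(x,s)-(s-\tilde u_i(x)),\qquad
F_{i,\b}(x,u)=F_i(x,u)-\tfrac12(u-\tilde u_i(x))^2+\tfrac12\tilde u_i(x)^2 .
\]
These are independent of $\b$, so \eqref{eq:convergence pointwise f_n to f} holds trivially with
\[
f_{i,\infty}=f_i-(s-\tilde u_i).
\]
We then check the hypotheses.
\begin{itemize}
\item Growth \eqref{eq:growth assumption strong} with $p<2^*$ requires $\tilde u_i\in L^\infty$. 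If this is not available a priori, I would truncate, using $\min(\tilde u_i,M)$ in the penalty; otherwise I would argue via a Brezis--Kato bound on subsolutions.
\item (A1): we have $F_{i,\b}(x,u)-F_{i,\b}(x,-u)=F_i(x,u)-F_i(x,-u)+2u\tilde u_i\geq0$ for $u\geq0$.
\item (A2): the quadratic term $-\frac12 u^2$ only improves coercivity, and the linear term $u\tilde u_i$ is absorbed into $Cu$ once $\tilde u_i$ is bounded.
\end{itemize}
Proposition \ref{prop:minim at fixed trace exist} then gives nonnegative minimizers $\mf u_\b$ of $c_{\b,\bs f_\b}$. Remark \ref{rmk:bounds follow under A1-A2} gives uniform $H^1$ bounds, and Theorem \ref{thm:limit problem} gives $\mf u_{\b_n}\to\bar{\mf u}$ in $H^1$ and $C^{0,\alpha}(\overline\Omega)$ for all $\alpha<\bar\nu$, where $\bar{\mf u}$ minimizes
\[
J_{\infty,\bs f}(\mf v)+\tfrac12\|\mf v-\tilde{\mf u}\|_{L^2}^2 \quad\text{over }\cH_{\bs\psi}.
\]

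\textbf{Step 2: identification.} Since $\tilde{\mf u}$ minimizes $J_{\infty,\bs f}$, for every $\mf v\in\cH_{\bs\psi}$ we have
\[
J_{\infty,\bs f}(\mf v)+\tfrac12\|\mf v-\tilde{\mf u}\|^2\geq J_{\infty,\bs f}(\tilde{\mf u}),
\]
with equality only if $\mf v=\tilde{\mf u}$. Hence $\bar{\mf u}=\tilde{\mf u}$, and the regularity $\tilde{\mf u}\in C^{0,\alpha}(\overline\Omega)$ for all $\alpha<\bar\nu$ follows.

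\textbf{Main obstacle: the $L^\infty$ bound on $\tilde{\mf u}$.} I would first try to show that each $\tilde u_i$ is a weak subsolution,
\[
-\Delta\tilde u_i\leq f_i(x,\tilde u_i),
\]
by testing the minimality with decreasing variations $\tilde u_i-t\varphi\,\tilde u_i/(\tilde u_i+\delta)$, $\varphi\geq0$. These preserve the constraint in $\cH_{\bs\psi}$ because they do not enlarge positivity sets. Brezis--Kato and Moser iteration up to the boundary, using $\psi$ Lipschitz, then give boundedness. If this fails, the fallback is to truncate the penalty at level $M$ and note that the identification in Step 2 still works, since the truncated penalty still vanishes only at $\tilde{\mf u}$ when $M\geq0$ is arbitrary; strict convexity is then lost but positivity of the penalty off $\tilde{\mf u}$ suffices.

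\textbf{Equation on positivity sets.} With continuity in hand, $\{\tilde u_i>0\}$ is open. On any ball $B$ in it, continuity gives $\tilde u_i\ge c>0$ on $B$, while the other components remain unchanged. Hence variations $\tilde u_i+t\varphi$ with $\varphi\in C_c^\infty(B)$ and $|t|$ small keep $\tilde u_i>0$ on $B$, do not enlarge positivity sets, and stay in $\cH_{\bs\psi}$. Differentiating in $t$ yields \eqref{eq:equaz in insieme positività}.

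\textbf{Pohozaev identity.} Domain variations preserve the constraint. So I would take $\tilde{\mf u}_t(x)=\tilde{\mf u}(x+t\eta(x))$ with $\eta$ a radial cutoff vector field $(x-x_0)\phi(|x-x_0|)$. The derivative of $J_{\infty,\bs f}(\tilde{\mf u}_t)$ at $t=0$ vanishes, and the $\nabla_x F_i$ bound justifies differentiating the potential term. Letting $\phi\to\chi_{[0,r]}$ gives \eqref{eq:pohozaev} for a.e. $r$, and then for every $r$ by continuity of both sides.
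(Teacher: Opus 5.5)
Your proof is correct, but it takes a different route from the paper's at two points.

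\textbf{The penalization.} The paper uses $\frac12\arctan\bigl((u_i-\tilde u_i)^2\bigr)$. Its $s$-derivative $(s-\tilde u_i)/(1+(s-\tilde u_i)^4)$ is bounded by $1$ whatever $\tilde{\mf u}$ is. So \eqref{eq:growth assumption strong}, (A1) and (A2) hold for the modified nonlinearities without any a priori bound on $\tilde{\mf u}$, and your ``main obstacle'' never arises.

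Your quadratic penalty gives a slightly cleaner identification, but it does need $\tilde{\mf u}\in L^\infty(\Omega)$. Your primary remedy supplies this:
\begin{itemize}
\item the variations $\tilde u_i-t\varphi\,\tilde u_i/(\tilde u_i+\delta)$ with $0<t\leq\delta/\|\varphi\|_\infty$ are admissible in $\cH_{\bs\psi}$;
\item after discarding the nonnegative term $\int\varphi\,\delta|\nabla\tilde u_i|^2(\tilde u_i+\delta)^{-2}$ and letting $\delta\to0$, they give $-\Delta\tilde u_i\leq f_i(x,\tilde u_i)\chi_{\{\tilde u_i>0\}}$;
\item the Brezis--Kato/Stampacchia argument of Lemma \ref{lemma:H^1 to Linfty} then applies verbatim to this single subsolution.
\end{itemize}

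Your fallbacks are weaker. Inserting $\min(\tilde u_i,M)$ into the penalty would break Step 2, because the penalized limit is then attracted to the truncation rather than to $\tilde{\mf u}$. Capping the penalty's values does preserve the identification, but it only works after smoothing, since otherwise the nonlinearity is not continuous in $s$. That smoothed version is essentially the paper's choice.

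\textbf{The equation and the Pohozaev identity.} You work directly on the constrained minimizer. You use outer variations supported in $\{\tilde u_i>0\}$ for the equation, and inner variations for \eqref{eq:pohozaev}; both preserve $k$-segregation. The paper instead passes to the limit in the equations and Pohozaev identities of the penalized minimizers. This requires controlling the interaction terms through \eqref{eq:interazione arg penalizz va a zero} and disposing of the extra term produced by the $x$-dependence of the penalty.

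Your route is more intrinsic. Its delicate point is differentiating $t\mapsto\int_\Omega F_i(x,\tilde u_i(x+t\eta(x)))\,dx$ when $F_i$ is only weakly differentiable in $x$. After a change of variables this needs the bound on $\nabla_xF_i$ with $s=\tilde u_i(y)$ frozen, plus an approximation argument. This is the counterpart of the chain rule for $F_i(x,u_{i,n}(x))$ that the paper uses.

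Finally, your claim that \eqref{eq:pohozaev} extends to every $r$ ``by continuity of both sides'' is not justified. For $\tilde u_i\in H^1$, the surface integral $\int_{\partial B_r}|\nabla\tilde u_i|^2\,d\sigma$ is a priori defined only for a.e.\ $r$. The paper's final differentiation in $r$ likewise yields the identity only for a.e.\ $r$.
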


\begin{remark}
	In the special case $k=d=3$ and $\bs f=\bs0$, the regularity statement above was already mentioned in \cite[Remark 1.8]{SoTe1}; we refer to the next subsection for a more detailed comparison with that work. A complete proof was omitted therein in order to avoid some technical complications, whereas the tools developed here allow us to establish the result directly.
	
	In particular, for future reference, we point out that in the homogeneous case the corollary shows that, for every $3\leq k\leq d$, any minimizer of
	$$
	\min\left\{\sum_{i=1}^d \int_\Omega |\nabla u_i|^2 : \mf u\in H^1(\Omega,\R^d), \mf u-\bs \psi\in H_0^1(\Omega,\R^d), \prod_{\ell=1}^k u_{j_\ell}\equiv 0 \ \text{in $\Omega$ for any $J\subseteq [d]$, $|J|=k$}\right\}
	$$
	belongs to $C^{0,\alpha}(\overline\Omega,\R^d)$ for every $\alpha\in(0,\bar\nu)$.
	
	We also mention that the previous regularity result admits a local counterpart for a larger class of profiles, namely limits of locally minimizing configurations for the Dirichlet energy with a competitive interaction term. We refer to the class $\mathcal L(\Omega)$ introduced in \cite{SoTe1}, whose definition can be naturally extended to the present setting. For the sake of brevity, we restrict here to minimizers of \eqref{eq:c infty}; a more general treatment will be developed in a forthcoming work.
\end{remark}

\begin{remark}\label{rmk:subcritical not necessary with uniform L^infty}
	In the previous results, the strict subcriticality assumption $p<2^*$ is only used to derive
	uniform $L^\infty$ bounds from uniform $H^1$ bounds. Consequently,
	Theorem \ref{thm: regolarità al bordo}~\ref{thm point 2} remains valid
	under the critical growth condition $p\leq 2^*$, provided that the family
	$\{\mf u_\beta\}_\beta$ can be shown to be uniformly bounded in
	$L^\infty(\Omega)$ by other means. The same observation applies to
	Corollary \ref{cor: regolarità di tutti i minimi}: its conclusion remains
	valid in the critical case, provided that the approximating minimizers
	constructed in its proof can be shown to be uniformly bounded in
	$L^\infty(\Omega)$.
	
	As a simple example, consider the Allen--Cahn-type potential
	\[
	F_i(s)=-s^2(1-s)^2,
	\]
	whose associated contribution to the energy is the nonnegative
	double-well potential $-F_i(s)=s^2(1-s)^2$, with wells at $0$ and $1$.
	If the boundary data satisfy
	\[
	0\leq\psi_i\leq1,
	\qquad i=1,\dots,d,
	\]
	then the truncation
	\[
	T(s):=\min\{1,\max\{0,s\}\}
	\]
	applied componentwise does not increase the energy: it decreases the
	Dirichlet term and the double-well potential, and it does not increase
	the nonnegative interaction term. Therefore, the minimizers of the associated functional with the $\beta$-interaction term can be
	chosen to satisfy
	\[
	0\leq u_{i,\beta}\leq1
	\qquad\text{a.e. in }\Omega, \forall \beta>0
	\]
	In this case, the required uniform $L^\infty$
	bounds follow directly from the variational structure, independently
	of any strict subcriticality assumption.
\end{remark}

\subsection{Main novelties}\label{sub:novità}

The present work extends the uniform H\"older estimates obtained in \cite{SoTe1} in several directions, by considering systems with an arbitrary number of components and arbitrary orders of multiple interaction, while also allowing for the presence of nonlinear terms. More precisely, \cite{SoTe1} deals with the homogeneous case of \eqref{eq:general problem better notation}, with $d=k=3$ and $\bs f_\b=\bs 0$, and establishes uniform a priori H\"older bounds for every exponent $\alpha\in(0,\bar\nu)$, where $\bar\nu=\alpha_{3,N}/3\leq 2/3$ and $\alpha_{3,N}$ is defined in \eqref{eq:alpha ell,N}. The proofs of Theorems \ref{thm: regolarità interna} and \ref{thm: regolarità al bordo} are inspired by the aforementioned work and are likewise based on a contradiction argument involving a blow-up analysis. A further generalization is that, unlike in \cite{SoTe1}, where the family of solutions is assumed to be uniformly bounded in $L^\infty$, the first parts of Theorems \ref{thm: regolarità interna} and \ref{thm: regolarità al bordo} provide explicit estimates of the $C^{0,\alpha}$ norm in terms of the $L^\infty$ norm and of the nonlinear forcing terms, without any a priori uniform boundedness assumption on the family.

The presence of the nonlinear forcing terms introduces additional technical difficulties that are absent in the homogeneous setting. First, some care is required already at the variational level, since suitable assumptions on the nonlinearities are needed to ensure that the associated energy functionals admit minimizers. Moreover, in order to apply the uniform H\"older estimates to such minimizers, one needs a priori bounds that are uniform with respect to the competition parameter $\beta$. While uniform $H^1$ bounds can be obtained under appropriate coercivity assumptions, deriving uniform $L^\infty$ bounds requires additional arguments. To this end, we develop an iterative procedure combining Brezis--Kato-type estimates with a Stampacchia-type truncation argument.

The higher-order nature of the interaction introduces further substantial difficulties in the blow-up analysis. In particular, as both the number of components $d$ and the order of the interaction $k$ increase, the number of possible blow-up scenarios grows rapidly, making it no longer feasible to enumerate all possible configurations and treat them separately. Moreover, new Liouville-type theorems are needed to deal with the different limiting problems arising in the blow-up analysis. A key ingredient in overcoming these difficulties is an inductive strategy that allows us to avoid an exhaustive classification of all possible blow-up configurations. More precisely, we partition the components into subsets according to their common asymptotic behavior and, relying on the new rigidity results established in this paper, derive a contradiction for every possible cardinality of these subsets.

\subsection{Previous works on partial segregation phenomena}\label{sub:previous works}

We first recall that the case of binary interactions, namely $d\geq k=2$, which leads in the limit to total segregation phenomena, has been extensively studied over the last decades; see, among others, \cite{Conti_Terracini_Verzini_2005,CaLi08,NoTaTeVe}. In this setting, uniform a priori bounds can be established in $C^{0,\alpha}$ for every $\alpha\in(0,1)$, while the optimal regularity of the limiting configurations is Lipschitz continuity. We also refer to \cite{CoTeVe,CaLi07,TaTe,Dancer_Wang_Zhang_2012,SoTaTeZi,Quitalo_2013} for finer results concerning this problem. Similar questions have been investigated in more general settings: we refer to \cite{SoaTer22} for a two-phase anisotropic problem, to \cite{Wei_Weth_2008} for a broader class of interaction terms, to \cite{TerVerZil14,TerVerZil15} for systems driven by the fractional Laplacian, and to \cite{Caffarelli_Patrizi_Quitalo_2017,Soave_Tavares_Terracini_Zilio_2018} for systems with long-range interactions.

\medskip

To the best of our knowledge, \cite{SoTe1} and its companion paper \cite{SoTe2} are among the first works to investigate partial segregation phenomena in a variational setting. In particular, \cite{SoTe2} shows that the optimal regularity for problem \eqref{eq:c infty} in the case $k=d=3$ is $C^{0,3/4}$. We also refer to \cite{BoBuFo,CaRo}, where related questions are addressed for systems with symmetric interactions and the optimal regularity turns out to be Lipschitz continuity. More recently, \cite{Bozorgnia_Arakelyan_2025} studied the singular limit of ternary interactions from the point of view of $\Gamma$-convergence, while \cite{Bozorgnia_2026} compares the variational and symmetric models, showing that, although the corresponding limiting configurations are substantially different, their free boundaries coincide.

\medskip

We also mention \cite{Giaretto_Soave_2026}, where a related problem involving a $k$-wise interaction term is studied. The authors establish the existence and qualitative properties of ground states and least-energy fully non-trivial solutions, namely solutions whose components are all nonzero. In the competitive regime, they obtain uniform $H^1$ bounds for solutions that are not minimizing in the sense considered in the present work, but rather minimize the energy within the class of fully non-trivial radial solutions.

\subsection{Further perspectives}\label{sub:further persp}

The uniform H\"older bounds established in the present work, together with the basic properties of the limiting configurations, represent a first crucial step towards the study of singularly perturbed systems with higher-order interactions, the regularity of minimizers of \eqref{eq:c infty}, and the structure of their free boundaries. We conclude this introduction by mentioning two open problems that deserve further investigation.

\medskip

A first question concerns the optimality of the H\"older threshold $\bar\nu$. In the case $k=d=3$ and $\bs f=\bs 0$, the limit problem reduces to the study of harmonic maps satisfying a ternary partial segregation condition, which was addressed in \cite{SoTe2}. There it is proved that minimizers belong to $C^{0,3/4}$ and that uniform H\"older bounds hold for every $\alpha\in(0,3/4)$, thus improving the a priori bounds for exponents up to $2/3$ established in \cite{SoTe1}. Somewhat surprisingly, the proof of this improved regularity relies on the previously established a priori H\"older bounds, and a direct proof of the optimal estimate is still not available.

In view of this, one may hope to develop a similar strategy in the more general setting considered here and obtain higher, possibly optimal, regularity by exploiting the a priori H\"older bounds established in the present work. The optimal exponent is expected to depend on the order of the interaction $k$ and possibly also on the number of components $d$. Moreover, a detailed description of the free boundary and of the nodal set of minimizers of the limit problem in this general framework is currently in preparation, in the spirit of the results obtained in \cite{TaTe} and \cite{SoTe2} for binary and ternary interactions, respectively.

\medskip

A second important question is whether our results can be extended to \emph{any} $L^\infty$-bounded family of solutions to \eqref{eq:general problem better notation}, without assuming that the solutions minimize the associated energy functional. This is known to hold for systems with binary interactions, since the aforementioned literature deals with general families of solutions. For higher-order interactions, however, the situation is considerably less clear.

Indeed, the blow-up argument developed in the present work relies on the minimality of the solutions at several key points; see Lemmas \ref{lemma: minimality implies positivity}, \ref{lemma: not 1 to k-2 divergent}, and \ref{lemma: not M unb e v bdd} below. At present, we are not able to remove this assumption. To the best of our knowledge, the only result available for families of non-minimal solutions in the context of beyond-pairwise interactions is contained in the aforementioned work \cite{Giaretto_Soave_2026}. There, the solutions are assumed to be minimizing only within the class of fully non-trivial radial functions, and radial symmetry is exploited to characterize the limiting profiles. Moreover, uniform H\"older bounds are not available in that setting, and only uniform $H^1$ bounds are obtained.

\subsection*{Structure of the paper}

In Section \ref{sec:preliminar}, we prove Proposition \ref{prop:minim at fixed trace exist} and collect some preliminary and known results needed throughout the paper. In Section \ref{sec:new liou}, we establish new Liouville-type theorems tailored to our framework. These tools are then used in Section \ref{sec: thm regolarità interna} to prove Theorem \ref{thm: regolarità interna}, and in Section \ref{sec: thm regolarità bordo} to prove Theorem \ref{thm: regolarità al bordo}. Finally, Section \ref{sec:limit problem} is devoted to the proof of Theorem \ref{thm:limit problem} and Corollary \ref{cor: regolarità di tutti i minimi}.

\section{Preliminaries and known results}\label{sec:preliminar}

The aim of this section is to establish some preliminary tools that will be useful in the sequel.

\subsection{Existence of minimizers with fixed trace}

We first prove that assumptions {\rm (A1)} and {\rm (A2)} guarantee the existence of minimizers for the problems \eqref{eq:c_beta} and \eqref{eq:c infty}. The argument is similar to the one used in \cite[Theorem 3.1]{Conti_Terracini_Verzini_2005}.

\begin{proof}[Proof of Proposition \ref{prop:minim at fixed trace exist}]
	We first consider the problem \eqref{eq:c_beta}. By {\rm (A2)}, for every $\mf u\in\mathcal U_{\bs\psi}$ we have
	\begin{align*}
		J_{\b,\bs f_\b}(\mf u,\Omega)
		&\geq
		\frac{1}{2}\sum_{i=1}^d
		\int_{\Omega}
		\left(
		|\nabla u_i|^2-b(x)u_i^2
		\right)\,dx
		-C\sum_{i=1}^d\int_{\Omega}|u_i|\,dx\\
		&\geq
		\frac{\eps}{4}
		\sum_{i=1}^d\int_{\Omega}|\nabla u_i|^2\,dx
		-C'\|\mf u\|_{H^1(\Omega)}\\
		&\geq
		\frac{\eps}{4}\|\mf u\|_{H^1(\Omega)}^2
		-C'\|\mf u\|_{H^1(\Omega)}
		-C'',
	\end{align*}
	where in the last line we used Poincaré's inequality on
	$\mf u-\bs\psi\in H_0^1(\Omega,\R^d)$, and $C',C''>0$ are constants depending on $\Omega$ and $\bs\psi$. This shows that $J_{\b,\bs f_\b}$ is coercive on $\mathcal U_{\bs\psi}$.
	
	Let $\{\mf u_n\}_n\subset\mathcal U_{\bs\psi}$ be a minimizing sequence. By {\rm (A1)} and the nonnegativity of $\bs\psi$, replacing
	\[
	\mf u_n=(u_{1,n},\dots,u_{d,n})
	\]
	with
	\[
	(|u_{1,n}|,\dots,|u_{d,n}|)
	\]
	does not increase the energy and preserves the prescribed trace. We may therefore assume that $\mf u_n$ is nonnegative.
	
	By coercivity, $\{\mf u_n\}_n$ is bounded in $H^1(\Omega,\R^d)$. Hence, up to a subsequence, there exists a nonnegative $\mf u\in\mathcal U_{\bs\psi}$ such that
	\[
	\mf u_n\rightharpoonup\mf u
	\quad\text{in }H^1(\Omega,\R^d),
	\qquad
	\mf u_n\to\mf u
	\quad\text{in }L^2(\Omega,\R^d)
	\]
	and
	\[
	\mf u_n(x)\to\mf u(x)
	\quad\text{for a.e. }x\in\Omega.
	\]
	Here we used the fact that $\Omega$ is bounded.
	
	We now show that the energy is lower semicontinuous along this sequence. By {\rm (A2)}, for every $i=1,\dots,d$,
	\[
	-F_{i,\b}(x,u_{i,n})
	+\frac{b(x)}{2}u_{i,n}^2
	+Cu_{i,n}
	\geq0.
	\]
	Since $u_{i,n}\to u_i$ almost everywhere, Fatou's lemma yields
	\begin{align*}
		\int_{\Omega}
		\left(
		-F_{i,\b}(x,u_i)
		+\frac{b(x)}{2}u_i^2
		+Cu_i
		\right)\,dx
		\leq
		\liminf_{n\to\infty}
		\int_{\Omega}
		\left(
		-F_{i,\b}(x,u_{i,n})
		+\frac{b(x)}{2}u_{i,n}^2
		+Cu_{i,n}
		\right)\,dx.
	\end{align*}
	On the other hand, the strong $L^2$ convergence, together with $b\in L^\infty(\Omega)$, gives
	\[
	\int_{\Omega}b(x)u_{i,n}^2\,dx
	\to
	\int_{\Omega}b(x)u_i^2\,dx,
	\qquad
	\int_{\Omega}u_{i,n}\,dx
	\to
	\int_{\Omega}u_i\,dx.
	\]
	Therefore,
	\[
	-\int_{\Omega}F_{i,\b}(x,u_i)\,dx
	\leq
	\liminf_{n\to\infty}
	\left(
	-\int_{\Omega}F_{i,\b}(x,u_{i,n})\,dx
	\right).
	\]
	
	Moreover, by the weak lower semicontinuity of the Dirichlet energy,
	\[
	\sum_{i=1}^d\int_{\Omega}|\nabla u_i|^2\,dx
	\leq
	\liminf_{n\to\infty}
	\sum_{i=1}^d\int_{\Omega}|\nabla u_{i,n}|^2\,dx,
	\]
	while, since the interaction term is nonnegative, Fatou's lemma gives
	\[
	\int_{\Omega}
	\sum_{\substack{J\subseteq[d]\\|J|=k}}
	\gamma_Ju_J^2\,dx
	\leq
	\liminf_{n\to\infty}
	\int_{\Omega}
	\sum_{\substack{J\subseteq[d]\\|J|=k}}
	\gamma_Ju_{J,n}^2\,dx.
	\]
	Combining these inequalities, we obtain
	\[
	J_{\b,\bs f_\b}(\mf u,\Omega)
	\leq
	\liminf_{n\to\infty}
	J_{\b,\bs f_\b}(\mf u_n,\Omega),
	\]
	and hence $\mf u$ is a nonnegative minimizer.
	
	The proof for the limit problem \eqref{eq:c infty} is analogous. Indeed, the same coercivity and lower semicontinuity arguments apply, with no interaction term, while the partial segregation constraint is preserved under almost everywhere convergence. Hence \eqref{eq:c infty} is also achieved by a nonnegative minimizer.
\end{proof}

\subsection{Definition of $\alpha_{\ell,N}$}
We briefly recall the definition of $\alpha_{\ell,N}$ appearing in the statement of the previous section, referring to \cite{SoTe1} for an exhaustive discussion of its relation with ACF-type monotonicity formulas.

For $u \in H^1(\S^{N-1})$, let
\[
\lambda(u) := \inf\left\{ \frac{\int_{\S^{N-1}}  |\nabla \varphi|^2 \,d\sigma   }{\int_{\S^{N-1}} \varphi^2 \,d\sigma} 
\left| \begin{array}{l} \varphi \in H^1(\S^{N-1}) \setminus \{0\} \ \text{and}\\ \mathcal{H}^{N-1}(\{\varphi \neq 0\} \cap \{u=0\}) = 0.  \end{array}\right.
\right\}.
\]
where $d\sigma = d\sigma_x$ and $\mathcal{H}^{N-1}$ is the $(N-1)$-dimensional Hausdorff measure and with the convention that $\lambda(u) = +\infty$ if $u \equiv 0$ on $\S^{N-1}$. In particular, if $u$ is continuous then $\lambda(u)$ is the first eigenvalue of the Laplace-Beltrami operator with homogeneous Dirichlet boundary conditions on the open set $\{u >0\}$.

Defining $\gamma: [0,+\infty) \to [0,+\infty)$ as
\[
\gamma(t) := \sqrt{\left(\frac{N-2}{2}\right)^2 + t\,} - \frac{N-2}{2},
\]
we consider the following optimization problem with partial segregation:
\begin{equation}\label{eq:alpha ell,N}
	\alpha_{\ell,N}:=\inf\left\{ \sum_{j=1}^\ell \gamma(\lambda(u_j)) \left| \begin{array}{l} u_j \in H^1(\S^{N-1})  \\  
		\int_{\S^{N-1}} u_1^2 \cdots u_\ell^2\, d\sigma = 0.
	\end{array}\right.\right\},
\end{equation}
The infimum is greater than or equal to $0$, since we are minimizing the sum of nonnegative quantities. We recall that, by the Friedland-Hayman inequality, $\alpha_{2,N}=2$, referring the interested reader to \cite[Chapter 2]{Petrosyan_Shahgholian_Uraltseva_2012} and the references therein. Moreover, we recall this estimate proved in \cite[Lemma 2.3]{SoTe1}. 

\begin{lemma}\label{lemma: stima alpha l,N}
	For every $\ell \geq 2$ we have $0 <\alpha_{\ell,N} \le 2$. 
\end{lemma}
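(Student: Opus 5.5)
We need to prove $0<\alpha_{\ell,N}\le 2$ for every $\ell\ge2$. We split the argument into the upper bound, obtained from an explicit competitor, and the positivity, obtained from a compactness argument.

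\emph{Upper bound.} We take $u_1$ and $u_2$ supported on complementary hemispheres, say $u_1=(x_N)^+$ and $u_2=(x_N)^-$. For $j=3,\dots,\ell$ we take $u_j\equiv 0$, so that $\lambda(u_j)=+\infty$ by convention. That would make the sum infinite, so this choice is not directly usable. Instead we choose $u_j$ for $j\ge3$ so that $\gamma(\lambda(u_j))=0$, which means $\lambda(u_j)=0$, i.e. $u_j\equiv 1$ on the whole sphere. The first Dirichlet eigenvalue of $\S^{N-1}$ is $0$, since constants are admissible.

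With this choice the product $u_1^2\cdots u_\ell^2$ vanishes a.e., because $u_1u_2=0$. Each hemisphere has $\lambda=N-1$. Hence
\[
\gamma(N-1)=\sqrt{\left(\tfrac{N-2}{2}\right)^2+N-1}-\tfrac{N-2}{2}=\tfrac N2-\tfrac{N-2}{2}=1,
\]
and the sum equals $2$, giving $\alpha_{\ell,N}\le2$. Equivalently, the constraint for $\ell$ functions is weaker than the one for $2$ functions when the remaining ones are constant, so $\alpha_{\ell,N}\le\alpha_{2,N}=2$ by Friedland--Hayman.

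\emph{Positivity.} Assume by contradiction that there are admissible tuples $(u_1^n,\dots,u_\ell^n)$ with $\sum_j\gamma(\lambda(u_j^n))\to0$. Then $\lambda(u_j^n)\to0$ for every $j$, since $\gamma$ is continuous, increasing, and vanishes only at $0$. For each $j$ we pick a minimizing (or almost minimizing) $\varphi_j^n$ with $\|\varphi_j^n\|_{L^2(\S^{N-1})}=1$, $\varphi_j^n$ vanishing a.e. on $\{u_j^n=0\}$, and $\int|\nabla\varphi_j^n|^2\to0$.

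By compactness of $H^1(\S^{N-1})\hookrightarrow L^2$, we get $\varphi_j^n\to\varphi_j$ strongly in $L^2$, with $\nabla\varphi_j=0$. So $\varphi_j=c_j$ is a constant with $|c_j|=|\S^{N-1}|^{-1/2}\neq0$.

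On the other hand, $\prod_j(u_j^n)^2=0$ a.e., so up to null sets $\S^{N-1}=\bigcup_j\{u_j^n=0\}$. Since $\varphi_j^n$ vanishes on $\{u_j^n=0\}$, we get $\prod_j\varphi_j^n=0$ a.e. Passing to an a.e.\ convergent subsequence gives $\prod_j\varphi_j=0$ a.e., contradicting $\prod_j c_j\neq0$. Hence $\alpha_{\ell,N}>0$.

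The main obstacle is the positivity step. One must handle the possibly degenerate case where some $u_j^n$ are nonzero only on tiny sets, and make sure the infimum defining $\lambda$ is approximated by admissible test functions. Both points are handled by using near-optimal test functions and the a.e.\ product identity as above.
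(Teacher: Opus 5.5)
Your proof is correct and complete. Note that the paper does not prove this lemma itself: it only recalls it from \cite[Lemma 2.3]{SoTe1}. So your argument is a self-contained proof of a quoted fact, not a variant of an argument given in the paper.

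For the upper bound, the competitor $u_1=x_N^+$, $u_2=x_N^-$, $u_j\equiv 1$ for $j\ge 3$ is the natural choice, equivalently $\alpha_{\ell,N}\le\alpha_{2,N}$. You do not even need to identify $\lambda(x_N^\pm)$ with the Dirichlet eigenvalue of the hemisphere. Testing with $\varphi=x_N^\pm$ itself, which is admissible, gives $\lambda(x_N^\pm)\le N-1$, and monotonicity of $\gamma$ then yields a sum at most $2$.

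For positivity, your compactness argument is sound. Near-optimal test functions with vanishing Dirichlet energy converge to nonzero constants. At the same time, the covering $\S^{N-1}=\bigcup_j\{u_j^n=0\}$ (up to null sets) forces $\prod_j\varphi_j^n=0$ a.e., and this passes to the a.e. limit.

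An alternative is a pigeonhole argument. Some zero set $\{u_j=0\}$ has measure at least $|\S^{N-1}|/\ell$. A Poincar\'e inequality for $H^1(\S^{N-1})$ functions vanishing on a set of that measure, or the spherical Faber--Krahn inequality, then bounds $\lambda(u_j)$ below by a constant $c(\ell,N)>0$, hence $\alpha_{\ell,N}\ge\gamma(c(\ell,N))$. That route gives a lower bound that is quantitative in principle. Yours is purely qualitative but needs no isoperimetric input, and qualitative positivity is all the lemma asks for.

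Two cosmetic points. You should delete the false start with $u_j\equiv 0$. The closing ``main obstacle'' paragraph is also unnecessary: tuples in which some $u_j^n$ is nonzero only on a tiny set simply have large $\lambda(u_j^n)$, so they never enter a sequence with $\sum_j\gamma(\lambda(u_j^n))\to 0$.
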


\subsection{A decay estimate}
We now recall an estimate which turns out to be useful in proving exponential decay of suitable sequences. We refer to \cite[Lemma 3.1]{NoTaTeVe} for the proof.
\begin{lemma}\label{lemma:decadimento esponenziale}
	Let $B_R\subset \R^N$ be any ball of radius $R$. Let $M, A$ be positive constants, $h \in L^2(B_{2R})$, and let $u\in H^1(B_{2R})$ be a solution of
	$$
	\left\{\begin{array}{rlll}
		-\Delta u & \leq & -Mu + h & {\rm in }\ B_{2R}\\
		u & \geq & 0 & {\rm in}\ B_{2R}\\
		u & \leq & A & {\rm on}\ \partial B_{2R}.
	\end{array}\right.
	$$
	Then there holds
	$$\|u\|_{L^2(B_{R})}\leq CA e^{-\frac{R}{2} \sqrt{M}}+\frac{1}{M}\|h\|_{L^2(B_{2R})},$$
\end{lemma}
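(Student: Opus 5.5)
We prove the decay estimate by a comparison argument: construct an explicit supersolution that handles the boundary datum, and control the forcing term $h$ through a separate linear problem. Split $u\le v+w$, where $v$ and $w$ solve
\[
\begin{cases}-\Delta v+Mv=0 & \text{in }B_{2R},\\ v=A & \text{on }\partial B_{2R},\end{cases}\qquad
\begin{cases}-\Delta w+Mw=h^+ & \text{in }B_{2R},\\ w=0 & \text{on }\partial B_{2R}.\end{cases}
\]
Then $z:=u-v-w$ satisfies $-\Delta z+Mz\le h-h^+\le0$ weakly, with $z^+\in H^1_0(B_{2R})$, since $u\le A$ on the boundary. Testing with $z^+$ gives $\int|\nabla z^+|^2+M\int (z^+)^2\le0$, hence $z\le 0$, so $0\le u\le v+w$. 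Consequently
\[
\|u\|_{L^2(B_R)}\le\|v\|_{L^2(B_R)}+\|w\|_{L^2(B_{2R})}.
\]

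For the term $w$, test its equation with $w$ itself:
\[
M\|w\|_{L^2}^2\le\int h^+w\le\|h\|_{L^2}\|w\|_{L^2}.
\]
This yields $\|w\|_{L^2(B_{2R})}\le M^{-1}\|h\|_{L^2(B_{2R})}$, which is exactly the second term of the claimed estimate.

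For the term $v$, we need a pointwise exponential decay bound in the interior. A clean choice is the sum of one-dimensional exponentials
\[
\bar v(x):=A\sum_{j=1}^N\Big(e^{\sqrt{M/N}\,(x_j-2R)}+e^{-\sqrt{M/N}\,(x_j+2R)}\Big),
\]
centered at the center of the ball. Each summand satisfies $\Delta\bar v=\tfrac MN\cdot(\text{summand})$, so $-\Delta\bar v+\tfrac MN\bar v=0$. The factor $1/N$ is a problem: it only gives $e^{-cR\sqrt M}$ with a worse constant $c$.

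The sharper route, which we expect to be the main technical step, is to use a radial supersolution instead. Set
\[
\bar v(x)=A\,\frac{\cosh(\sqrt M|x|)}{\cosh(2R\sqrt M)}.
\]
A direct computation gives $\Delta\bar v=M\bar v+\frac{N-1}{|x|}\sqrt M\,A\,\frac{\sinh(\sqrt M |x|)}{\cosh(2R\sqrt M)}\ge M\bar v$, so $-\Delta\bar v+M\bar v\le0$. This is the wrong sign for a supersolution. The fix is to sacrifice part of the decay rate: a radial function $\phi$ with $\phi''+\frac{N-1}{r}\phi'=M\phi$, namely a modified Bessel profile, is an exact solution. Using the standard bound $\phi(r)/\phi(2R)\le C e^{-\sqrt M(2R-r)}$ for $r\le R$, we obtain $v\le CAe^{-\sqrt M R}$ on $B_R$. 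Alternatively, we can take the cheaper supersolution $Ae^{\sqrt{M}(|x|-2R)/2}$-type barriers, or compare on small balls of radius $R$ centered at points of $B_R$. In any of these forms this yields $\|v\|_{L^2(B_R)}\le CA\,|B_R|^{1/2}e^{-\frac R2\sqrt M}$, where the constant depends on $N$ and $R$ through the volume factor, consistent with the constant $C$ in the statement.

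The only delicate point is verifying the supersolution inequality for the chosen radial barrier while keeping the exponent $\tfrac R2\sqrt M$; the rest of the argument is the standard weak maximum principle plus the energy estimate above.
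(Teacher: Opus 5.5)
The paper does not prove this lemma; it only refers to \cite[Lemma 3.1]{NoTaTeVe}, so there is no in-paper argument to compare with. Judged on its own, your proof is correct in substance. The ingredients are the right ones:
\begin{itemize}
\item the weak comparison $0\le u\le v+w$, obtained by testing with $z^+\in H^1_0(B_{2R})$;
\item the energy estimate $\|w\|_{L^2(B_{2R})}\le M^{-1}\|h\|_{L^2(B_{2R})}$;
\item a pointwise exponential bound for the homogeneous part $v$.
\end{itemize}
You are also right that $C$ must depend on $R$ through $|B_R|^{1/2}$. For $h=0$ and $MR^2\to0$, $v$ is close to $A$ on $B_R$.

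Some of your remarks on barriers are wrong and should be dropped. The delicate point is a growth ratio, not a supersolution inequality.
\begin{itemize}
\item The exponential sum fails because it is not $\ge A$ on $\partial B_{2R}$, not because of the factor $1/N$. With rate $\sqrt M$ instead of $\sqrt{M/N}$ it would be an exact solution.
\item The ``cheaper'' barrier $Ae^{\sqrt M(|x|-2R)/2}$ is not a supersolution. One computes $-\Delta\bar v+M\bar v=\bigl(\tfrac34M-\tfrac{(N-1)\sqrt M}{2|x|}\bigr)\bar v$, which is negative for $|x|<\tfrac{2(N-1)}{3\sqrt M}$.
\item The bound $\phi(r)/\phi(2R)\le Ce^{-\sqrt M(2R-r)}$ for all $r\le R$ is false with $C$ independent of $M$ and $R$. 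At $r=0$ there is an extra factor of order $(\sqrt MR)^{(N-1)/2}$.
\end{itemize}

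The Bessel route still works once stated correctly. By uniqueness, $v=A\phi(|x|)/\phi(2R)$. Moreover $\phi$ is increasing, since $(r^{N-1}\phi')'=Mr^{N-1}\phi>0$ with $\phi'(0)=0$. So only $r=R$ matters, and $I_\nu(z)\sim e^z/\sqrt{2\pi z}$ gives $\phi(R)/\phi(2R)\le C_Ne^{-\sqrt MR}$.

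A cleaner fix avoids special functions and gives exactly the exponent in the statement. Set $\Phi(x):=\fint_{\S^{N-1}}e^{\sqrt M\,x\cdot e}\,d\sigma(e)$. It is radial, positive and satisfies $\Delta\Phi=M\Phi$, hence $v=A\Phi/\Phi|_{\partial B_{2R}}$. On $B_R$ one has $\Phi\le e^{\sqrt MR}$. On $\partial B_{2R}$, restricting the average to the cap $\{e\cdot x/|x|\ge\tfrac34\}$ gives $\Phi\ge c_Ne^{\frac32\sqrt MR}$. Therefore $v\le c_N^{-1}Ae^{-\frac R2\sqrt M}$ in $B_R$, and the lemma holds with $C=c_N^{-1}|B_R|^{1/2}$.
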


\subsection{A Liouville-type theorem}

We state here a Liouville-type theorem for globally H\"older continuous solutions of certain elliptic problems (see \cite[Proposition 3.2]{SoTe1} for a proof). Recall that, for some $\alpha \in (0,1)$, we say that $v$ is globally $\alpha$-H\"older continuous in $\R^N$ if the seminorm $[v]_{C^{0,\alpha}(\R^N)}$ is finite, imposing no limitation on the $L^\infty$ norm.

\begin{proposition}\label{thm: Liouville 1 comp}
	Let $\alpha \in (0,1)$, and let $w$ be a globally $\alpha$-H\"older continuous function in $\R^N$. Suppose moreover that one of the following equations is satisfied by $w$:
	\begin{itemize}
		\item[($i$)] either $\Delta w=0$ in $\R^N$;
		\item[($ii$)] or $\Delta w=\lambda$ in $\R^N$ for some $\lambda \in \R$;
		\item[($iii$)] or else $\Delta w = \lambda w$ with $\lambda >0$ in $\R^N$.  
	\end{itemize}
	Then $w$ must be constant and, in case ($iii$), $w \equiv 0$.
\end{proposition}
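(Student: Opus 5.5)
The plan is to argue by contradiction and reduce each case to a growth-rate comparison for a globally $\alpha$-H\"older function, $|w(x)|\leq |w(0)|+[w]_{C^{0,\alpha}}|x|^\alpha$ with $\alpha<1$. Throughout, $L:=[w]_{C^{0,\alpha}(\R^N)}$.

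\emph{Case (i).} Fix $x_0\in\R^N$ and $R>0$. By the mean value property, $w(x_0)-w(0)$ equals the average of $w(x_0+y)-w(y)$ over $y\in B_R$. I would instead use the gradient estimate for harmonic functions,
\[
|\nabla w(x_0)|\leq \frac{N}{R}\sup_{B_R(x_0)}|w-w(x_0)|\leq \frac{N}{R}LR^\alpha .
\]
Letting $R\to\infty$ and using $\alpha<1$ gives $\nabla w\equiv0$, so $w$ is constant.

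\emph{Case (ii).} Subtract the explicit solution $\frac{\lambda}{2N}|x|^2$. The difference $h=w-\frac{\lambda}{2N}|x|^2$ is harmonic and satisfies
\[
|h(x)|\leq C(1+|x|^2).
\]
By the classical Liouville theorem for polynomial growth, $h$ is a harmonic polynomial of degree at most $2$. Hence $w=\frac{\lambda}{2N}|x|^2+h$ is a polynomial of degree at most $2$. A polynomial with sublinear growth $|w(x)|\leq C(1+|x|^\alpha)$ must be constant: restrict to lines, where a nonconstant one-variable polynomial grows at least linearly. Then $\Delta w=0$, which also forces $\lambda=0$.

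\emph{Case (iii).} Here $\lambda>0$ and $w$ has sublinear growth. I would use a comparison or decay argument; Lemma \ref{lemma:decadimento esponenziale} is one option, applied to $w^\pm$. Kato's inequality gives $\Delta w^+\geq \lambda w^+$ in the distributional sense, so $-\Delta w^+\leq-\lambda w^+$. On $\partial B_{2R}(x_0)$ we have $w^+\leq A_R:=|w(x_0)|+L(2R)^\alpha$. The lemma with $h=0$ then yields
\[
\|w^+\|_{L^2(B_R(x_0))}\leq CA_R\,e^{-\frac{R}{2}\sqrt{\lambda}}.
\]
I also need $\|w^+\|_{L^2(B_1(x_0))}\leq \|w^+\|_{L^2(B_R(x_0))}$. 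As $R\to\infty$ the right side tends to $0$, since the exponential decay beats the polynomial growth of $A_R$. Hence $w^+\equiv0$ near $x_0$; this needs a check that the constant $C$ in the lemma does not grow faster than polynomially in $R$.

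A cleaner alternative for Case (iii) avoids that check. Use the barrier $\varphi_R(x)=A_R\cosh(\sqrt{\lambda/N}\,x_1)\cdots$; more simply take $\varphi_R=A_R\,\Phi(x)/\Phi_{\min}(\partial B_R)$, where $\Phi(x)=\sum_{j}\cosh(\sqrt{\lambda}\,x_j)$ satisfies $\Delta\Phi=\lambda\Phi$. The maximum principle for $\Delta-\lambda$ gives $|w|\leq\varphi_R$ in $B_R$. At a fixed point, $\varphi_R\to0$ because $\Phi_{\min}(\partial B_R)\geq \cosh(\sqrt{\lambda}R/\sqrt N)$ grows exponentially while $A_R$ grows polynomially. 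So $w\equiv0$.

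I expect Case (iii) to be the only step requiring care, specifically the choice of a barrier whose minimum on the sphere grows exponentially. Cases (i) and (ii) are routine applications of interior gradient estimates and polynomial Liouville theorems.
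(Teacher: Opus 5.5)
The paper does not prove this proposition itself. It quotes it from \cite[Proposition 3.2]{SoTe1} and uses it as a black box, so there is no in-paper argument to compare against. Your proposal is a correct, self-contained proof.

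In case (i), the interior gradient estimate applied to the harmonic function $w-w(x_0)$ gives $|\nabla w(x_0)|\le NLR^{\alpha-1}\to0$. One point to state explicitly: in the paper's applications $w$ is only an $H^1_{\loc}$ weak solution, for instance the limit $w_1$ with $\Delta w_1=\lambda$. You should therefore invoke Weyl's lemma or elliptic regularity before working classically. Alternatively, in (iii) you can run the comparison in weak form by testing with the negative part of $\varphi_R\mp w$.

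Case (ii) is fine. A shorter route uses spherical means: $\fint_{\partial B_R}w\,d\sigma=w(0)+\frac{\lambda}{2N}R^2$. This is incompatible with $|w-w(0)|\le LR^\alpha$ on $\partial B_R$ unless $\lambda=0$, which reduces to (i).

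In case (iii), keep the barrier argument; it is complete.
\begin{itemize}
\item $\Delta\Phi=\lambda\Phi$ holds.
\item On $\partial B_R$ some coordinate satisfies $|x_j|\ge R/\sqrt N$, so $\Phi\ge\cosh(\sqrt\lambda R/\sqrt N)$ there.
\item The comparison principle for $-\Delta+\lambda$ applied to $\varphi_R\mp w$ then gives $|w(x)|\le (|w(0)|+LR^\alpha)\Phi(x)/\cosh(\sqrt\lambda R/\sqrt N)\to0$ at each fixed $x$.
\end{itemize}
This removes the need to track the $R$-dependence of the constant in Lemma~\ref{lemma:decadimento esponenziale}. You can drop the first variant. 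You can also drop the unfinished product-of-$\cosh$ fragment, although $\prod_j\cosh(\sqrt{\lambda/N}\,x_j)$ would also be a valid barrier.
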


\section{New Liouville-type theorems}\label{sec:new liou}
\subsection{Liouville-type theorems for $k$-partially segregated functions}

The aim of this subsection is to establish new Liouville-type theorems for systems modelling partial segregation, generalizing the results obtained in \cite[Section 2]{SoTe1} for ternary interactions. In particular, we need to establish new rigidity results for both $k$-partially segregated functions and for solutions to elliptic systems with $k$-wise interactions. Although all the key tools are already available in the aforementioned reference, and the arguments presented here are essentially straightforward consequences of those results, our new formulation makes it possible to address a more general setting with $d$ components and $k$-wise interactions.

\medskip

First of all, let us recall a result for totally segregated components that will serve as the base step of an inductive argument (see \cite[Proposition 2.2]{NoTaTeVe} for the proof). 

\begin{proposition}\label{prop: Liou sys 2}
	Let $\alpha \in (0,1)$, and let $u,v \in H^1_{\loc}(\R^N) \cap C(\R^N)$ be globally $\alpha$-H\"older continuous functions in $\R^N$ such that
	$$
		\begin{cases}
			-\Delta u \le 0   \\
			-\Delta v \le 0 \\
			u \, v \equiv 0,
		\end{cases}
		\qquad u, v \ge 0, \qquad  \text{in $\R^N$}.
	$$
	Then one component of $(u,v)$ must vanish.
\end{proposition}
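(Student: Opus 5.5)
We argue by contradiction with the Alt--Caffarelli--Friedman monotonicity formula. Assume both $u$ and $v$ are nontrivial. Since $u,v\geq 0$ are continuous, subharmonic and $uv\equiv 0$, they are admissible in the ACF formula. For $x_0\in\R^N$ and $r>0$ we set
\[
\Phi(r):=\frac{1}{r^4}\int_{B_r(x_0)}\frac{|\nabla u|^2}{|x-x_0|^{N-2}}\,dx\int_{B_r(x_0)}\frac{|\nabla v|^2}{|x-x_0|^{N-2}}\,dx .
\]
This quantity is nondecreasing in $r$. Equivalently, by the Friedland--Hayman inequality, $\Phi'\geq 0$ because $\gamma(\lambda_1)+\gamma(\lambda_2)\geq 2=\alpha_{2,N}$ for disjointly supported traces on the sphere.

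The plan then has three steps.

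\textbf{Step 1: growth bound.} Using global $\alpha$-H\"older continuity, we bound $\Phi(r)$ from above for large $r$. Each of $u,v$ is nonnegative and subharmonic, so we can use a Caccioppoli-type estimate. Without loss of generality $u(x_0)=0$, or at least we replace $u$ by $u-\min_{B_{2r}}u$ in the test function, which is allowed because $u\Delta u\geq0$ in the sense of measures. With a cutoff $\eta$ this gives
\[
\int_{B_r}|\nabla u|^2\eta^2\leq C\int u^2|\nabla\eta|^2 .
\]
To handle the weight $|x-x_0|^{-N+2}$, we split into dyadic annuli $B_{2^{-j}r}\setminus B_{2^{-j-1}r}$ and apply the Caccioppoli estimate on each. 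We center $x_0$ at a point where $u(x_0)=v(x_0)=0$; such a point exists on the common boundary of the positivity sets, which are disjoint. The Caccioppoli estimate on the annulus at scale $\rho$ gives a contribution $\lesssim \rho^{-N+2}\rho^{N-2}\rho^{2\alpha}=\rho^{2\alpha}$. Summing the geometric series yields
\[
\int_{B_r}\frac{|\nabla u|^2}{|x-x_0|^{N-2}}\leq Cr^{2\alpha},
\]
and similarly for $v$. Hence $\Phi(r)\leq Cr^{4\alpha-4}\to 0$ as $r\to\infty$, since $\alpha<1$.

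\textbf{Step 2: contradiction.} Monotonicity gives $\Phi(r)\leq\lim_{R\to\infty}\Phi(R)=0$ for every $r>0$. Thus $\nabla u\equiv0$ or $\nabla v\equiv 0$ in every ball centered at $x_0$, so one of them is constant on $\R^N$. Since that function vanishes at $x_0$, it is identically zero. This contradicts the assumption that both are nontrivial.

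\textbf{Step 3: choice of center.} If $\{u>0\}$ and $\{v>0\}$ are both nonempty, then, as they are open and disjoint and $\R^N$ is connected, there is a point $x_0$ with $u(x_0)=v(x_0)=0$. Otherwise one of the functions vanishes identically and there is nothing to prove.

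The main obstacle is the weighted energy bound in Step 1 near the center, where the weight is singular. The dyadic decomposition handles it, provided the Caccioppoli inequality for nonnegative subharmonic functions is applied with the oscillation bound $\operatorname{osc}_{B_{2\rho}}u\leq C\rho^\alpha$. This oscillation bound is valid because $u(x_0)=0$ and $u\geq0$ imply $0\leq u\leq C\rho^\alpha$ on $B_{2\rho}(x_0)$.
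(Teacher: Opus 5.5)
Your proof is correct and follows the standard route behind this result, which the paper does not prove but cites from \cite[Proposition 2.2]{NoTaTeVe}: center the Alt--Caffarelli--Friedman functional at a common zero of $u$ and $v$, use the H\"older growth to get $\Phi(r)\le Cr^{4\alpha-4}\to0$, and conclude by monotonicity. Your dyadic-annuli Caccioppoli bound for the weighted energy is only a cosmetic variant of the usual cutoff computation. Once $x_0$ is a common zero you have $0\le u\le C\rho^\alpha$ on $B_{2\rho}(x_0)$, so the aside about subtracting $\min u$ is unnecessary.
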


We now recall a fundamental Liouville-type theorem for partially segregated functions (see \cite[Theorem 3.4]{SoTe1}).

\begin{theorem}\label{thm: liouville partial segr implies one constant}
	Let $k \ge 3$ and $N \ge 2$ be positive integers, and let $\alpha \in (0,\alpha_{k,N}/k)$. For $i=1,\dots,k$, let $u_i \in H^1_{\loc}(\R^N) \cap C(\R^N)$ be globally $\alpha$-H\"older continuous functions in $\R^N$, such that
	\[
	-\Delta u_i \le 0 \quad \text{and} \quad u_i \ge 0 \quad \text{in $\R^N$},
	\]
	and moreover the partial segregation condition holds:
	\[
	\prod_{i=1}^k u_i  \equiv 0 \quad \text{in $\R^N$}.
	\]
	Then at least one function $u_i$ must be constant.
\end{theorem}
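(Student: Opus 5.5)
The plan is to argue by contradiction using an Alt--Caffarelli--Friedman type monotonicity formula for $k$ subharmonic functions with partial segregation, in the spirit of the classical proof of Proposition \ref{prop: Liou sys 2}. Assume that no $u_i$ is constant. For a center $x_0$, which we may take to be the origin, set
\[
\Phi(r):=\prod_{i=1}^k \frac{1}{r^{2\nu}}\int_{B_r}\frac{|\nabla u_i|^2}{|x|^{N-2}}\,dx ,
\]
with $\nu:=\alpha_{k,N}/k$ chosen so that $k\cdot 2\nu = 2\alpha_{k,N}$.

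The first step is the monotonicity of $\Phi$. Differentiating $\log\Phi$ and using the standard estimate for each factor gives
\[
\frac{d}{dr}\log\Phi(r)\geq \frac{2}{r}\Bigl(\sum_i\gamma(\lambda(u_i(r\cdot)|_{\S^{N-1}}))-\alpha_{k,N}\Bigr).
\]
This estimate rests on subharmonicity, $\int_{B_r}|\nabla u_i|^2|x|^{2-N}\le$ a boundary term, and on the optimal choice of the Rayleigh quotient on $S_r$. Since $u_1^2\cdots u_k^2\equiv0$ on every sphere, the traces are admissible in \eqref{eq:alpha ell,N}. The sum is therefore at least $\alpha_{k,N}$, and $\Phi$ is nondecreasing. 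Some care is needed because the traces are only $H^1$ on spheres. Continuity of the $u_i$ makes $\lambda$ the Dirichlet eigenvalue of the positivity set, so this step is fine.

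The second step uses the growth bound. Global $\alpha$-H\"older continuity and the Caccioppoli inequality for nonnegative subharmonic functions, applied to $(u_i-u_i(0))^+$ or more precisely to $u_i$ minus its infimum on $B_{2r}$, give
\[
\int_{B_r}\frac{|\nabla u_i|^2}{|x|^{N-2}}\le C r^{2\alpha}.
\]
This requires a dyadic decomposition to handle the weight. Hence $\Phi(r)\le C r^{2k(\alpha-\nu)}\to0$ as $r\to\infty$, because $\alpha<\nu$.

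The third step concludes. Monotonicity gives $\Phi(r)\le \lim_{R\to\infty}\Phi(R)=0$ for all $r$, so for every $r$ some factor vanishes and some $u_i$ is constant on $B_r$. Letting $r\to\infty$ along a sequence, one index $i$ repeats infinitely often, and that $u_i$ is constant on $\R^N$. This contradicts our assumption and proves the theorem.

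The main obstacle is the first step. Justifying the differential inequality requires $\Phi(r)>0$ for the logarithmic derivative, and otherwise the argument must be run directly. It also needs the weighted integrals to be finite near the origin, which follows from subharmonicity and the fact that $\gamma$ gives the optimal homogeneity. The cleanest approach is to prove monotonicity on the set where all factors are positive and note that if some factor vanishes at a radius we are already done. The lower bound via $\alpha_{k,N}$ directly uses the definition \eqref{eq:alpha ell,N}, with the convention $\lambda=+\infty$ handled trivially.
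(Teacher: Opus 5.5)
Your proposal is correct and takes essentially the same route as the source of this statement: the paper does not reprove it but quotes it from \cite[Theorem 3.4]{SoTe1}, where it follows from the ACF-type monotonicity formula for $k$ partially segregated nonnegative subharmonic functions (each factor normalized by $r^{2\alpha_{k,N}/k}$, with $\sum_i\gamma(\lambda_i)\ge\alpha_{k,N}$ obtained from the definition \eqref{eq:alpha ell,N} applied to the traces on $S_r$), combined with the growth bound $\int_{B_r}|\nabla u_i|^2|x|^{2-N}\,dx\le Cr^{2\alpha}$ coming from global H\"older continuity. The one point to tidy up is the vanishing of factors: a factor vanishing at a single radius only makes $u_i$ constant on that ball, not on $\R^N$, but this causes no difficulty, because each factor is nondecreasing in $r$, so once $\Phi(r_0)>0$ all factors stay positive for $r\ge r_0$ and your logarithmic-derivative argument applies on $[r_0,\infty)$.
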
 

We can actually prove this stronger result, which is an inductive generalization of \cite[Corollary 3.5]{SoTe1}

\begin{corollary}\label{cor: liou seg}
	Let $N \ge 2, k\geq 2$ be positive integers, and let $$\alpha \in (0,\min_{\ell=2,\dots,k} \alpha_{\ell,N}/\ell).$$
	For $i=1,\dots,k$, let $u_i \in H^1_{\loc}(\R^N) \cap C(\R^N)$ be globally $\alpha$-H\"older continuous functions in $\R^N$, such that
	\[
	-\Delta u_i \le 0 \quad \text{and} \quad u_i \ge 0 \quad \text{in $\R^N$},
	\]
	and moreover the partial segregation condition holds:
	\[
	\prod_{i=1}^k u_i \equiv 0 \quad \text{in $\R^N$}.
	\]
	Then at least one component $u_i$ vanishes identically.
\end{corollary}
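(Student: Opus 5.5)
We argue by induction on $k\geq 2$, using Proposition \ref{prop: Liou sys 2} as the base case and Theorem \ref{thm: liouville partial segr implies one constant} for the inductive step. For $k=2$ the condition is $u_1u_2\equiv 0$, and the exponent range is $\alpha<\alpha_{2,N}/2=1$. So Proposition \ref{prop: Liou sys 2} applies directly and yields that one of $u_1,u_2$ vanishes identically.

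Now let $k\geq 3$ and assume the statement holds for $k-1$ functions. Observe that
\[
\min_{\ell=2,\dots,k-1}\alpha_{\ell,N}/\ell\;\geq\;\min_{\ell=2,\dots,k}\alpha_{\ell,N}/\ell,
\]
so our $\alpha$ is admissible for the inductive hypothesis. Our $\alpha$ is also below $\alpha_{k,N}/k$. Hence Theorem \ref{thm: liouville partial segr implies one constant} applies to $(u_1,\dots,u_k)$ and gives an index, say $u_k$ after relabeling, with $u_k\equiv c$ for some constant $c\geq 0$. If $c=0$ we are done.

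If $c>0$, the partial segregation condition becomes $c\prod_{i=1}^{k-1}u_i\equiv 0$, that is,
\[
\prod_{i=1}^{k-1}u_i\equiv 0 \quad\text{in }\R^N.
\]
The remaining functions $u_1,\dots,u_{k-1}$ are still nonnegative, subharmonic, continuous, in $H^1_{\loc}$ and globally $\alpha$-H\"older. For $k-1\geq 2$ the inductive hypothesis therefore applies to this $(k-1)$-tuple and yields some $u_i$ with $i\leq k-1$ vanishing identically. This closes the induction.

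No step is a genuine obstacle, since all the analytic content sits in Theorem \ref{thm: liouville partial segr implies one constant} and Proposition \ref{prop: Liou sys 2}. The only point requiring care is the bookkeeping of the exponent threshold: this is exactly why $\alpha$ is taken below the minimum over all $\ell\le k$, so that every lower-order step of the induction is admissible. One also checks that the base case $k=2$ needs only $\alpha<1$.
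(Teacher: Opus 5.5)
Your proposal is correct and follows the paper's proof essentially step by step. The argument is an induction on $k$. The base case $k=2$ is Proposition~\ref{prop: Liou sys 2}, using $\alpha_{2,N}=2$. In the inductive step, Theorem~\ref{thm: liouville partial segr implies one constant} gives a constant component, which either vanishes or can be dropped so that the inductive hypothesis applies to the remaining $k-1$ functions, with the same threshold bookkeeping you describe.
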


\begin{proof}
	We prove the result by induction on $k\geq 2$.
	
	\medskip
	
	\emph{Base step: $k=2$}
	
	Since $\alpha_{2,N}=2$ by the Friedland-Hayman inequality, this is precisely the thesis of Proposition \ref{prop: Liou sys 2}.
	
	\medskip
	
	\emph{Inductive step}
	
	Assume the thesis is true for some $k\geq 2$, we aim at proving it for $k+1$. Assume then that
	$$\alpha \in (0,\min_{\ell=2,\dots,k+1} \alpha_{\ell,N}/\ell)$$
	and consider $k+1$ subharmonic nonnegative functions globally $\alpha$-H\"older continuous satisfying
	\begin{equation}\label{eq:7}
		\prod_{i=1}^{k+1} u_{i} \equiv 0.
	\end{equation}
	Then, by Theorem \ref{thm: liouville partial segr implies one constant}, at least one component $u_j$ is constant. If $u_j=0$, we are done. Otherwise, by \eqref{eq:7}, the other $k$ components satisfy 
	$$\prod_{\substack{i=1\\ i\neq j}}^{k+1} u_{i} \equiv 0.$$
	Then, by the inductive hypothesis, since in particular
	$$\alpha \in (0,\min_{\ell=2,\dots,k} \alpha_{\ell,N}/\ell),$$
	another component vanishes identically. This proves the inductive step.
\end{proof}

\subsection{Liouville theorem for solutions to elliptic systems with $k$-wise interaction} 
To reach a contradiction in the proof of Theorems \ref{thm: regolarità interna} and \ref{thm: regolarità al bordo}, we will make use of a new Liouville-type theorem for entire solutions to a system featuring $k$-wise interactions, which generalizes \cite[Corollary 1.4]{SoTe1} to our context.

\begin{corollary}[Liouville-type theorem]\label{cor: liouville intere interazione}
	Let $d\geq k\geq2$ and $N\geq2$, and, for $i=1,\dots,d$, let
	$u_i\in H^1_{\loc}(\R^N)\cap C(\R^N)$ be nonnegative functions satisfying
	\begin{equation}\label{eq:entire system liouville}
		\Delta u_i
		=
		u_i
		\sum_{\substack{J\subseteq[d]\setminus\{i\}\\ |J|=k-1}}
		M_{J,i}u_J^2
		\quad\text{in }\R^N,
		\qquad i=1,\dots,d,
	\end{equation}
	for some nonnegative coefficients
	\[
	\{M_L:L\subseteq[d],\ |L|=k\}
	\]
	satisfying the symmetry condition \eqref{eq:symmetry interaction coefficients}.
	Assume also that
	\[
	0\leq u_i(x)\leq C(1+|x|^\alpha)
	\qquad\text{for every }x\in\R^N,
	\]
	for some $\alpha<\alpha_{k,N}/k$. Then every component $u_i$ is constant.
	
	In particular, if $M_L>0$ for every $L\subseteq[d]$ with $|L|=k$,
	then at least $d-k+1$ components vanish identically, while all the
	remaining components are constant.
\end{corollary}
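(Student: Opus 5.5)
We reduce the statement to the corresponding result of \cite{SoTe1} for ternary interactions (their Corollary 1.4, which rests on an Alt--Caffarelli--Friedman-type monotonicity formula with exponent $\alpha_{k,N}$), adapted to $k$-wise interactions. The proof has two parts: show every component is constant, then read off which constants vanish from the equations.

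\emph{Step 1: every component is constant.} The key observation is that each $u_i$ is subharmonic: since $M_L\geq0$ and $u_i\geq0$, \eqref{eq:entire system liouville} gives $\Delta u_i\geq0$. We argue by contradiction and assume some component, say $u_1$, is nonconstant.

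\emph{Step 2: monotonicity formula.} Consider a $k$-tuple $(u_{j_1},\dots,u_{j_k})$ and the ACF-type functional
\[
\Phi(r)=\prod_{\ell=1}^k\frac{1}{r^{2\nu}}\int_{B_r}\frac{|\nabla u_{j_\ell}|^2+u_{j_\ell}^2\sum_J M_{J,j_\ell}u_J^2}{|x|^{N-2}}\,dx .
\]
This is exactly the functional of \cite{SoTe1}, with the interaction term in place of the ternary one. The essential structural input there is the following. On any sphere, either the product $u_{j_1}^2\cdots u_{j_k}^2$ is small, so the $k$-tuple is almost partially segregated, or the interaction term is large. Such an estimate is available for every $k$-tuple, because the interaction energy of $u_{j_\ell}$ contains the term $M_{L}\prod_{m\neq\ell}u_{j_m}^2$ with $L=\{j_1,\dots,j_k\}$. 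This yields the lower bound on the optimal-partition quantity needed for $\Phi$ to be monotone, up to lower-order terms, when $\nu<\alpha_{k,N}/k$.

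\emph{Step 3: contradiction from growth.} The growth bound $u_i\leq C(1+|x|^\alpha)$ with $\alpha<\alpha_{k,N}/k$, combined with Caccioppoli estimates (using subharmonicity and the equation), shows that $\Phi(r)\to0$ as $r\to\infty$. Monotonicity then forces $\Phi\equiv0$, so some component of the tuple is constant. It remains to pass from ``some component in every tuple is constant'' to ``all components are constant.'' Here I would use the original device of \cite{SoTe1}: if $u_1$ is nonconstant, choose the tuple to contain $u_1$. Components that are constant and positive cause the equations to reduce to a lower-order interaction among the rest, so we can induct on $k$. The base case $k=2$ is the classical statement of \cite{NoTaTeVe}: bounded-growth entire solutions of $\Delta u=uv^2$, $\Delta v=vu^2$ with exponent below $1$ are constant. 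We use $\alpha_{\ell,N}/\ell\geq$ the relevant threshold, which is where the minimum over $\ell$ would enter. If a full induction over orders is unavailable, the cleaner alternative is to cite \cite[Corollary 1.4]{SoTe1}, whose proof applies verbatim to any $k$.

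\emph{Step 4: which constants vanish.} Once all $u_i\equiv c_i$ are constant, \eqref{eq:entire system liouville} gives
\[
c_i\sum_J M_{J,i}c_J^2=0 \qquad\text{for every } i .
\]
Suppose $M_L>0$ for all $L$ and that $k$ of the constants are positive, forming a set $L$. Then for any $i\in L$ the sum contains the term $M_{L}c_{L\setminus\{i\}}^2>0$, a contradiction. Hence at most $k-1$ constants are positive, that is, at least $d-k+1$ components vanish identically.

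The main obstacle is Step 2/3: adapting the monotonicity formula to the $k$-wise interaction, in particular the eigenvalue lower bound on spheres where segregation holds only approximately.
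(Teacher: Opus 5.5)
The genuine gap is in Step 3, in the passage from ``in every $k$-tuple some component is constant'' to ``every component is constant''. Step 4 is correct, and Steps 2--3 are a sketchy re-derivation of the ingredient the paper simply cites. The induction on $k$ you propose is not well founded. Freezing a component $u_j\equiv c>0$ does not, in general, leave a system of the form \eqref{eq:entire system liouville} of order $k-1$: the remaining equations mix the $(k-1)$-wise terms $c^2M_{J,i}u_{J\setminus\{j\}}^2$ with the untouched $k$-wise terms among the other components. So neither an inductive hypothesis nor the two-component result of \cite{NoTaTeVe} applies as stated, and the threshold comparison you allude to is never made precise. Your fallback, ``the proof of \cite[Corollary 1.4]{SoTe1} applies verbatim'', is exactly what has to be shown. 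There are two further problems. Your functional $\Phi$ contains the interactions of $u_{j_\ell}$ with components outside the tuple, which is not the setting of the monotonicity formula of \cite{SoTe1}. And Step 2 tacitly needs $M_L>0$, whereas the statement allows vanishing coefficients.

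One observation makes the induction unnecessary. Restrict to tuples $L$ with $M_L>0$; tuples with $M_L=0$ do not enter the equations. Drop all the other nonnegative terms to obtain the closed $k$-component subsystem
\[
-\Delta u_j\le -M_L\,u_j\prod_{\ell\in L\setminus\{j\}}u_\ell^2\quad\text{in }\R^N,\qquad j\in L.
\]
This is precisely the setting of Theorem \ref{thm: liou sub} (i.e.\ \cite[Theorem 3.6]{SoTe1}), whose ACF argument gives that some $u_j$, $j\in L$, vanishes identically. Even your weaker conclusion would suffice: if $u_j\equiv c>0$, then $0=\Delta u_j\ge cM_Lu_{L\setminus\{j\}}^2\ge0$. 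In either case $u_L\equiv0$ for every $L$ with $M_L>0$. Hence each term $M_{J,i}u_iu_J^2=M_{J,i}u_{J\cup\{i\}}u_J$ vanishes identically, and every $u_i$ is a nonnegative entire harmonic function. The classical Liouville theorem then gives constancy. This is the paper's proof, and your Step 4 matches its final step.
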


In order to prove Corollary \ref{cor: liouville intere interazione}, we recall
the following Liouville-type theorem, which follows from
\cite[Theorem 3.6]{SoTe1} with a suitable choice of the coupling term $g_i$.

\begin{theorem}\label{thm: liou sub}
	Let $k\geq2$ and $N\geq2$, and, for $i=1,\dots,k$, let
	$u_i\in H^1_{\loc}(\R^N)\cap C(\R^N)$ be nonnegative functions satisfying
	\[
	-\Delta u_i
	\leq
	-Mu_i\prod_{j\neq i}u_j^2
	\quad\text{in }\R^N,
	\]
	for some $M>0$.
	Assume moreover that
	\[
	0\leq u_i(x)\leq C(1+|x|^\alpha)
	\qquad\text{for every }x\in\R^N,
	\]
	with $\alpha<\alpha_{k,N}/k$. Then at least one component $u_i$
	vanishes identically.
\end{theorem}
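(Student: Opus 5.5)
The plan is to argue as in \cite[Theorem 3.6]{SoTe1}, via an Alt--Caffarelli--Friedman type monotonicity formula with $k$ factors. The coupling there is taken to be $g_i:=Mu_i\prod_{j\neq i}u_j^2\geq 0$. We argue by contradiction and assume that no $u_i$ vanishes identically. Fix $\nu$ with $\alpha<\nu<\alpha_{k,N}/k$, and for $r>0$ set
\[
\Phi_i(r):=\frac{1}{r^{2\nu}}\int_{B_r}\frac{|\nabla u_i|^2+g_iu_i}{|x|^{N-2}}\,dx,\qquad \Phi(r):=\prod_{i=1}^k\Phi_i(r).
\]
The first step is the \emph{upper bound}. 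Because $-\Delta u_i\leq -g_i\leq 0$ and $0\leq u_i\leq C(1+|x|^\alpha)$, testing the differential inequality against $u_i\eta^2|x|^{2-N}$, with $\eta$ a cut-off, gives a Caccioppoli-type estimate
\[
\int_{B_r}\frac{|\nabla u_i|^2+g_iu_i}{|x|^{N-2}}\leq C\,r^{2\alpha}\qquad\text{for } r\geq 1 .
\]
Here the weight $|x|^{2-N}$ is handled as in the classical ACF setting, using that $u_i^2$ is subharmonic. Consequently $\Phi(r)\leq Cr^{2k(\alpha-\nu)}\to 0$ as $r\to+\infty$.

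The second step, and the heart of the proof, is the \emph{monotonicity}: I want $\Phi$ to be nondecreasing for $r\geq r_0$. Differentiating $\log\Phi$ and applying the standard ACF computation on spheres bounds each $\Phi_i'/\Phi_i$ from below by $\frac{2}{r}\bigl(\gamma(\lambda(u_i(r\,\cdot)|_{\S^{N-1}}))-\nu\bigr)$, up to error terms produced by $g_i$. If the traces on $S_r$ were exactly $k$-partially segregated, the definition \eqref{eq:alpha ell,N} would give $\sum_i\gamma(\lambda_i)\geq\alpha_{k,N}>k\nu$, and hence $\Phi'\geq 0$. The actual traces are not segregated, so the segregation has to come from the interaction. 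The term $g_iu_i=Mu_i^2\prod_{j\neq i}u_j^2$ appearing in the numerator penalizes the set where all $u_i$ are simultaneously large. One then uses the variational characterization with the potential, namely the eigenvalue of $-\Delta_{\S^{N-1}}+r^2M\prod_{j\neq i}u_j^2$, and argues as in \cite{SoTe1}: as $r\to\infty$ these perturbed spherical eigenvalues converge to partially segregated ones, so that their $\gamma$-sum is eventually $\geq k\nu$. This is precisely where the assumption $M>0$ and the nontriviality of every component enter, since nontriviality forces $\Phi_i(r)>0$ for large $r$. This is the step I expect to be the main obstacle. The difficulty is that the error terms have to be controlled uniformly in $r$ using only the growth bound, and this is done by a compactness/blow-down argument on the normalized traces $u_i(rx)/r^{\alpha}$ on $\S^{N-1}$. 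If I were assembling this from scratch, I would first try to reduce it directly to \cite[Theorem 3.6]{SoTe1} by checking that the coupling $g_i$ above satisfies the structural assumptions there (nonnegativity, the symmetric product structure, and vanishing when any other component vanishes).

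The third step combines the two. Since $\Phi$ is nondecreasing for $r\geq r_0$ and $\Phi(r)\to 0$, we get $\Phi\equiv 0$ on $[r_0,\infty)$. Hence some $\Phi_i$ vanishes for arbitrarily large $r$, which forces $\nabla u_i\equiv 0$ and $g_iu_i\equiv0$, so $u_i$ is a constant $c$. If $c=0$ we are done. If $c>0$, the condition $g_i\equiv 0$ gives $\prod_{j\neq i}u_j\equiv 0$. In that case each remaining $u_j$ satisfies $\Delta u_j\geq Mc^2u_j\prod_{l\neq i,j}u_l^2\geq 0$, and one iterates the argument on $k-1$ components. In the last stage ($k=2$) one has $\Delta u_j\geq Mc^2u_j$, together with sublinear growth. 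Comparing with exponential barriers, or applying Proposition \ref{thm: Liouville 1 comp}(iii) through a subsolution argument, this forces $u_j\equiv0$, which contradicts the standing assumption.
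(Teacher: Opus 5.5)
Your core argument is the paper's. The paper also quotes the ACF-type monotonicity formula of \cite[Theorem 2.6]{SoTe1} for subsolutions of $-\Delta u_i+u_i^{q_i}g_i\le 0$, with $q_i=1$ and $g_i=M\prod_{j\neq i}u_j^2$. It then runs the argument of \cite[Theorem 3.6]{SoTe1}: an eventually nondecreasing product of weighted energies is played against the decay $r^{2k(\alpha-\nu)}$ forced by the growth bound. That is your Steps 1--2, together with the reduction you propose at the end of Step 2.

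The genuine difference is how nonvanishing is used. The paper asserts that if no $u_i$ vanishes then all are positive ``by the strong maximum principle''. Then every $\Phi_i$ is positive and the contradiction is immediate. You keep only nontriviality, and you treat separately the case $\Phi_i\equiv 0$, i.e.\ $u_i\equiv c>0$ with $\prod_{j\neq i}u_j\equiv 0$, by descending to the other $k-1$ components. This is worth having. For mere subsolutions the strong maximum principle does not give positivity: a nonnegative subharmonic function such as $x_1^+$ can vanish on an open set. So your descent is what makes the argument cover the statement as written. The paper's shortcut is justified in its application, Corollary \ref{cor: liouville intere interazione}, where the $u_j$ solve $\Delta u_j=c_j(x)u_j$ with $c_j\geq 0$ continuous.

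Three points need tightening.

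\emph{Order of the cases.} Your claim in Step 2 that nontriviality forces $\Phi_i(r)>0$ fails exactly in that degenerate case, and there the logarithmic derivative is not available. So split the cases at the outset. If no $u_i$ is a positive constant with $\prod_{j\neq i}u_j\equiv0$, every $\Phi_i$ is eventually positive, and monotonicity plus $\Phi\to 0$ is already the contradiction.

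\emph{Threshold for the descent.} The descent needs $\alpha<\alpha_{k-1,N}/(k-1)$, which you do not check. It holds: append the constant $1$ (for which $\lambda=0$ and $\gamma(0)=0$) to an admissible $(k-1)$-tuple. This shows $\alpha_{k,N}\leq\alpha_{k-1,N}$, hence $\alpha_{k,N}/k<\alpha_{k-1,N}/(k-1)$. At $k=2$, $u_i\equiv c>0$ together with $g_iu_i\equiv 0$ already gives $u_j\equiv 0$, so no barrier is needed.

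\emph{The blow-down in Step 2.} You must pass to the limit in the Rayleigh quotients of the traces of $u_i$ themselves, including the term $r^2M\prod_{j\neq i}u_j^2$. Normalize these traces in $L^2(\S^{N-1})$, not by $r^\alpha$, which may produce trivial limits. The limits are then $k$-segregated by Fatou's lemma, because $\fint_{S_r}u_j^2$ is nondecreasing and eventually positive ($u_j^2$ is subharmonic). This is where nontriviality, rather than positivity, suffices, and the growth bound plays no role here; it is used only in Step 1. Bounding these quotients below by the ground-state eigenvalues of $-\Delta_{\S^{N-1}}+r^2M\prod_{j\neq i}u_j^2$, as you suggest, would lose this. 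Those eigenfunctions need not be $k$-segregated in the limit.
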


\begin{proof}
	If one of the components vanishes identically, there is nothing to prove.
	Otherwise, by the strong maximum principle, all the components are positive.
	The Alt--Caffarelli--Friedman monotonicity formula in
	\cite[Theorem 2.6]{SoTe1}, from which \cite[Theorem 3.6]{SoTe1} follows,
	also holds for positive \emph{subsolutions} to
	\[
	-\Delta u_i
	+
	u_i^{q_i}g_i(x,\hat{\mf u}_i)
	\leq0,
	\qquad
	u_i>0
	\quad\text{in }\R^N,
	\quad i=1,\dots,k,
	\]
	where $q_i\geq1$ and
	$g_i\in C(\R^N\times[0,+\infty)^{k-1},[0,+\infty))$
	satisfies assumptions {\rm (H1)} and {\rm (H2)} therein.
	The case $k=2$ was first proved in \cite[Lemma 2.5]{NoTaTeVe}.
	
	Choosing
	\[
	q_i=1,
	\qquad
	g_i(x,\hat{\mf u}_i)
	=
	M\prod_{j\neq i}u_j^2,
	\]
	the assumptions are readily verified, and the conclusion follows by the
	same argument as in \cite[Theorem 3.6]{SoTe1}.
\end{proof}

\begin{proof}[Proof of Corollary \ref{cor: liouville intere interazione}]
	Fix any $L\subseteq[d]$ with $|L|=k$ and $M_L>0$. For every $j\in L$,
	equation \eqref{eq:entire system liouville} gives
	\[
	-\Delta u_j
	\leq
	-M_Lu_j
	\prod_{\substack{\ell\in L\\ \ell\neq j}}u_\ell^2
	\quad\text{in }\R^N.
	\]
	By Theorem \ref{thm: liou sub}, at least one component $u_j$, $j\in L$,
	vanishes identically. Hence
	\[
	\prod_{j\in L}u_j\equiv0
	\qquad
	\text{for every }L\subseteq[d],\ |L|=k,\ M_L>0.
	\]
	
	It follows that every interaction term on the right-hand side of
	\eqref{eq:entire system liouville} vanishes identically: indeed, terms
	corresponding to $M_L=0$ vanish trivially, while, if $M_L>0$, the
	previous conclusion gives
	\[
	u_i\prod_{\ell\in L\setminus\{i\}}u_\ell^2\equiv0
	\qquad\text{for every }i\in L.
	\]
	Therefore $\Delta u_i=0$ in $\R^N$ for every $i=1,\dots,d$.
	Since the components are nonnegative, the classical Liouville theorem
	implies that they are all constant.
	
	Finally, if $M_L>0$ for every $L\subseteq[d]$ with $|L|=k$, every
	$k$-tuple contains a component which vanishes identically. Hence at most
	$k-1$ components can be nonzero, and therefore at least $d-k+1$
	components vanish identically.
\end{proof}

\section{Interior uniform H\"older bounds}\label{sec: thm regolarità interna}

This section is devoted to the proof of Theorem \ref{thm: regolarità interna}. The main step consists in establishing a local estimate on balls, from which the general statement follows by a standard covering and scaling argument. The proof proceeds by contradiction and blow-up. Although some of the ideas originate from the analysis developed in \cite{SoTe1}, the presence of an arbitrary number of components, higher-order interactions, and nonlinear forcing terms requires a much more involved approach.

By translation and scaling, it is enough to work under the assumption that
\[
B_4\subset\Omega.
\]
Fix $\alpha\in(0,\bar\nu)$. We claim that there exists a constant
$C=C(N,\alpha,k,d)>0$ such that, for every $\beta>0$,
\begin{equation}\label{eq:Linf alpha estimate on balls}
	\|\mf u_\b\|_{C^{0,\alpha}(B_1)}
	\leq
	C\left(
	\|\mf u_\b\|_{L^\infty(B_3)}
	+
	\sup\left\{
	|f_{i,\b}(x,u)|:
	\begin{array}{l}
		x\in B_3,\ 
		u\in[0,\sup_{B_3}u_{i,\b}],\\
		i=1,\dots,d
	\end{array}
	\right\}
	\right).
\end{equation}
For convenience, throughout this section we set
\[
R_\b
:=
\sup\left\{
|f_{i,\b}(x,u)|:
x\in B_3,\ 
u\in[0,\sup_{B_3}u_{i,\b}],\
i=1,\dots,d
\right\}.
\]

Let $\eta\in C_c^1(\R^N)$ be radially decreasing, with
\[
\eta\equiv1\quad\text{in }\overline{B_1},
\qquad
\eta\equiv0\quad\text{in }B_3\setminus B_2.
\]
It is sufficient to prove
\begin{equation}\label{eq:localized holder estimate}
	[\eta\mf u_\b]_{C^{0,\alpha}(B_2)}
	\leq
	C\left(
	\|\eta\mf u_\b\|_{L^\infty(B_2)}+R_\b
	\right).
\end{equation}


Suppose then, by contradiction, that \eqref{eq:localized holder estimate} is false. There exist a sequence $\{\b_n\}_n\subset (0,+\infty)$, a family $\{\bs f_\b\}_{\b>0}$, and corresponding solutions
\[
\mf u_n:=\mf u_{\b_n},
\]
satisfying \eqref{eq:local minimality}, such that
\[
\frac{
	[\eta\mf u_n]_{C^{0,\alpha}(B_2)}
}{
	\|\eta\mf u_n\|_{L^\infty(B_2)}+R_n
}
>n,
\qquad n\in\N,
\]
where $R_n:=R_{\b_n}$. Set
\begin{equation}\label{eq:L_n}
	L_n
	:=
	[\eta\mf u_n]_{C^{0,\alpha}(B_2)}
	>
	n\left(
	\|\eta\mf u_n\|_{L^\infty(B_2)}+R_n
	\right).
\end{equation}

For each fixed $n$, the components of $\mf u_n$ are $C^{1,\alpha}_{\loc}$ by Remark \ref{rmk: regularity beta fixed}. After extracting a subsequence and relabelling the components, we may therefore find $x_n,y_n\in\overline{B_2}$, $x_n\neq y_n$, such that
\[
L_n
=
\frac{
	|(\eta u_{1,n})(x_n)-(\eta u_{1,n})(y_n)|
}{
	|x_n-y_n|^\alpha
}.
\]
We may also assume that $x_n\in B_2$: indeed, the two maximizing points cannot both belong to $\partial B_2$, since $\eta=0$ there.

Let $r_n:=|x_n-y_n|$. Then, by \eqref{eq:L_n},
\[
r_n^\alpha
=
\frac{
	|(\eta u_{1,n})(x_n)-(\eta u_{1,n})(y_n)|
}{L_n}
\leq
\frac{
	2\|\eta\mf u_n\|_{L^\infty(B_2)}
}{L_n}
<
\frac{2}{n}\to 0,
\]
as $n\to\infty$.

\medskip

We now zoom in around $x_n$ at the scale $r_n$. Two different rescalings will be used throughout the argument:
\[
v_{i,n}(x)
:=
\eta(x_n)
\frac{
	u_{i,n}(x_n+r_nx)
}{
	L_nr_n^\alpha
},
\qquad
\bar v_{i,n}(x)
:=
\frac{
	(\eta u_{i,n})(x_n+r_nx)
}{
	L_nr_n^\alpha
}.
\]
They are defined, in particular, on
\[
\Omega_n
:=
\frac{B_4-x_n}{r_n}
\subset
\frac{\Omega-x_n}{r_n}.
\]
Since $x_n\in B_2$ and $r_n\to0$, we have
\[
B_{1/r_n}\subset\Omega_n
\]
for all sufficiently large $n$. Thus the rescaled domains exhaust $\R^N$. In what follows, whenever a statement concerning $\mf v_n$ or $\bar{\mf v}_n$ is made on a fixed bounded subset of $\R^N$, it is understood to hold for all sufficiently large $n$.

The reason for introducing both rescalings is that they retain different useful features of the original sequence. The normalization of the H\"older quotient is naturally inherited by $\bar{\mf v}_n$, whereas the equations satisfied by this sequence contain additional terms generated by the cut-off function. Conversely, $\mf v_n$ preserves the structure of the original system after rescaling, but its H\"older seminorm is not directly controlled by the normalization. Notice moreover that $v_{i,n}(0)=\bar v_{i,n}(0)$.
The first lemma establishes basic properties of these two sequences, allowing also to relate one to the other.

\begin{lemma}\label{lemma: basic prop}

Using the previous notation, we have that:
\begin{itemize}

\item[($i$)] the sequence $\{\bar{\mf{v}}_n\}$ has uniformly bounded $\alpha$-H\"older semi-norm in $\Omega_n$, and in particular
\[
\sup_{i=1,\dots,d} \sup_{\substack{x \neq y \\ x,y \in \Omega_n}}    \frac{ |\bar v_{i,n}(x)-\bar v_{i,n}(y)|}{|x-y|^{\alpha}} = \frac{ |\bar v_{1,n}(0)-\bar v_{1,n}\left(\frac{y_n-x_n}{r_n}\right)|}{\left|\frac{y_n-x_n}{r_n}\right|^{\alpha}} = 1
\]
for every $n$.
\item[($ii$)] $v_{i,n}$ is a weak solution of 
\begin{equation}\label{eq: eq satisfied by v_n}
-\Delta v_{i,n} = -M_n v_{i,n} \sum_{\substack{J\subseteq [d]\setminus \{i\}\\ |J|=k-1}} \gamma_{J,i} v_{J,n}^{2}+g_{i,n}(x,v_{i,n}), \quad v_{i,n}\geq 0 \qquad \text{in $\Omega_n$},
\end{equation}
where
\[
 M_n:= \beta_n r_n^{2+2(k-1)\alpha} \left(\frac{L_n}{\eta(x_n)}\right)^{2(k-1)} \quad \text{ and } \quad g_{i,n}(x,u):=\frac{\eta(x_n) r_n^{2-\alpha}}{L_n}f_{i,\b_n}\left(x_n+r_n x,\frac{r_n^\alpha L_n}{\eta(x_n)}u\right).
\] 
Furthermore, 
\begin{equation}\label{eq: g_n to 0}
	g_{i,n}(x,v_{i,n}(x))\to 0 \quad \text{locally uniformly as $n\to\infty$ for every $i=1,\dots,d$.}
\end{equation}
\item[($iii$)] for every compact set $K \subset \R^N$, we have 
\[
\sup_K |\mf{v}_n - \bar{\mf{v}}_n | \to 0 \qquad \text{as $n \to \infty$},
\]
\item[($iv$)] for every compact set $K\subset \R^N$ and for every $n$ so large that $\Omega_n \supset K$,
\[
|v_{i,n}(x) - v_{i,n}(y)| \le o_n(1) + |x-y|^\alpha \quad \text{for every $x, y \in K$},
\]
where $o_n(1) \to 0$ uniformly on $x,y \in K$; in particular $\{v_{i,n}\}$ has uniformly bounded oscillation in any compact set.
\item[($v$)] $\mf{v}_n$ is a minimizer of \eqref{eq: eq satisfied by v_n} with respect to variations with compact support, namely for every $\Omega' \ssubset \R^N$
\[
J_{M_n,\bs g_n}(\mf{v}_n,\Omega') \le J_{M_n,\bs g_n}(\mf{v}_n+ \bs{\varphi},\Omega') \quad \forall \bs{\varphi} \in H^1_0(\Omega', \R^d),
\]
for sufficiently large $n$, where $J_{M_n,\bs g_n}(\cdot, \Omega)$ is defined as in \eqref{eq:def J_beta} (with the natural primitive $G_{i,n}(x,u)=\int_0^u g_{i,n}(x,s)\, ds$ appearing in the functional).
\end{itemize}
\end{lemma}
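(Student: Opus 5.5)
The plan is to verify the five items in order; each is a direct computation from the definitions of $L_n$, $r_n$, $x_n$ and the two rescalings.

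\emph{Item ($i$).} This follows by a change of variables. For $x,y\in\Omega_n$ we have
\[
\frac{|\bar v_{i,n}(x)-\bar v_{i,n}(y)|}{|x-y|^\alpha}=\frac{|(\eta u_{i,n})(x_n+r_nx)-(\eta u_{i,n})(x_n+r_ny)|}{L_n\,|r_nx-r_ny|^\alpha}.
\]
Since $\eta$ vanishes outside $B_2$, the seminorm of $\eta\mf u_n$ over $B_4$ coincides with the one over $B_2$. Hence the quantity above is bounded by $[\eta\mf u_n]_{C^{0,\alpha}(B_2)}/L_n=1$. Equality holds at the pair $0$ and $(y_n-x_n)/r_n$ by the choice of $x_n,y_n$.

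\emph{Item ($ii$).} Set $\lambda_n:=L_nr_n^\alpha/\eta(x_n)$, so that $v_{i,n}(x)=u_{i,n}(x_n+r_nx)/\lambda_n$. Then
\[
-\Delta v_{i,n}=\frac{r_n^2}{\lambda_n}\bigl(-\Delta u_{i,n}\bigr)(x_n+r_n\cdot).
\]
Each interaction term becomes $\beta_nr_n^2\lambda_n^{2(k-1)}v_{i,n}v_{J,n}^2$. Since $\lambda_n^{2(k-1)}r_n^2=r_n^{2+2(k-1)\alpha}(L_n/\eta(x_n))^{2(k-1)}$, this gives exactly $M_n$. The forcing term becomes $(r_n^2/\lambda_n)f_{i,\b_n}(x_n+r_nx,\lambda_nv)$, which is $g_{i,n}$.

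For \eqref{eq: g_n to 0}, fix a compact set. For large $n$ the points $x_n+r_nx$ lie in $B_3$, and $u_{i,n}$ takes values in $[0,\sup_{B_3}u_{i,n}]$ there. Therefore
\[
|g_{i,n}(x,v_{i,n})|\le \frac{\eta(x_n)r_n^{2-\alpha}R_n}{L_n}\le \frac{r_n^{2-\alpha}}{n}\to0,
\]
using $R_n<L_n/n$ and $\eta\le1$.

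\emph{Item ($iii$).} This is the main point: we need $\eta(x_n)$ to be not too small relative to $r_n$. Write
\[
v_{i,n}(x)-\bar v_{i,n}(x)=\frac{\bigl(\eta(x_n)-\eta(x_n+r_nx)\bigr)\,u_{i,n}(x_n+r_nx)}{L_nr_n^\alpha}.
\]
If $\eta$ is Lipschitz, this is bounded in absolute value by
\[
\frac{\|\nabla\eta\|_\infty\, r_n|x|\; u_{i,n}(x_n+r_nx)}{L_nr_n^\alpha}.
\]
The trouble is that $u_{i,n}$ itself, not $\eta u_{i,n}$, appears, and $u_{i,n}$ is not controlled where $\eta$ is small.

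I would handle this as in \cite{SoTe1} and \cite{NoTaTeVe}. First observe that $|\eta(x_n)-\eta(x_n+r_nx)|\le Cr_n|x|$. Then split according to whether $\eta(x_n+r_nx)$ is comparable to $\eta(x_n)$.

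\emph{Estimate near the support boundary.} Choose $\eta$ so that $\sqrt\eta$ or a power of $\eta$ is Lipschitz. Write $u_{i,n}=\eta u_{i,n}/\eta$, where $\eta u_{i,n}\le\|\eta\mf u_n\|_\infty<L_n/n$. This gives a bound of order $\dfrac{r_n^{1-\alpha}|x|}{n\,\eta(x_n+r_nx)}$. To use it, I need $r_n/\eta(x_n)\to0$. I would get this from the H\"older quotient at $(x_n,y_n)$ together with an estimate $\eta u\le\eta^{\alpha}\cdot(\dots)$ near $\partial B_2$. Concretely, take the radial profile of $\eta$ vanishing to suitable order, use $L_nr_n^\alpha\le 2\|\eta u\|_\infty\le 2L_n/n$, and the fact that the maximizing pair forces $\eta(x_n)\gtrsim r_n$ times a large factor. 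This is the delicate step and I expect to adjust the cut-off to make it work.

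Once $r_n=o(\eta(x_n))$, item ($iii$) follows: $\eta$ is then comparable on $B_{Kr_n}(x_n)$, and the bound tends to $0$.

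\emph{Item ($iv$).} This follows by combining ($i$) and ($iii$) with the triangle inequality.

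\emph{Item ($v$).} This follows from \eqref{eq:local minimality} by the change of variables $\bs\varphi(x)=\lambda_n\bs\psi(x_n+r_nx)$. The energies scale as
\[
J_{M_n,\bs g_n}(\mf v_n+\bs\psi,\Omega')=\frac{r_n^{2-N}}{\lambda_n^2}\,J_{\b_n,\bs f_{\b_n}}\bigl(\mf u_n+\bs\varphi,\,x_n+r_n\Omega'\bigr),
\]
where the primitives satisfy $G_{i,n}(x,u)=\dfrac{r_n^2}{\lambda_n^2}F_{i,\b_n}(x_n+r_nx,\lambda_nu)$. Since the factor $r_n^{2-N}/\lambda_n^2$ is positive, the minimality inequality is preserved.
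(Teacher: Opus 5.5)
Your items ($i$), ($ii$), ($iv$) and ($v$) are correct and are the same change-of-variables computations the paper uses. Your remark that $[\eta\mf u_n]_{C^{0,\alpha}(B_4)}=[\eta\mf u_n]_{C^{0,\alpha}(B_2)}$ is a detail the paper leaves implicit.

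The gap is item ($iii$), which you do not prove. You only announce that $r_n=o(\eta(x_n))$ should follow after ``adjusting the cut-off'', and nothing in your argument produces it. From the maximizing pair you only get $L_nr_n^\alpha\le(\eta u_{1,n})(x_n)$ (labelling $x_n$ as the point with the larger value). That is, $\eta(x_n)\ge L_nr_n^\alpha/u_{1,n}(x_n)$, which again requires control of $u_{1,n}$ itself, the very quantity you identified as uncontrolled. Comparing with a zero of $\eta u_{1,n}$ on $\partial B_2$ only gives $\dist(x_n,\partial B_2)\ge r_n$.

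Even granting $r_n=o(\eta(x_n))$, your bound does not close. With $\eta(x_n+r_nx)\ge\eta(x_n)/2$ it reads $\lesssim\frac{r_n}{\eta(x_n)}\cdot\frac{r_n^{-\alpha}}{n}|x|$. The only available information, $r_n^\alpha<2/n$, bounds $r_n^{-\alpha}/n$ from below, not from above. For instance, $r_n=e^{-n}$ and $\eta(x_n)=e^{-(1-\alpha/2)n}$ give $r_n=o(\eta(x_n))$ while $r_n^{1-\alpha}/(n\eta(x_n))\to+\infty$. So the step ``once $r_n=o(\eta(x_n))$, item ($iii$) follows'' is invalid as written.

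The missing idea is simpler than modifying $\eta$: run the contradiction with $\|\mf u_n\|_{L^\infty(B_3)}$ in place of $\|\eta\mf u_n\|_{L^\infty(B_2)}$, i.e. assume $L_n>n(\|\mf u_n\|_{L^\infty(B_3)}+R_n)$. This change costs nothing:
\begin{itemize}
\item it still yields \eqref{eq:Linf alpha estimate on balls}, since $\eta\equiv1$ on $B_1$;
\item it still gives $r_n^\alpha<2/n$, since $\|\eta\mf u_n\|_{L^\infty(B_2)}\le\|\mf u_n\|_{L^\infty(B_3)}$;
\item it leaves ($ii$) untouched.
\end{itemize}
Let $l$ be the Lipschitz constant of $\eta$, and note $x_n+r_nK\subset B_3$ for large $n$. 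Then
\[
|v_{i,n}(x)-\bar v_{i,n}(x)|=\frac{u_{i,n}(x_n+r_nx)}{L_nr_n^\alpha}\,|\eta(x_n)-\eta(x_n+r_nx)|\le\frac{\|\mf u_n\|_{L^\infty(B_3)}}{L_n}\,l\,r_n^{1-\alpha}|x|\le\frac{l\,r_n^{1-\alpha}|x|}{n}\to0
\]
uniformly on $K$, with no lower bound on $\eta(x_n)$ needed. This is the paper's one-line estimate. Note that it implicitly needs $\sup_{B_3}u_{i,n}\lesssim L_n/n$, which \eqref{eq:L_n} as literally written (with $\|\eta\mf u_n\|_{L^\infty(B_2)}$) does not provide. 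So your suspicion about $u_{i,n}$ versus $\eta u_{i,n}$ was well founded, but the cure is this normalization, not a special cut-off.
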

\begin{proof}
The proof of ($i$), \eqref{eq: eq satisfied by v_n} and ($v$) is a simple change of variables. To show \eqref{eq: g_n to 0}, observe that for every $K$ compact and $n$ large enough, $x_n+r_n K \subset B_3$, so for any $x\in K$ 
\begin{align*}
|g_{i,n}(x,v_{i,n}(x))|&=\frac{\eta(x_n) r_n^{2-\alpha}}{L_n}\left|f_{i,\b_n}\left(x_n+r_n x,u_{i,n}(x_n+r_n x)\right)\right|\\ 
&\leq \frac{\eta(x_n) r_n^{2-\alpha} R_n}{L_n}\leq \frac{\eta(x_n)r_n^{2-\alpha}}{n}\to 0 \quad \text{as } n\to\infty, 
\end{align*}
where in the last inequality we used \eqref{eq:L_n}.
As far as ($iii$) is concerned, since $\eta$ is globally Lipschitz continuous with constant denoted by $l$ and using \eqref{eq:L_n} again, we have for every $x\in K$ and $n$ large enough,
\[
|v_{i,n}(x) - \bar v_{i,n}(x)|= \frac{|u_{i,n}(x_n+r_n x)|}{L_n r_n^{\alpha}} |\eta(x_n)-\eta (x_n+r_n x)| \le \frac{l C r_n^{1-\alpha}}{n} |x|.
\]
Now, for any $x,y\in K$,
\begin{align*}
		|v_{i,n}(x)-v_{i,n}(y)|&=|v_{i,n}(x)-\bar v_{i,n}(x)+\bar v_{i,n}(x)-\bar v_{i,n}(y)+\bar v_{i,n}(y)-v_{i,n}(y)| \\
		&\leq 2\sup_K |\mf v_n-\bar{\mf v}_n|+|\bar v_{i,n}(x)-\bar v_{i,n}(y)|,	
\end{align*}
so ($iv$) follows by ($iii$) and ($i$).
\end{proof}

In the next statement we collect some useful properties that hold when one component is bounded at $0$.

\begin{lemma}\label{lemma: basic convergence when v(0) bounded}
Suppose that $\{v_{i,n}(0)\}$ is bounded. Then there exists a function $v_i \in C^{0}(\R^N) \cap H^1_{\loc}(\R^N)$, globally $\alpha$-H\"older continuous, such that, up to a subsequence: 
\begin{itemize}
\item[($i$)] $v_{i,n}, \bar v_{i,n} \to v_i$ 
locally uniformly on $\R^N$; 
\item[($ii$)] $v_i$ is non-constant in $B_1$ if $i=1$;
\item[($iii$)] for every $K \subset \R^N$ compact there exists $C>0$ such that
\[
M_n \int_{K} \Bigg(\sum_{\substack{J\subseteq [d]\setminus \{i\} \\ |J|=k-1}}\gamma_{J,i} v_{J,n}^2 \Bigg) v_{i,n}\,dx \le C
\]
\item[($iv$)] $v_{i,n} \to v_i$ strongly in $H^1_{\loc}(\R^N)$.
\end{itemize}
\end{lemma}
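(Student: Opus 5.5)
We combine Arzelà--Ascoli with the equation \eqref{eq: eq satisfied by v_n}, tested against cut-off functions.

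\emph{Step 1 (items (i) and (ii)).} Since $\{v_{i,n}(0)\}$ is bounded, Lemma \ref{lemma: basic prop}(iv) gives uniform bounds on every compact set $K$. Indeed, $|v_{i,n}(x)|\le |v_{i,n}(0)|+o_n(1)+|x|^\alpha$, and the oscillation of $v_{i,n}$ is controlled up to $o_n(1)$ by $|x-y|^\alpha$. The functions $\bar v_{i,n}$ are genuinely equi-H\"older by Lemma \ref{lemma: basic prop}(i), and they are locally bounded by (iii). Arzelà--Ascoli and a diagonal argument then give $\bar v_{i,n}\to v_i$ locally uniformly, and by (iii) also $v_{i,n}\to v_i$ locally uniformly. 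Passing to the limit in the H\"older quotient of $\bar v_{i,n}$ gives $[v_i]_{C^{0,\alpha}(\R^N)}\le1$. For (ii), we set $z_n:=(y_n-x_n)/r_n\in S_1$ and extract $z_n\to z\in S_1$. Lemma \ref{lemma: basic prop}(i) gives $|\bar v_{1,n}(0)-\bar v_{1,n}(z_n)|=1$, and uniform convergence plus equicontinuity yield $|v_1(0)-v_1(z)|=1$. Hence $v_1$ is non-constant on $\overline{B_1}$, and by continuity it is non-constant in $B_1$.

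\emph{Step 2 (item (iii)).} We fix $K$, take a cut-off $\varphi\in C^\infty_c(\R^N)$ with $\varphi\ge0$ and $\varphi\equiv1$ on $K$, and test the equation of $v_{i,n}$ with $\varphi$. Since $v_{i,n}\ge0$,
\[
M_n\int \varphi\, v_{i,n}\sum_J\gamma_{J,i}v_{J,n}^2
=\int v_{i,n}\Delta\varphi+\int g_{i,n}(x,v_{i,n})\varphi .
\]
The right-hand side is bounded: $v_{i,n}$ is uniformly bounded on $\supp\varphi$ by Step 1, and $g_{i,n}(x,v_{i,n})\to0$ locally uniformly by \eqref{eq: g_n to 0}. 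Using $\varphi\ge\chi_K$ then gives (iii).

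\emph{Step 3 (item (iv)).} This is the main step. We first obtain $H^1_{\loc}$ bounds by testing with $v_{i,n}\varphi^2$. This gives
\[
\int|\nabla v_{i,n}|^2\varphi^2\le C\int v_{i,n}^2|\nabla\varphi|^2+C\int|g_{i,n}|v_{i,n}\varphi^2 ,
\]
where we dropped the nonpositive interaction term. Hence $v_{i,n}\rightharpoonup v_i$ weakly in $H^1_{\loc}$, so $v_i\in H^1_{\loc}$ and the limsup is the only thing left to control.

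For strong convergence we test with $(v_{i,n}-v_i)\varphi$, with $\varphi$ a cut-off. The gradient cross terms converge to zero by weak convergence together with $v_{i,n}-v_i\to0$ uniformly, and the $g_{i,n}$ term tends to zero. The interaction term is bounded by
\[
M_n\int\varphi\, v_{i,n}\sum_J\gamma_{J,i}v_{J,n}^2\,|v_{i,n}-v_i|\le C\sup_{\supp\varphi}|v_{i,n}-v_i|\to0,
\]
using (iii) with $K=\supp\varphi$; this is where we need that bound. Therefore $\int\varphi\,\nabla v_{i,n}\cdot\nabla(v_{i,n}-v_i)\to0$, which together with weak convergence gives $\int\varphi|\nabla(v_{i,n}-v_i)|^2\to0$. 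This proves strong $H^1_{\loc}$ convergence.

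The argument never uses the boundedness of the other components. It relies only on $v_{i,n}\ge0$, on the sign of the interaction term, and on the uniform convergence of $v_{i,n}$ itself.
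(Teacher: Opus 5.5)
Your proposal is correct and follows essentially the same route as the paper: Ascoli--Arzel\`a for (i)--(ii), testing the equation with a nonnegative cut-off for (iii), and testing first with $v_{i,n}\varphi^2$ and then with $(v_{i,n}-v_i)\varphi$ for (iv), where (iii) controls the interaction term. Your treatment of (ii), via $z_n=(y_n-x_n)/r_n\to z\in S_1$ and $|v_1(0)-v_1(z)|=1$, makes explicit a step the paper leaves implicit.
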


\begin{proof}
Point ($i)$ follows directly from Ascoli-Arzel\`a theorem and Lemma \ref{lemma: basic prop}. Point ($ii$) is a consequence of Lemma \ref{lemma: basic prop} $(i)$ and the local uniform convergence of $\bar v_{1,n}$ to $v_{1}$. Concerning point ($iii$), let us test the equation of $v_{i,n}$ with a cut-off function $\varphi \in C^\infty_c(\R^N)$. We obtain
\[
M_n \int_{\Omega_n} \Bigg(\sum_{\substack{J\subseteq [d]\setminus \{i\} \\ |J|=k-1}}\gamma_{J,i} v_{J,n}^2 \Bigg) v_{i,n} \varphi \,dx = \int_{\Omega_n} \left(v_{i,n} \Delta \varphi+g_{i,n}(x,v_{i,n}(x))\varphi\right) \,dx,
\]
whence the desired estimate follows (recall that $g_{i,n}\to 0$ locally uniformly). Finally, for point ($iv$), let us test the equation of $v_{i,n}$ with $v_{i,n} \varphi^2$, where $\varphi \in C^\infty_c(\R^N)$ is an arbitrary nonnegative cut-off function. We obtain
\[
\int_{\Omega_n} \Big(|\nabla v_{i,n}|^2 \varphi^2 +2v_{i,n} \varphi \nabla v_{i,n}\cdot \nabla \varphi +M_n \varphi^2 \sum_{\substack{J\subseteq [d] \\ |J|=k}} \gamma_J v_{J,n}^2 - g_{i,n}(x,v_{i,n}(x))v_{i,n}\varphi^2 \Big) dx = 0.
\]
This implies that
\[
\int_{\Omega_n} |\nabla v_{i,n}|^2 \varphi^2\,dx \le 4 \int_{\Omega_n}  v_{i,n}^2 |\nabla \varphi|^2\,dx+2\int_{\Omega_n} g_{i,n}(x,v_{i,n}(x))v_{i,n}\varphi^2 \, dx,
\]
which entails the boundedness of $\{v_{i,n}\}$ in $H^1_{\loc}(\R^N)$, and hence up to a subsequence $v_{i,n} \weak v_i$ weakly in $H^1_{\loc}(\R^N)$. To show the strong convergence, we test the equation of $v_{i,n}$ with $(v_{i,n}-v_i) \varphi$, where $\varphi \in C^\infty_c(\R^N)$ is an arbitrary nonnegative cut-off function, and observe that
\begin{multline*}
	\left|\int_{\Omega_n} \varphi \nabla v_{i,n} \cdot \nabla (v_{i,n}-v_i)  \, dx \right|\\
	\le \|v_{i,n}-v_i\|_{L^\infty(\mathrm{supp}(\varphi))} \int_{\Omega_n} \Big( |\nabla v_{i,n} \cdot \nabla \varphi| + M_n \varphi \, v_{i,n} \sum_{\substack{J\subseteq [d]\setminus \{i\} \\ |J|=k-1}} \gamma_J v_{J,n}^2 + |g_{i,n}(x,v_{i,n})| \varphi\Big)dx.
\end{multline*}
The previous points ensure that the right hand side tends to $0$ as $n \to \infty$. Therefore
\[
\lim_n \int_{\Omega_n} \varphi \left( |\nabla v_{i,n}|^2- |\nabla v_i|^2\right) dx =\lim_{n} \int_{\Omega_n} \varphi \nabla v_{i,n} \cdot \nabla (v_{i,n}-v_i)\,dx =0,
\]
that by the arbitrariness of $\varphi$ implies the convergence of the $L^2$ norms of the gradients on every compact subset, and this together with weak $H^1_{\loc}$ convergence gives strong $H^1_{\loc}$ convergence.
\end{proof}

The previous lemma shows that the boundedness of $\{v_{i,n}(0)\}$ entails several consequences. On the other hand, we do not know whether this property holds or not. If not, we will often use the following property for a suitably normalized sequence.

\begin{lemma}\label{lemma: properties w_i}
Let 
\[
\mf{w}_n(x):= \mf{v}_n(x)-\mf{v}_n(0), \qquad \bar{\mf{w}}_n(x):= \bar{\mf{v}}_n(x)-\bar{\mf{v}}_n(0).
\]
Then:
\begin{itemize}
	\item[$(i)$] there exists a function $\mf{w}$, globally $\alpha$-H\"older continuous in $\R^N$, such that $\mf{w}_n, \bar{\mf{w}}_n \to \mf{w}$ locally uniformly in $\R^N$. Moreover, $w_1$ is non-constant.
	\item[$(ii)$] if in addition $\Delta w_{i,n} \to \lambda$ in $L^1_{\loc}(\R^N)$ for an index $i$ and some $\lambda\geq 0$, then $w_{i,n} \to w_i$ strongly in $H^1_{\loc}(\R^N)$ and $\Delta w_i = \lambda$.
\end{itemize}

\end{lemma}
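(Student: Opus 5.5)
For part (i), I would argue componentwise from Lemma \ref{lemma: basic prop}. Since $\bar w_{i,n}(0)=0$ and, by Lemma \ref{lemma: basic prop} (i), $[\bar w_{i,n}]_{C^{0,\alpha}(\Omega_n)}\le 1$, we get $|\bar w_{i,n}(x)|\le |x|^\alpha$. Hence the sequence is locally uniformly bounded and equicontinuous. Ascoli--Arzel\`a, together with a diagonal argument over the exhausting balls $B_R$, gives a subsequence and a limit $\mf w$ with $\bar{\mf w}_n\to\mf w$ locally uniformly. Passing to the limit in $|\bar w_{i,n}(x)-\bar w_{i,n}(y)|\le|x-y|^\alpha$ shows $[\mf w]_{C^{0,\alpha}(\R^N)}\le1$. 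Since $v_{i,n}(0)=\bar v_{i,n}(0)$, we have $w_{i,n}-\bar w_{i,n}=v_{i,n}-\bar v_{i,n}$, which tends to $0$ uniformly on compacts by Lemma \ref{lemma: basic prop} (iii). So $\mf w_n\to\mf w$ locally uniformly as well.

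For non-constancy of $w_1$, set $z_n:=(y_n-x_n)/r_n\in\S^{N-1}$ and pass to a subsequence with $z_n\to z\in\S^{N-1}$. By Lemma \ref{lemma: basic prop} (i), $|\bar w_{1,n}(z_n)|=|\bar v_{1,n}(z_n)-\bar v_{1,n}(0)|=1$. Uniform convergence on $\overline{B_1}$ and equicontinuity then give $|w_1(z)-w_1(0)|=\lim_n|\bar w_{1,n}(z_n)|=1$, so $w_1$ is not constant.

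For part (ii), fix $i$ with $\Delta w_{i,n}\to\lambda$ in $L^1_{\loc}$, and write $h_n:=\Delta w_{i,n}=\Delta v_{i,n}$. By \eqref{eq: eq satisfied by v_n}, $h_n=M_nv_{i,n}\sum\gamma_{J,i}v_{J,n}^2-g_{i,n}$ is a function, locally bounded for each fixed $n$. Take a nonnegative cut-off $\varphi\in C_c^\infty(\R^N)$ and test with $w_{i,n}\varphi^2$:
\[
\int|\nabla w_{i,n}|^2\varphi^2=-2\int w_{i,n}\varphi\nabla w_{i,n}\cdot\nabla\varphi-\int h_nw_{i,n}\varphi^2 .
\]
The $w_{i,n}$ are locally uniformly bounded and $h_n$ is bounded in $L^1_{\loc}$. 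Young's inequality therefore gives $\int|\nabla w_{i,n}|^2\varphi^2\le C$, so $w_{i,n}\weak w_i$ weakly in $H^1_{\loc}$.

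To upgrade to strong convergence, test with $(w_{i,n}-w_i)\varphi$:
\[
\int\varphi\nabla w_{i,n}\cdot\nabla(w_{i,n}-w_i)=-\int(w_{i,n}-w_i)\nabla w_{i,n}\cdot\nabla\varphi-\int h_n(w_{i,n}-w_i)\varphi .
\]
Both terms on the right are bounded by $\|w_{i,n}-w_i\|_{L^\infty(\supp\varphi)}$ times a uniformly bounded quantity (an $H^1$ norm and an $L^1$ norm of $h_n$, respectively), so they tend to $0$. Weak convergence gives $\int\varphi\nabla w_i\cdot\nabla(w_{i,n}-w_i)\to0$. Subtracting, $\int\varphi|\nabla(w_{i,n}-w_i)|^2\to0$, which is strong $H^1_{\loc}$ convergence. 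Finally, for $\phi\in C_c^\infty$,
\[
\int w_i\Delta\phi=\lim_n\int w_{i,n}\Delta\phi=\lim_n\int h_n\phi=\lambda\int\phi ,
\]
so $\Delta w_i=\lambda$ in the distributional sense, and in fact weakly since $w_i\in H^1_{\loc}$.

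The main obstacle is the bookkeeping in the testing step. Testing the equation against $H^1$ functions with only $h_n\in L^1_{\loc}$ is legitimate because the test functions are $L^\infty$ and, for fixed $n$, $v_{i,n}\in C^{1,\alpha}_{\loc}$. All estimates must depend only on uniform bounds, namely the $L^\infty$ bound on compacts from (i) and the $L^1_{\loc}$ bound on $h_n$; no bound on $v_{i,n}(0)$ may enter.
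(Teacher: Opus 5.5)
Your proposal is correct and follows essentially the paper's route. For part $(i)$ you use Ascoli--Arzel\`a together with Lemma~\ref{lemma: basic prop}~$(iii)$, and for part $(ii)$ you test with $w_{i,n}\varphi^2$ and then with $(w_{i,n}-w_i)\varphi$; the only minor differences (bounding the right-hand side directly by the $L^1_{\loc}$ norm of $\Delta w_{i,n}$ instead of via the interaction estimate \eqref{eq:2}, and spelling out non-constancy of $w_1$ through $z_n\to z\in\S^{N-1}$) are both valid.
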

\begin{proof}
$(i)$ The local uniform convergence $\bar{\mf{w}}_n \to \mf{w}$ follows directly by the Ascoli-Arzel\`a theorem, the uniform bound on the $C^{0,\alpha}$ seminorm of $\bar{\mf{w}}_n$ and the fact that $\mf w_n(0)=\bar{\mf w}_n(0)=\mf 0$. Moreover the $\alpha$-H\"older seminorm of $\mf{w}$ is equal to $1$, and $w_1$ is non-constant. The fact that also $\mf{w}_n$ converges to $\mf{w}$ is a consequence of Lemma \ref{lemma: basic prop}-($iii$).

\medskip
$(ii)$ We first check that $\{w_{i,n}\}$ is bounded in $H^1(B_r)$: for any $\varphi \in C^\infty_c(B_{2r})$, we have
\[
\int_{B_{2r}} |\nabla w_{i,n}|^2 \varphi^2\,dx = - 2 \int_{B_{2r}} \varphi w_{i,n} \nabla w_{i,n} \cdot \nabla \varphi\,dx -  \int_{B_{2r}} w_{i,n} \varphi^2 \Delta w_{i,n}\,dx,
\]
whence
\[
\int_{B_{2r}} |\nabla w_{i,n}|^2 \varphi^2\,dx \le C  \int_{B_r}  w_{i,n}^2 |\nabla \varphi|^2\,dx + C \|w_{i,n}\|_{L^\infty({B_r})} \|\Delta w_{i,n}\|_{L^1({B_r})},
\]
which gives the boundedness in $H^1({B_r})$ by local uniform convergence of $\{w_{i,n}\}$ and $L^1_{\loc}$ convergence of $\Delta w_{i,n}$. By arbitrariness of $r$, up to a subsequence $w_{i,n} \weak w_i$ weakly in $H^1_{\loc}(\R^N)$. 
Notice that $\Delta w_{i,n}=\Delta v_{i,n}$ and for any $\varphi\in C^\infty_c(\R^N)$ we have, multiplying the equation of $v_{i,n}$ by $\varphi$,
\begin{equation}\label{eq:2}
	\int_{\Omega_n} \Big(M_n \varphi v_{i,n} \sum_{\substack{J\subseteq [d]\setminus \{i\} \\ |J|=k-1}} \gamma_{J,i} v_{J,n}^2\Big)\,dx = \int_{\Omega_n} \left(\Delta w_{i,n} + g_{i,n}(x,v_{i,n})\right) \varphi \, dx\leq C,
\end{equation}
where $C$ is independent of $n$. Then, multiplying the equation of $w_{i,n}$ by $(w_{i,n}-w_i)\varphi$, where $\varphi\in C^\infty_c(\R^N)$ is an arbitrary nonnegative cut-off function, we obtain 
\[
\begin{split}
\left|\int_{\Omega_n} \varphi \nabla w_{i,n} \cdot \nabla (w_{i,n}-w_i)  \, dx\right| &\le \|w_{i,n}-w_i\|_{L^\infty(\mathrm{supp}(\varphi))} \int_{\Omega_n} |\nabla w_{i,n} \cdot \nabla \varphi| \, dx \\
&+\|w_{i,n}-w_i\|_{L^\infty(\mathrm{supp}(\varphi))} \int_{\Omega_n} \Big(M_n \varphi \, v_{i,n} \sum_{\substack{J\subseteq [d]\setminus \{i\} \\ |J|=k-1}} \gamma_{J,i} v_{J,n}^2+|g_{i,n}(x,v_{i,n})|\varphi \Big)dx,
\end{split}
\]
where the right hand side goes to $0$ because of the convergence $w_{i,n} \to w_i$ in $L^\infty_{\loc}(\R^N)$ and \eqref{eq:2}. Combined with the weak convergence, this proves strong convergence in $H^1_{\loc}(\R^N)$.
The fact that $\Delta w_i = \lambda$ follows now by taking the limit in the equation 
\[
\int_{\Omega_n} \nabla w_{i,n}\cdot \nabla \varphi\,dx = \int_{\Omega_n} (-\Delta w_{i,n})\varphi\,dx  \quad \forall \varphi \in C^\infty_c(\R^N). \qedhere
\]

\end{proof}

We now proceed with the analysis of the possible asymptotic behaviours of the components at the origin. A key novelty of our proof with respect to the existing literature lies precisely in the strategy adopted at this stage. Indeed, when both the order of the interaction $k$ and the number of components $d$ are arbitrary, the number of possible asymptotic configurations grows considerably, making a direct case-by-case analysis impractical. To overcome this difficulty, we group the components according to their asymptotic behaviour and develop a systematic argument based on the cardinalities of the resulting classes.

More precisely, after passing to a subsequence, we partition the set of components according to whether their value at the origin remains bounded or diverges, and define
\begin{align*}
	I_{\infty}
	&:=
	\left\{
	i\in[d]:
	v_{i,n}(0)\to+\infty
	\right\},\\
	I_F
	&:=
	[d]\setminus I_{\infty}
	=
	\left\{
	i\in[d]:
	\{v_{i,n}(0)\}_n \text{ is bounded}
	\right\}.
\end{align*}
We will analyze the possible cardinalities of these sets and, relying on the rigidity and Liouville-type results established in the previous sections together with the minimality properties of the original sequence, rule them out one by one, eventually reaching the desired contradiction.

\medskip

The following lemma provides a first useful control of the interaction term in the presence of an unbounded component.
\begin{lemma}\label{lemma: 1 divergent implies integral bound}
Suppose that up to a subsequence $v_{i,n}(0) \to +\infty$. Then for every $r>0$ there exists $C=C(r)>0$ such that 
\[
M_n v_{i,n}(0) \int_{B_r} \sum_{\substack{J\subseteq [d]\setminus \{i\} \\ |J|=k-1}} \gamma_{J,i} v_{J,n}^2\,dx \le C.
\] 
\end{lemma}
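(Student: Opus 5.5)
We plan to test the equation \eqref{eq: eq satisfied by v_n} for $v_{i,n}$ against a fixed nonnegative cut-off $\varphi\in C^\infty_c(B_{2r})$ with $\varphi\equiv1$ on $B_r$. Then we replace $v_{i,n}$ by its value at the origin, using the uniform control on its oscillation from Lemma \ref{lemma: basic prop}-($iv$). Write $S_n(x):=\sum_{J\subseteq[d]\setminus\{i\},\,|J|=k-1}\gamma_{J,i}v_{J,n}^2\ge0$. Testing gives
\[
M_n\int_{B_{2r}} v_{i,n}S_n\varphi\,dx=\int_{B_{2r}}\Big(v_{i,n}\Delta\varphi+g_{i,n}(x,v_{i,n})\varphi\Big)dx .
\]
The term with $g_{i,n}$ is bounded by \eqref{eq: g_n to 0}. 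The term $\int v_{i,n}\Delta\varphi$ is not directly bounded, because $v_{i,n}(0)\to+\infty$. We therefore write $v_{i,n}=v_{i,n}(0)+w_{i,n}$. Since $\int\Delta\varphi=0$, we get
\[
\int v_{i,n}\Delta\varphi=\int w_{i,n}\Delta\varphi,
\]
and this is bounded because $\sup_{B_{2r}}|w_{i,n}|\le o_n(1)+(2r)^\alpha$ by Lemma \ref{lemma: basic prop}-($iv$), or by Lemma \ref{lemma: properties w_i}-($i$). Hence $M_n\int_{B_{2r}} v_{i,n}S_n\varphi\,dx\le C(r)$.

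Next we convert this into the desired bound with $v_{i,n}(0)$ in front. By Lemma \ref{lemma: basic prop}-($iv$), on $B_{2r}$ we have
\[
v_{i,n}(x)\ge v_{i,n}(0)-o_n(1)-(2r)^\alpha\ge \tfrac12 v_{i,n}(0)
\]
for $n$ large, since $v_{i,n}(0)\to+\infty$. Using $\varphi\equiv1$ on $B_r$ and $S_n\ge0$, we obtain
\[
\tfrac12 M_n v_{i,n}(0)\int_{B_r}S_n\,dx\le M_n\int_{B_{2r}}v_{i,n}S_n\varphi\,dx\le C(r)
\]
for $n\ge n_0(r)$. For the finitely many remaining indices, each quantity is finite, since $\mf v_n$ is continuous on $\overline{B_r}\subset\Omega_n$. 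Enlarging $C$ (or discarding these indices along the subsequence) gives the claim.

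The argument is routine. The only delicate point is that the naive bound on $\int v_{i,n}\Delta\varphi$ blows up. The key step is to subtract the constant $v_{i,n}(0)$, using $\int\Delta\varphi=0$, and to exploit the uniformly bounded oscillation. No minimality is needed here, only the equation and the local uniform vanishing of $g_{i,n}$.
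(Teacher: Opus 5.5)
Your argument is correct, but it takes a different and shorter route than the paper.

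The paper's route uses the Almgren-type quantities $H_{i,n}(r)=r^{1-N}\int_{S_r}v_{i,n}^2\,d\sigma$ and $E_{i,n}(r)$. Effectively it tests the equation with $v_{i,n}$ itself on $B_s$ to get $H_{i,n}'=\frac{2}{r}E_{i,n}-\dots$, then integrates over $s\in(r/2,r)$. It bounds $H_{i,n}(r)-H_{i,n}(r/2)$ from above by $C(r)(v_{i,n}(0)+1)$, using the bounded oscillation through a difference of squares. It bounds the right-hand side from below by $C(r)M_n v_{i,n}^2(0)\int_{B_{r/2}}\sum\gamma_{J,i}v_{J,n}^2$, using $v_{i,n}/v_{i,n}(0)\to 1$, and finally divides by $v_{i,n}(0)$.

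Your route instead uses the linear identity obtained by testing with a fixed cut-off. This is exactly the identity the paper uses in Lemma \ref{lemma: basic convergence when v(0) bounded}-($iii$) when $v_{i,n}(0)$ is bounded. Your observation that $\int\Delta\varphi=0$ lets you replace $v_{i,n}$ by $v_{i,n}-v_{i,n}(0)$, which is locally bounded. As a result, the bounded and divergent cases are handled by one identity, and the intermediate bound $M_n\int v_{i,n}S_n\varphi\le C(r)$ holds in either case.

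What your approach buys is simplicity. It avoids differentiating spherical integrals and the quadratic energy bookkeeping, and the final step ($v_{i,n}\ge\frac12 v_{i,n}(0)$ on $B_{2r}$) is immediate. The paper's energy-based version is the one inherited from the ternary analysis in \cite{SoTe1}. It also carries the Dirichlet energy inside $E_{i,n}$, although that extra information is discarded here.

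Both proofs use exactly the same inputs: the equation, \eqref{eq: g_n to 0}, and Lemma \ref{lemma: basic prop}-($iv$). Neither uses minimality.
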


\begin{proof}
We define
\[
\begin{split}
H_{i,n}(r) &:= \frac1{r^{N-1}} \int_{S_r} v_{i,n}^2\,d\sigma, \\
E_{i,n}(r) & := \frac1{r^{N-2}} \int_{B_r} \Big( |\nabla v_{i,n}|^2 + M_n \sum_{\substack{J\subseteq [d] \\ |J|=k \\ i\in J}} \gamma_J v_{J,n}^2\Big)\,dx.
\end{split}
\] 
By multiplying the equation of $v_{i,n}$ by $v_{i,n}$, and integrating on $B_r$, we see that
\[
\int_{B_r} \Big(|\nabla v_{i,n}|^2 + M_n \sum_{\substack{J\subseteq [d] \\ |J|=k \\ i\in J}} \gamma_J v_{J,n}^2-g_{i,n}(x,v_{i,n}(x))v_{i,n}\Big)dx = \int_{S_r} v_{i,n} \pa_\nu v_{i,n}\,d\sigma,
\]
whence
\[
H_{i,n}'(r) = \frac2{r} E_{i,n}(r)-\frac{2}{r^{N-1}}\int_{B_r}g_{i,n}(x,v_{i,n}(x))v_{i,n}\, dx.
\] 
Therefore, integrating from $r/2$ to $r$ we obtain
\begin{equation}\label{H_i' e E_i}
H_{i,n}(r) - H_{i,n}\left(\frac{r}{2}\right) + \int_{r/2}^{r}\dfrac{2}{s^{N-1}}\left(\int_{B_s} g_{i,n}(x,v_{i,n}(x))v_{i,n}\, dx\right)ds = \int_{r/2}^r \frac2{s} E_{i,n}(s)\,ds.
\end{equation}
Now, the left hand side can be estimated from above thanks to properties ($i$),($ii$) and ($iv$) in Lemma \ref{lemma: basic prop}:
\begin{multline*}
	H_{i,n}(r) - H_{i,n}\left(\frac{r}{2}\right) + \int_{r/2}^{r}\dfrac{2}{s^{N-1}}\left(\int_{B_s} g_{i,n}(x,v_{i,n}(x))v_{i,n}\, dx\right)ds \\
	\leq \int_{S_1} \left( v_{i,n}^2(ry)- v_{i,n}^2 \left(\frac{r}{2}y\right)\right)d\sigma(y) + C_1(r)\int_{B_r} g_{i,n}(x,v_{i,n}(x))v_{i,n}\\
	\leq C_2(r) \left( v_{i,n}(0)+1\right) \le C_3(r) v_{i,n}(0),
\end{multline*}
since $v_{i,n}(0)\to+\infty$. Moreover, the right hand side of \eqref{H_i' e E_i} can be estimated from below as follows:
\[
\begin{split}
\int_{r/2}^r \frac2{s} E_{i,n}(s)\,ds & \ge \frac{r}{2} \min_{s \in \left[\frac{r}{2},r\right]} \frac{2}{s} E_{i,n}(s) \\
& \ge r M_n \min_{s \in \left[\frac{r}{2},r\right]} \frac{1}{s^{N-1}} \int_{B_s} \sum_{\substack{J\subseteq [d] \\ |J|=k \\ i\in J}} \gamma_J v_{J,n}^2\,dx\\
& \ge C_4(r) M_n v_{i,n}^2(0) \int_{B_{r/2}} \frac{v_{i,n}^2}{v_{i,n}^2(0)} \sum_{\substack{J\subseteq [d]\setminus \{i\} \\ |J|=k-1 }} \gamma_{J,i} v_{J,n}^2\,dx.
\end{split}
\]
Since $v_{i,n}(0) \to +\infty$ and the oscillation of $v_{i,n}$ is locally uniformly bounded, we have that $v_{i,n}/v_{i,n}(0) \to 1$ uniformly on $B_{r/2}$ as $n \to \infty$. Hence, coming back to \eqref{H_i' e E_i}, we finally infer that 
\[
M_n v_{i,n}^2(0) \int_{B_{r/2}} \sum_{\substack{J\subseteq [d]\setminus \{i\} \\ |J|=k-1 }} \gamma_{J,i} v_{J,n}^2\,dx \le C_5(r) v_{i,n}(0),
\]
whence the thesis follows.
\end{proof}

\begin{lemma}\label{lemma: not d divergent}
It is not possible that $v_{i,n}(0) \to +\infty$ for every $i=1,\dots,d$.
\end{lemma}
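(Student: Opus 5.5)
The plan is to argue by contradiction and reduce to the one-component Liouville theorem, Proposition \ref{thm: Liouville 1 comp} ($ii$), applied to the limit $w_1$ from Lemma \ref{lemma: properties w_i}. So assume that $v_{i,n}(0)\to+\infty$ for every $i=1,\dots,d$.

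\emph{Step 1: the components are asymptotically constant.} By Lemma \ref{lemma: basic prop} ($iv$), each $v_{i,n}$ has uniformly bounded oscillation on every compact $K\subset\R^N$. Since $v_{i,n}(0)\to+\infty$, this gives
\[
\frac{v_{i,n}}{v_{i,n}(0)}\to1 \qquad\text{uniformly on } K,\ \text{for every } i .
\]
Consequently, for every $J\subseteq[d]\setminus\{1\}$ with $|J|=k-1$,
\[
M_n v_{1,n}\,v_{J,n}^2=M_n v_{1,n}(0)\prod_{j\in J}v_{j,n}(0)^2\,\bigl(1+o_n(1)\bigr)\quad\text{uniformly on } K .
\]

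\emph{Step 2: the constants $c_{J,n}:=M_n v_{1,n}(0)\prod_{j\in J}v_{j,n}(0)^2$ are bounded.} I apply Lemma \ref{lemma: 1 divergent implies integral bound} with $i=1$ and any fixed $r>0$. By Step 1, $\int_{B_r}v_{J,n}^2\,dx=|B_r|\prod_{j\in J}v_{j,n}(0)^2(1+o(1))$. Since all $\gamma_{J,1}>0$, the lemma bounds each $c_{J,n}$ uniformly in $n$. Passing to a subsequence, $c_{J,n}\to\lambda_J\ge0$ for every such $J$.

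\emph{Step 3: conclusion.} Set $w_{1,n}=v_{1,n}-v_{1,n}(0)$, so that by \eqref{eq: eq satisfied by v_n}
\[
\Delta w_{1,n}=\sum_{J}\gamma_{J,1}\,M_n v_{1,n}v_{J,n}^2-g_{1,n}(x,v_{1,n}).
\]
By Steps 1--2 and \eqref{eq: g_n to 0}, this converges locally uniformly, hence in $L^1_{\loc}(\R^N)$, to the constant $\lambda:=\sum_J\gamma_{J,1}\lambda_J\ge0$. Lemma \ref{lemma: properties w_i} ($ii$) then yields $\Delta w_1=\lambda$ in $\R^N$. By Lemma \ref{lemma: properties w_i} ($i$), $w_1$ is globally $\alpha$-H\"older continuous and non-constant. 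This contradicts Proposition \ref{thm: Liouville 1 comp} ($ii$), which forces $w_1$ to be constant.

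The only delicate point is the uniform replacement of $v_{J,n}^2$ by $\prod_{j\in J} v_{j,n}(0)^2$ in Step 1. This is where we use that \emph{every} component diverges, so that every factor in every interaction term of the equation for $v_{1,n}$ is asymptotically constant. Once that replacement is justified, Steps 2 and 3 follow directly from the earlier lemmas.
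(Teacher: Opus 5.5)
Your proof is correct and follows the paper's argument essentially step by step. You show that all components are asymptotically constant via bounded oscillation, bound the coefficients $M_n v_{1,n}(0)\prod_{j\in J}v_{j,n}(0)^2$ using Lemma \ref{lemma: 1 divergent implies integral bound}, get local uniform convergence of $\Delta w_{1,n}$ to a constant $\lambda\geq0$, and reach the contradiction through Lemma \ref{lemma: properties w_i}~($ii$) and Proposition \ref{thm: Liouville 1 comp}~($ii$).
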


\begin{proof}
By contradiction, we suppose that $v_{i,n}(0) \to +\infty$ for every $i$. Then, since $v_{i,n}$ has locally bounded oscillations, $v_{i,n}\to +\infty$ locally uniformly. Moreover,
\begin{equation}\label{eq:3}
\left|\frac{v_{i,n}(x)}{v_{i,n}(0)} - 1 \right|=\left|\frac{v_{i,n}(x)-v_{i,n}(0)}{v_{i,n}(0)}\right|\to 0 \quad \text{locally uniformly for every $i$}.
\end{equation}
Thus, by Lemma \ref{lemma: 1 divergent implies integral bound} 
\begin{equation}\label{eq:4}
\begin{split}
M_n v_{1,n}(0) \sum_{\substack{J\subseteq \{2,\dots,d\} \\ |J|=k-1}} \gamma_{J,1} v_{J,n}(0)^2 & \le C M_n v_{1,n}(0) \sum_{\substack{J\subseteq \{2,\dots,d\} \\ |J|=k-1}} \gamma_{J,1} v_{J,n}(0)^2  \int_{B_1} \frac{v_{J,n}^2}{v_{J,n}(0)^2} dx \\
&= C M_n v_{1,n}(0) \int_{B_1} \sum_{\substack{J\subseteq \{2,\dots,d\} \\ |J|=k-1}} \gamma_{J,1} v_{J,n}^2\,dx \le C',
\end{split}
\end{equation}
with $C,C'>0$ independent of $n$; namely, the interaction term in the equation of $v_{1,n}$ is bounded at $0$. Furthermore,
\begin{equation}\label{eq:18}
\begin{split}
\Big| M_n v_{1,n}(x) \sum_{\substack{J\subseteq \{2,\dots,d\} \\ |J|=k-1}}\gamma_{J,1} v_{J,n}^2(x) & - M_n v_{1,n}(0) \sum_{\substack{J\subseteq \{2,\dots,d\} \\ |J|=k-1}}\gamma_{J,1} v_{J,n}^2(0) \Big| \\
& = M_n v_{1,n}(0) \sum_{\substack{J\subseteq \{2,\dots,d\} \\ |J|=k-1}}\gamma_{J,1} v_{J,n}^2(0) \left| 1- \frac{v_{1,n}(x)}{v_{1,n}(0)} \frac{v_{J,n}^2(x)}{v_{J,n}^2(0)} \right| \to 0
\end{split}
\end{equation}
as $n \to \infty$, locally uniformly, by \eqref{eq:3} and \eqref{eq:4}. Therefore, there exists a constant $\lambda \ge 0$ such that, up to a subsequence, 
\[
M_n v_{1,n}(x) \sum_{\substack{J\subseteq \{2,\dots,d\} \\ |J|=k-1}}\gamma_{J,1} v_{J,n}^2(x) \to \lambda \quad \text{locally uniformly in $\R^N$}.
\]
Recall that by Remark \ref{rmk: regularity beta fixed} $v_{1,n}$ (and thus also $w_{1,n}$) is in $W^{2,p}$ for every $p$, so \eqref{eq: eq satisfied by v_n} holds pointwise a.e..
Then, together with \eqref{eq: g_n to 0}, we obtain $\Delta w_{1,n}\to \lambda$ in $L^1_{\loc}(\R^N)$.
By Lemma \ref{lemma: properties w_i}, $w_{1,n}$ converges locally uniformly to a limit function $w_1$, non-constant and globally H\"older continuous, which moreover satisfies 
\[
\Delta w_1 = \lambda \quad \text{in $\R^N$},
\]
contradicting Proposition \ref{thm: Liouville 1 comp}. 

\end{proof}
\begin{lemma}\label{lemma: k-1 diverg implies bound}
	Suppose that up to a subsequence $v_{i_1,n}(0)\to+\infty, \dots, v_{i_{k-1},n}(0) \to +\infty$ for a $(k-1)$-tuple of indexes $i_1, \dots, i_{k-1} \in \{1,\dots,d\}$. Then 
	\[
	M_n \prod_{\ell=1}^{k-1} v_{i_\ell,n}^2(0) \le C
	\]
	for some $C>0$ independent of $n$. 
\end{lemma}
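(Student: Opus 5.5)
We argue by contradiction, much as in Lemma \ref{lemma: not d divergent}, but now we only know that the $k-1$ components $v_{i_1,n},\dots,v_{i_{k-1},n}$ diverge at the origin. Suppose, up to a subsequence, that
\[
\Lambda_n:=M_n\prod_{\ell=1}^{k-1}v_{i_\ell,n}^2(0)\to+\infty .
\]
Set $I:=\{i_1,\dots,i_{k-1}\}$. By Lemma \ref{lemma: basic prop}-($iv$) each diverging component has locally bounded oscillation, so $v_{i_\ell,n}/v_{i_\ell,n}(0)\to1$ locally uniformly. Hence, for every $j\notin I$, the equation of $v_{j,n}$ contains the term
\[
-\gamma_{I,j}M_nv_{j,n}v_{I,n}^2=-\gamma_{I,j}\Lambda_n(1+o_n(1))\,v_{j,n},
\]
which is an absorption term with diverging coefficient. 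All the other interaction terms have the same sign, so
\[
-\Delta v_{j,n}\le -c\Lambda_n v_{j,n}+g_{j,n}(x,v_{j,n})\quad\text{in }B_{2R}.
\]

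\emph{Step 1: the components outside $I$ vanish.} First we show that every $j\notin I$ has $v_{j,n}(0)$ bounded. If some $j\notin I$ also diverged, then $k$ components would diverge. Lemma \ref{lemma: 1 divergent implies integral bound}, applied to one of them, would give
\[
M_nv_{j,n}(0)\prod_{\ell}v_{i_\ell,n}^2(0)\le C,
\]
that is $\Lambda_n\,v_{j,n}(0)\le C$. This contradicts $\Lambda_n\to\infty$ together with $v_{j,n}(0)\to\infty$.

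So every $j\notin I$ is bounded at $0$. Lemma \ref{lemma: basic convergence when v(0) bounded} then gives local uniform convergence $v_{j,n}\to v_j$. Applying Lemma \ref{lemma:decadimento esponenziale} with $M=c\Lambda_n$ and $h=g_{j,n}\to0$, we get $v_j\equiv0$. Uniform convergence combined with the $L^2$ decay then forces the sup to vanish on compacts. This uses the bounded oscillation from Lemma \ref{lemma: basic prop}-($iv$): a small $L^2$ norm plus a Hölder-type modulus of continuity gives a small sup.

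\emph{Step 2: the component $1$ must diverge.} Since $w_1$ is non-constant (Lemma \ref{lemma: properties w_i}), the component $1$ cannot be bounded at $0$. Otherwise its limit would be $v_1\equiv0$, contradicting Lemma \ref{lemma: basic convergence when v(0) bounded}-($ii$). Hence $1\in I$.

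\emph{Step 3: the equation of $w_{1,n}$ and the main obstacle.} Write the equation of $w_{1,n}$ and look at its interaction term. Every summand involves a $(k-1)$-subset $J\subseteq[d]\setminus\{1\}$. Since $|I\setminus\{1\}|=k-2$, each such $J$ must contain at least one index $j\notin I$. Each summand is therefore bounded by
\[
C\,M_nv_{1,n}\prod_{\ell\in J\cap I}v_{\ell,n}^2\cdot v_{j,n}^2 .
\]
We need to show that these terms tend to $0$ in $L^1_{\loc}$. This is the step I expect to be the main obstacle: $M_n\prod_{I}v^2$ diverges, while $v_{j,n}$ decays, and we must make sure the decay wins.

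My first attempt is to use the exponential bound of Lemma \ref{lemma:decadimento esponenziale}, which gives
\[
\|v_{j,n}\|_{L^2(B_R)}\lesssim e^{-c\sqrt{\Lambda_n}}+\Lambda_n^{-1}\|g_{j,n}\|.
\]
The polynomial remainder $\Lambda_n^{-1}\|g_{j,n}\|$ may not suffice against $\Lambda_n$. A sharper route is to test the equation of $v_{j,n}$ against a cutoff $\varphi$, as in Lemma \ref{lemma: basic convergence when v(0) bounded}-($iii$). This yields
\[
\Lambda_n\int\varphi\, v_{j,n}\le C,
\]
so $\Lambda_n v_{j,n}$ is bounded in $L^1_{\loc}$. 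Combined with $v_{j,n}\to0$ uniformly, this makes $\Lambda_n v_{j,n}^2\to0$ in $L^1_{\loc}$. Moreover,
\[
M_nv_{1,n}v_{I\setminus\{1\}}^2\approx \frac{\Lambda_n}{v_{1,n}(0)}\le\Lambda_n,
\]
so the summands with exactly one index outside $I$ tend to $0$ in $L^1_{\loc}$. Summands with more indices outside $I$ are even smaller, since the extra factors $v_{j,n}^2$ tend to $0$ uniformly.

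Therefore $\Delta w_{1,n}\to0$ in $L^1_{\loc}$ (using also \eqref{eq: g_n to 0}). Lemma \ref{lemma: properties w_i}-($ii$) then gives a non-constant, globally $\alpha$-Hölder harmonic $w_1$. This contradicts Proposition \ref{thm: Liouville 1 comp}, and hence $\Lambda_n$ must be bounded, which is the claim.
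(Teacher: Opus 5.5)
Your proof is correct and has the same skeleton as the paper's: the components outside $I$ are locally bounded and vanish in the limit, hence $1\in I$, and then $\Delta w_{1,n}\to0$ in $L^1_{\loc}$ contradicts Proposition~\ref{thm: Liouville 1 comp}. Two sub-steps are carried out differently.

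For the local boundedness of $v_{j,n}$, $j\notin I$, the paper does not invoke Lemma~\ref{lemma: 1 divergent implies integral bound}. It tests the equation of $v_{j,n}$ with $v_{j,n}\varphi^2$, and combines the resulting Caccioppoli-type inequality with the bounded oscillation to rule out $\sup_{B_{2r}}v_{j,n}\to\infty$. You instead apply Lemma~\ref{lemma: 1 divergent implies integral bound} to the index $j$ with $J=I$, which gives $\Lambda_n v_{j,n}(0)\le C$. This is shorter and legitimate, since that lemma only needs $v_{j,n}(0)\to+\infty$ along a subsequence.

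For the last step, the paper bounds $M_n\prod_{\ell\in I}v_{\ell,n}^2\le 2I_n$ on $B_r$ and uses Lemma~\ref{lemma:decadimento esponenziale}. Your concern about the remainder there is unfounded: $\|v_{j,n}^2\|_{L^1(B_r)}\le C\|v_{j,n}\|_{L^2(B_r)}$, and $\Lambda_n\cdot\Lambda_n^{-1}\|g_{j,n}\|=\|g_{j,n}\|\to0$ by \eqref{eq: g_n to 0}. Your alternative is more elementary: you take $\Lambda_n\int_K v_{j,n}\le C$ from Lemma~\ref{lemma: basic convergence when v(0) bounded}-($iii$) with $J=I$ and multiply by $\sup_K v_{j,n}\to0$. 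That same bound already forces $v_j\equiv0$, so along your route the exponential decay lemma is not needed at all.

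One justification should be stated more precisely, namely for summands with several indices outside $I$. The missing diverging factors $v_{\ell,n}^2$, $\ell\in(I\setminus\{1\})\setminus J$, are $\ge1$ on compacts for large $n$, and the extra factors are bounded. Hence every summand is dominated by $C\,M_nv_{1,n}v_{I\setminus\{1\},n}^2v_{j,n}^2$ for some $j\notin I$.
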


\begin{proof}
	We argue by contradiction, and suppose that up to a subsequence $M_n \prod_{\ell=1}^{k-1} v_{i_\ell,n}^2(0) \to +\infty$ as $n \to \infty$. 
	
	\smallskip
	
	\emph{Step 1)} For any $i\neq i_1,\dots,i_{k-1}$ and for any $r>1$, we claim that $\|v_{i,n}\|_{L^\infty(B_{r})} \le C$ for a positive constant $C>0$ depending only on $r$. 
	
	\smallskip
	
	First of all, since $v_{j,n}$ has locally bounded oscillation for any $j$ by Lemma \ref{lemma: basic prop}, we have that $v_{j,n} \to +\infty$ locally uniformly for $j=i_1,\dots,i_{k-1}$. Moreover, there exists $C(r)>0$ depending on $r$ but not on $n$ such that
	\begin{equation*}
		I_n := M_n \inf_{B_{r}}\prod_{\ell=1}^{k-1} v_{i_\ell,n}^2  \ge M_n \prod_{\ell=1}^{k-1}\left(v_{i_\ell,n}(0)- C(r)\right)^2 \to +\infty.
	\end{equation*}
	Let us test the equation of $v_{i,n}$ with $v_{i,n} \varphi^2$, with $\varphi \in C^\infty_c(\R^N)$ such that $0 \le \varphi \le 1$, $\varphi \equiv 1$ in $B_r$, and $\supp(\varphi) \subset B_{2r}$: we obtain 
	\[
	\int_{\Omega_n} \Big(|\nabla v_{i,n}|^2 \varphi^2 +2v_{i,n} \varphi \nabla v_{i,n}\cdot \nabla \varphi +M_n \varphi^2 \sum_{\substack{J\subseteq [d] \\ |J|=k \\ i\in J}} \gamma_J v_{J,n}^2-g_{i,n}(x,v_{i,n}(x))v_{i,n}\varphi^2 \Big) dx = 0,
	\]
	whence
	\begin{equation}\label{stima prod}
		\begin{split}
			C(r) \, I_n \inf_{B_r} v_{i,n}^2 & \le M_n \int_{B_r} \sum_{\substack{J\subseteq [d] \\ |J|=k \\ i\in J}}\gamma_J v_{J,n}^2 \,dx  \le \int_{\Omega_n} \Big(\frac12 |\nabla v_{i,n}|^2 \varphi^2 +M_n \varphi^2\sum_{\substack{J\subseteq [d] \\ |J|=k \\ i\in J}}\gamma_J v_{J,n}^2 \Big) dx\\
			& \le 2\int_{\Omega_n} \left(v_{i,n}^2 |\nabla \varphi|^2+g_{i,n}(x,v_{i,n}(x))v_{i,n}\varphi^2\right)\,dx
			\le C(r) (\sup_{B_{2r}} v_{i,n}^2+\sup_{B_{2r}} v_{i,n}),
		\end{split}
	\end{equation}
	for some $C(r)>0$. We also observe that $\inf_{B_r} v_{i,n} \ge \sup_{B_{2r}} v_{i,n} - C(r)$, with $C(r)>0$ (once again, we used the fact that $\mf{v}_n$ has locally bounded oscillation). Therefore, \eqref{stima prod} yields
	\[
	I_n \sup_{B_{2r}} v_{i,n}^2 \le C(r) \Big(\sup_{B_{2r}} v_{i,n}^2 +  I_n \sup_{B_{2r}} v_{i,n}+\sup_{B_{2r}}v_{i,n}\Big).
	\]
	Since $I_n\to+\infty$, if $\sup_{B_{2r}} v_{i,n}^2\to+\infty$, then we would obtain
	\begin{equation*}
		1\leq C(r)\left(\frac{1}{I_n}+\frac{1}{\sup_{B_{2r}}v_{i,n}}+\frac{1}{I_n \sup_{B_{2r}}v_{i,n}}\right)=o(1),
	\end{equation*}
	which is a contradiction. Then the claim is proved.
	
	\smallskip
	
	\emph{Step 2)} $1\in\{i_1,\dots,i_{k-1}\}$
	
	\smallskip
	
	Let $i\neq i_1,\dots,i_{k-1}$. By definition of $I_n$, in $B_{r}$ we have, for every $r>0$,
	\begin{align*}
		-\Delta v_{i,n} &= - M_n \gamma_{i,i_1,\dots,i_{k-1}}v_{i,n} \prod_{\ell=1}^{k-1} v_{i_\ell,n}^2 - M_n v_{i,n} \sum_{\substack{J\subseteq [d]\setminus \{i\} \\ |J|=k-1 \\ J\neq\{i_1,\dots,i_{k-1}\}}}\gamma_{J,i}v_{J,n}^2+g_{i,n}(x,v_{i,n}(x)) \\
		&\le - I_n v_{i,n}+g_{i,n}(x,v_{i,n}(x)).
	\end{align*}
	By Lemma \ref{lemma:decadimento esponenziale} and Step 1, and recalling that $I_n \to +\infty$, we obtain
	\begin{equation}\label{eq:1}
		\|v_{i,n}\|_{L^2(B_{r/2})} \le C \sup_{B_{r}} v_{i,n} e^{- \frac{r\sqrt{I_n}}{2}}+\frac{\|g_{i,n}(\,\cdot\,,v_{i,n}(\,\cdot\,))\|_{L^2(B_{r/2})}}{I_n} \to 0 
	\end{equation}
	as $n \to \infty$. Since $r>1$ was arbitrarily chosen, $v_{i,n} \to 0$ in $L^2_{\loc}(\R^N)$. Together with local uniform convergence to $v_i$ provided by Lemma \ref{lemma: basic convergence when v(0) bounded}, this implies $v_i=0$. By Lemma \ref{lemma: basic prop} again, $v_1$ is not constant, hence $i\neq 1$ and Step 2 is proved.
	
	\smallskip
	\emph{Step 3: final contradiction}
	
	By Step 2, without loss of generality we can assume $\{i_1,\dots,i_{k-1}\}=\{1,\dots,k-1\}$, so that $v_{j,n}$ is bounded locally uniformly for all $k\leq j \leq d$ by Step 1.
	Let now $r>1$, and $\bar x_n \in \overline{B_r}$ be such that
	\[
	M_n v_{1,n}^2\dots v_{k-1,n}^2\,(\bar x_n) = I_n.
	\]
	Since $\{\mf{v}_n\}$ has locally bounded oscillation and $v_{1,n},\dots, v_{k-1,n} \to +\infty$ locally uniformly, we have that for any $x\in B_r$,
	\[
	M_n \prod_{\ell=1}^{k-1}v_{\ell,n}(x)^2 \le M_n \prod_{\ell=1}^{k-1} (v_{\ell,n}(\bar x_n)+C(r))^2 \le 2 M_n \prod_{\ell=1}^{k-1}v_{\ell,n}(\bar x_n)^2 = 2I_n.
	\]
	Then we deduce that
	\begin{align*}
		\|\Delta v_{1,n}\|_{L^1(B_r)} &\leq M_n \sum_{\substack{J\subseteq [d]\setminus \{1\} \\ |J|=k-1}} \gamma_{J,1} \|v_{1,n} v_{J,n}^2\|_{L^1(B_r)} + \|g_{1,n}(\,\cdot\,,v_{1,n}(\,\cdot\,))\|_{L^1(B_r)} \\
		&\leq M_n \sum_{\substack{J\subseteq [d]\setminus \{1\} \\ |J|=k-1}} \gamma_{J,1} \|v_{1,n}^2 v_{J,n}^2\|_{L^1(B_r)} + \|g_{1,n}(\,\cdot\,,v_{1,n}(\,\cdot\,))\|_{L^1(B_r)} \\
		&\leq C(r) I_n \left(e^{-r\sqrt{I_n}}+\frac{\|g_{1,n}(\,\cdot\,,v_{1,n}(\,\cdot\,))\|_{L^1(B_r)}}{I_n}\right)+\|g_{1,n}(\,\cdot\,,v_{1,n}(\,\cdot\,))\|_{L^1(B_r)}\to 0,
	\end{align*}
	where in the last inequality we used \eqref{eq:1} and the fact that any $k$-tuple contains at least one component with index $j\geq k$. 
	If we consider $w_{1,n}:= v_{1,n}-v_{1,n}(0)$, Lemma \ref{lemma: properties w_i}, implies that $w_1$ is globally H\"older continuous, non-constant, and harmonic, contradicting Proposition \ref{thm: Liouville 1 comp} $(i)$.
\end{proof}
\begin{lemma}\label{lemma: tra k-1 e d-1 diverg}
It is not possible that $k-1\leq |I_\infty|\leq d-1$.
\end{lemma}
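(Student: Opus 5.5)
The plan is to argue by contradiction: assume that, up to a subsequence, $m:=|I_\infty|$ satisfies $k-1\le m\le d-1$. Then split according to whether $1\in I_\infty$ or $1\in I_F$. Each case should end with a non-constant, globally $\alpha$-H\"older limit that violates either Proposition \ref{thm: Liouville 1 comp} or Corollary \ref{cor: liou seg}. The main inputs are Lemma \ref{lemma: k-1 diverg implies bound}, which applies to \emph{every} $(k-1)$-subset of $I_\infty$, and Lemma \ref{lemma: 1 divergent implies integral bound}. Wherever a term cannot be controlled by the equation alone, I expect to fall back on the minimality in Lemma \ref{lemma: basic prop}-($v$).

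\emph{Preliminary bounds.} For $J\subseteq I_\infty$ with $|J|=k-1$, Lemma \ref{lemma: k-1 diverg implies bound} gives $M_n v_{J,n}^2(0)\le C$. Since each $v_{j,n}/v_{j,n}(0)\to1$ locally uniformly, $M_n v_{J,n}^2\to\lambda_J\ge0$ locally uniformly, up to a subsequence. For the bounded indices $i\in I_F$, the $v_{i,n}$ are locally bounded by Lemma \ref{lemma: basic prop}-($iv$) and converge to $v_i$ by Lemma \ref{lemma: basic convergence when v(0) bounded}.

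The terms mixing the two classes are the delicate ones. Take $J$ containing $h\ge1$ indices of $I_F$ and $k-1-h$ indices of $I_\infty$. If $k-1-h<k-1$, the factor $M_n\prod_{j\in J\cap I_\infty}v_{j,n}^2$ may diverge. The natural dichotomy, term by term, is whether it stays bounded or blows up. If it blows up, testing as in Step 1 of Lemma \ref{lemma: k-1 diverg implies bound}, or applying Lemma \ref{lemma:decadimento esponenziale}, should show that the limits $v_i$, $i\in J\cap I_F$, satisfy the product-segregation condition $\prod_{i\in J\cap I_F} v_i\equiv0$. If it stays bounded, it produces a limiting coupling among the $I_F$ components with bounded coefficients.

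\emph{Case $1\in I_\infty$.} I would show that $\Delta w_{1,n}\to\lambda$ in $L^1_{\loc}$ for some constant $\lambda\ge0$. The terms with $J\subseteq I_\infty$ behave exactly as in \eqref{eq:18}. They converge to a constant, because Lemma \ref{lemma: 1 divergent implies integral bound} gives $M_n v_{1,n}(0)v_{J,n}^2(0)\le C$ and the ratios tend to $1$. For the mixed terms, Lemma \ref{lemma: 1 divergent implies integral bound} yields $\sup_n M_n v_{1,n}(0)\int_{B_r}v_{J,n}^2\le C$. Since $v_{1,n}(0)\to\infty$, this gives $M_n\int_{B_r}\prod_{j\in J\cap I_\infty}v_{j,n}^2\prod_{i\in J\cap I_F}v_{i,n}^2\to0$. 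This is not enough to conclude that $M_n v_{1,n}\int_{B_r} v_{J,n}^2$ tends to a constant.

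Here I would try to use that the prefactor $M_n\prod_{j\in J\cap I_\infty}v_{j,n}^2$ is asymptotically constant (ratio to its value at $0$ tends to $1$). Consequently $v_{1,n}\times$(mixed term) is, up to $o(1)$, that constant times $v_{1,n}(0)\prod_{i\in J\cap I_F}v_{i,n}^2$. Its weak limit is then $c\,\prod_{i\in J\cap I_F}v_i^2$, which is non-constant in general. To rule this out, I would show that this limit vanishes. The idea is a comparison argument: each $v_i$ with $i\in J\cap I_F$ solves $-\Delta v_{i,n}\le -(\text{divergent constant})\,v_{i,n}\prod(\text{other } I_F)^2$, so Theorem \ref{thm: liou sub} applied to the $I_F$ components of $J$ forces $\prod_{i\in J\cap I_F}v_i\equiv0$. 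With $\Delta w_1=\lambda$ in hand, Lemma \ref{lemma: properties w_i}-($ii$) and Proposition \ref{thm: Liouville 1 comp} give the contradiction with the non-constancy of $w_1$.

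\emph{Case $1\in I_F$.} Passing to the limit in the equations, each $v_i$ with $i\in I_F$ is nonnegative, globally $\alpha$-H\"older continuous and subharmonic, since every interaction term has a favourable sign. The components also inherit partial segregation among at most $k-1$ of themselves, through the divergent mixed terms and through the terms $J\subseteq I_\infty$ with $\lambda_J>0$; the latter give $\Delta v_i\ge \lambda_J v_i$. If some $\lambda_J>0$ appears in the equation of $v_1$, comparison with Proposition \ref{thm: Liouville 1 comp}($iii$) should force $v_1\equiv0$. Otherwise the limiting configuration of the $I_F$ components is partially segregated of some order $\ell\le k$. Then Corollary \ref{cor: liou seg}, with $\alpha<\bar\nu$, iterated inductively on the cardinality of the surviving set, shows that all but constants vanish. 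This contradicts Lemma \ref{lemma: basic convergence when v(0) bounded}-($ii$).

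\emph{Main obstacle.} The hardest step is the treatment of the mixed interaction terms, those with $J$ meeting both $I_F$ and $I_\infty$ and with fewer than $k-1$ divergent factors. Their coefficients are not controlled by Lemma \ref{lemma: k-1 diverg implies bound}. I expect that minimality is genuinely needed here: comparing $\mf v_n$ with a competitor that truncates the bounded components on a ball, to obtain the segregation of the limits rather than only a differential inequality.
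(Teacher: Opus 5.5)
\textbf{There is a genuine gap, at the step you call the main obstacle.} You claim that for $J$ meeting both classes the factor $M_n\prod_{j\in J\cap I_\infty}v_{j,n}^2$ ``may diverge''. Under the standing hypothesis $|I_\infty|\ge k-1$ this is false, and its failure is exactly what the proof rests on. Any $J\subseteq I_\infty$ with $|J|\le k-2$ (including $J=\emptyset$) extends to some $J'\subseteq I_\infty$ with $|J'|=k-1$. Lemma \ref{lemma: k-1 diverg implies bound} applied to $J'$ then gives
\[
M_n\prod_{j\in J}v_{j,n}^2(0)\le \frac{C}{\prod_{j\in J'\setminus J}v_{j,n}^2(0)}\to 0 .
\]
By the bounded oscillation this becomes $M_nv_{J,n}^2\to0$ locally uniformly, and in particular $M_n\to0$. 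Hence every term meeting $I_F$ disappears in the limit:
\begin{itemize}
\item in the equation of $v_{i,n}$, $i\in I_F$, because the $I_F$ components are locally bounded;
\item in the equation of $v_{1,n}$ when $1\in I_\infty$, because $\{1\}\cup(J\cap I_\infty)$ is still a subset of $I_\infty$ of cardinality at most $k-1$, so $M_nv_{1,n}v_{J,n}^2\le C/\inf_{B_R}v_{1,n}\to0$.
\end{itemize}

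With this the lemma closes using only the equations. Every $v_i$ with $i\in I_F$ solves $\Delta v_i=\lambda v_i$ with $\lambda=\sum_{J\subseteq I_\infty,|J|=k-1}\lambda_J$. It is therefore constant by Proposition \ref{thm: Liouville 1 comp}, which forces $1\in I_\infty$. Then $\Delta w_{1,n}\to\lambda'$ in $L^1_{\loc}$ for some constant $\lambda'\ge0$, the terms with $J\subseteq I_\infty$ being handled as in \eqref{eq:18}. Lemma \ref{lemma: properties w_i} and Proposition \ref{thm: Liouville 1 comp} then contradict the non-constancy of $w_1$. Your case split matches the paper's, but the paper needs no segregation of the limits, no Theorem \ref{thm: liou sub}, and no minimality here. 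Minimality is used only for $1\le|I_\infty|\le k-2$, where the extension to a $(k-1)$-subset of $I_\infty$ is unavailable.

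\textbf{Consequences for your two cases.} Because of this wrong premise, neither case establishes what is needed.

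For $1\in I_\infty$, the quantity to control is $M_nv_{1,n}(0)\prod_{j\in J\cap I_\infty}v_{j,n}^2(0)$, and by the observation above it is at most $C/v_{1,n}(0)\to0$. Your route through $\prod_{i\in J\cap I_F}v_i\equiv0$ relies on a ``divergent constant'' in the inequality for $v_{i,n}$, $i\in J\cap I_F$. That coefficient is $M_n\prod_{j\in L\cap I_\infty}v_{j,n}^2$ with $L=J\cup\{1\}$. Here $L\cap I_\infty$ is a subset of $I_\infty$ of size at most $k-1$, so the coefficient is bounded, and infinitesimal when the size is at most $k-2$.

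For $1\in I_F$, the asserted partial segregation of the $I_F$ limits comes from the same non-existent source. Even granting it, Corollary \ref{cor: liou seg} only ensures that one component in each tuple vanishes. A nonnegative, subharmonic, globally $\alpha$-H\"older $v_1$ need not be constant. So without an actual equation for $v_1$ — which is exactly what the vanishing of the mixed terms supplies — you cannot contradict Lemma \ref{lemma: basic convergence when v(0) bounded}-($ii$).
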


\begin{proof}
Assume by contradiction that $k-1\leq |I_\infty|\leq d-1$. Then, for any $J\subseteq I_\infty$ with $|J|\leq k-2$, we can find $J'\subseteq I_\infty$ with $J\subset J'$ and $|J'|=k-1$. Then
\begin{equation}\label{eq:5}
	M_n \prod_{i\in J} v_{i,n}(0)^2\leq C \frac{M_n\prod_{i\in J'}v_{i,n}(0)^2}{\prod_{i\in J'\setminus J}v_{i,n}(0)^2}\to 0 \quad \text{as $n\to+\infty$},
\end{equation}
since the numerator is bounded by Lemma \ref{lemma: k-1 diverg implies bound}, while the denominator diverges. An analogous reasoning proves that $M_n\to 0$.
Summarizing, since $v_{i,n}$ has locally bounded oscillations for every $i$, by Lemma \ref{lemma: k-1 diverg implies bound} up to subsequences there holds:
\begin{enumerate}
	\item[$a)$] $\forall J\subseteq I_\infty$, $|J|=k-1$, $M_n v_{J,n}^2\to \lambda_J\geq 0 $ locally uniformly in $\R^N$;
	\item[$b)$] $\forall J\subseteq I_\infty$, $|J|\leq k-2$, $M_n v_{J,n}^2\to 0 $ locally uniformly in $\R^N$.
\end{enumerate}

Then, for every $i\in I_F$ (notice that $I_F$ is non empty as $|I_\infty|\leq d-1$), we have
\begin{equation*}
	\Delta v_{i,n} = M_n v_{i,n} \sum_{\substack{J\subseteq I_\infty \\ |J|=k-1}} \gamma_{J,i} v_{J,n}^2 + M_n v_{i,n} \sum_{\substack{J\subseteq I_\infty, J'\subseteq I_F \\ |J|\leq k-2 \\ |J|+|J'|=k-1 }} \gamma_{J,J',i}v_{J,n}^2  v_{J',n}^2-g_{i,n}(x,v_{i,n}(x)).
\end{equation*}
By what we have just proved, local uniform boundedness of $v_{j,n}$ for $j\in I_F$ and \eqref{eq: g_n to 0}, it follows that the right hand side tends to $\lambda v_i$ locally uniformly, with $$\lambda=\sum_{\substack{J\subseteq I_\infty \\ |J|=k-1}} \lambda_J.$$ 
Then, by Lemma \ref{lemma: basic convergence when v(0) bounded}, $v_i$ solves $\Delta v_i=\lambda v_i$, and thus it is constant by Proposition \ref{thm: Liouville 1 comp}. By Lemma \ref{lemma: basic convergence when v(0) bounded} $(ii)$, this yields $1\in I_\infty$. 

Without loss of generality, we can suppose that $d\in I_F$. Then 
\[
\Delta v_{1,n} = M_n v_{1,n} \sum_{\substack{J\subseteq I_\infty\setminus \{1\} \\ |J|=k-1}} \gamma_{J,1} v_{J,n}^2 + M_n v_{1,n} \sum_{\substack{J\subseteq [d]\setminus \{1\} \\ |J|=k-1 \\ J\cap I_F \neq \emptyset}} \gamma_{J,1} v_{J,n}^2 - g_{1,n}(x,v_{1,n}(x)).
\]
Notice that:
\begin{itemize}
	\item each term of the form $M_n \gamma_{J,1} v_{1,n} v_{J,n}^2$ with $J\subseteq I_\infty\setminus\{1\}$ and $|J|=k-1$ (if any) converges to some $\lambda_J\geq 0$, arguing as in Lemma \ref{lemma: not d divergent} (cfr. \eqref{eq:4} and \eqref{eq:18});
	\item each term of the form $M_n \gamma_{J,1} v_{1,n} v_{J,n}^2$ with $J\cap I_F \neq \emptyset$ converges to $0$ locally uniformly since, for any $R>0$ and $x\in B_R$,
	\begin{equation*}
		M_n v_{1,n}(x) v_{J,n}^2(x)\leq \frac{M_n v_{1,n}^2(x) v_{J,n}^2(x)}{\inf_{B_R} v_{1,n}}\leq \frac{C}{\inf_{B_{R}} v_{1,n}} \to 0,
	\end{equation*}
	because of $a)$ and $b)$ above.
\end{itemize}
Then, recalling \eqref{eq: g_n to 0}, we obtain $\Delta v_{1,n}\to \lambda$ in $L^1_{\loc}(\R^N)$, so by Lemma \ref{lemma: properties w_i} $w_{1,n}$ converges to $w_1$ satisfying $\Delta w_1=\lambda$ where $$\lambda=\sum_{\substack{J\subseteq I_\infty\setminus\{1\} \\ |J|=k-1}} \lambda_J \geq 0.$$ Since moreover $w_1$ is also globally H\"older continuous and non-constant, Proposition \ref{thm: Liouville 1 comp} gives the desired contradiction.
%
%
%
%
%
%
\end{proof}

In what follows we focus on ruling out the case in which the number of diverging components is between $1$ and $k-2$. Notice that so far we have exploited the equation satisfied by $\mf v_n$, namely the \emph{criticality} of these functions with respect to the functional $J_{M_n,\bs g_n}$, but not their \emph{minimality}. The hypothesis of minimality is needed only in the following case, in analogy to what happens in the case $k=d=3$ studied in \cite{SoTe1}, where minimality is needed only to rule out the case of exactly one diverging component.

The next lemma shows the consequence of the minimality of $\mf{v}_n$ that is crucial in the following.
\begin{lemma}\label{lemma: minimality implies positivity} 
Suppose that there exist $x_0 \in \R^N$ and $R>\rho>0$ such that: 
\begin{itemize}
	\item[$i)$] $v_{i,n} \to v_i$ uniformly in $B_R(x_0)$ and in $H^1(B_R(x_0))$, with $v_i|_{S_{\rho}(x_0)} \not \equiv  0$, for some index $i$;
	\item[$ii)$] $d-(k-1)$ components converge to $0$ uniformly in $B_R(x_0)$ and in $H^1(B_R(x_0))$.
\end{itemize}
Then $v_i >0$ in $B_\rho(x_0)$.
\end{lemma}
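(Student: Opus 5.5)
We follow the strategy of the analogous lemma in \cite{SoTe1}: compare $\mf v_n$ with a competitor in which the $i$-th component is replaced, inside $B_\rho(x_0)$, by its harmonic extension, and exploit minimality of $\mf v_n$ (Lemma \ref{lemma: basic prop} ($v$)). Let $\mathcal Z\subseteq[d]\setminus\{i\}$ be the set of $d-(k-1)$ components converging to $0$. Since $|\mathcal Z|=d-k+1$, every $k$-tuple $J$ with $i\in J$ must intersect $\mathcal Z$. This is the structural fact that makes the interaction cost of the competitor negligible.

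\emph{Step 1 (competitor).} Let $h_n$ be harmonic in $B_\rho(x_0)$ with $h_n=v_{i,n}$ on $S_\rho(x_0)$, and let $h$ be the harmonic extension of $v_i|_{S_\rho(x_0)}$. We define $\tilde{\mf v}_n$ by replacing $v_{i,n}$ with $h_n$ in $B_\rho(x_0)$ and leaving everything else unchanged. To make the interaction of the new component controlled, we also replace each $v_{j,n}$, $j\in\mathcal Z$, by a function $\zeta_n$ that equals $v_{j,n}$ outside $B_{\rho'}$ ($\rho<\rho'<R$) and vanishes in $B_\rho$. A linear cut-off interpolation $(1-\chi)v_{j,n}$ suffices: its Dirichlet energy is $o(1)$, because $v_{j,n}\to0$ uniformly and in $H^1(B_R)$. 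With this choice every $k$-wise product containing $i$ vanishes in $B_\rho$, and all products only decrease elsewhere since the factors decrease pointwise. The potential terms $G_{i,n}$ are $o(1)$, since $g_{i,n}\to0$ locally uniformly and all functions are locally bounded. Minimality then gives
\[
\int_{B_\rho}|\nabla v_{i,n}|^2 \le \int_{B_\rho}|\nabla h_n|^2+o(1).
\]
Passing to the limit with the $H^1$ convergence, we obtain $\int_{B_\rho}|\nabla v_i|^2\le\int_{B_\rho}|\nabla h|^2$; the convergence $h_n\to h$ in $H^1$ follows from trace convergence. Since $v_i$ and $h$ share the trace on $S_\rho(x_0)$, the Dirichlet principle forces $v_i=h$ in $B_\rho(x_0)$. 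Thus $v_i$ is harmonic, nonnegative, and not identically zero on the sphere, so by the strong maximum principle $v_i>0$ in $B_\rho(x_0)$.

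\emph{Step 2 (technicalities).} We must check that the traces match, so that $\tilde{\mf v}_n-\mf v_n\in H^1_0(B_{\rho'})$, that $h_n\ge0$, and that $h_n$ converges to $h$. Here uniform convergence on $S_\rho$ and the maximum principle suffice. The estimates then reduce to uniform convergence plus $H^1$ convergence on $B_R(x_0)$.

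\emph{Main obstacle.} The delicate point is making sure the cut-off modifications of the $\mathcal Z$ components do not create new interaction energy, and that this stays controlled despite the factor $M_n$, which may diverge. Pointwise monotonicity handles this: every product involving a modified component is no larger than before, and the new $i$-component is paired in $B_\rho$ only with $k$-tuples containing a zeroed index. The only energy cost is therefore the Dirichlet energy of the cut-offs, which is $o(1)$ independently of $M_n$. I would double-check that $\nabla((1-\chi)v_{j,n})$ is controlled by $\|v_{j,n}\|_{L^\infty}|\nabla\chi|+|\nabla v_{j,n}|$, both of which tend to $0$ in $L^2(B_R)$.
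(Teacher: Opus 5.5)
Your proposal is correct and follows the paper's argument: the same competitor (harmonic replacement of the $i$-th component in $B_\rho(x_0)$, cut-off of the $d-k+1$ vanishing components, all other components unchanged) and the same accounting of the Dirichlet, interaction and potential terms, with one cosmetic difference — you argue directly, obtaining $\int_{B_\rho}|\nabla v_i|^2\le\int_{B_\rho}|\nabla h|^2$ and hence $v_i=h$, whereas the paper assumes $v_i$ vanishes somewhere in $B_\rho$ to produce a strict energy gap $\delta$. One precision: the smallness of the $G$-terms on the competitor does not follow from $g_{j,n}(x,v_{j,n}(x))\to0$ alone; it follows from $R_n<L_n/n$, which only controls $f_{j,\beta_n}$ on $[0,\sup_{B_3}u_{j,n}]$, so what makes your $o(1)$ claim legitimate is precisely $0\le\zeta_n\le v_{j,n}$ and $h_n\le\max_{S_\rho}v_{i,n}$.
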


\begin{proof}
Without loss of generality, we fix $x_0=0$ and we assume $i=d$ and $v_1,\dots, v_{d-k+1}$ converge to $0$ (in this proof we never use that $v_1$ is non-constant, hence the same proof works for every possible choice of indexes). We suppose by contradiction that $\{v_d=0\} \cap B_\rho \neq \emptyset$, and we aim at contradicting the minimality of $\mf{v}_n$, Lemma \ref{lemma: basic prop}-($v$).

Let 
\[
c_n:= J_{M_n,\bs g_n}(\mf{v}_n, B_R) = \min \left\{ J_{M_n,\bs g_n}(\mf{u}, B_R) : \mf{u} \in H^1(B_R, \R^d) \text{ such that } \mf{u}-\mf{v}_n \in H_0^1(B_R, \R^d)\right\}.
\]
Let $\eta \in C^1(\overline{B_R})$ be a radial cut-off function with the following properties: $0 \le \eta \le 1$, $\eta$ is radially increasing, $\eta \equiv 0$ in $B_\rho$, $\eta =1$ on $S_{R}$. We define a competitor $\tilde{{v}}_n$ by defining
\[
\begin{split}
	\tilde v_{j,n}&:= \eta v_{j,n} \quad \text{ for $j=1,\dots,d-k+1$}, \\
	\tilde v_{j,n}&:= v_{j,n} \qquad \text{for $j=d-k+2,\dots, d-1$}, \\
	\tilde v_{d,n}&:= \begin{cases} v_{d,n} &\text{in $B_R \setminus B_\rho$} \\ \text{harmonic extension of $v_{d,n}$ on $S_\rho$} & \text{in $B_\rho$}
	\end{cases}.
\end{split}
\]
By continuity of the harmonic extension, we have that $\tilde v_{d,n} \to \tilde v_d$ in $H^1(B_R) \cap C(\overline{B_R})$ as $n\to\infty$, where 
\[
\tilde v_{d} := \begin{cases} v_{d} & \text{in $B_R \setminus B_\rho$} \\ \text{harmonic extension of $v_{d}$ on $S_\rho$} & \text{in $B_\rho$}.
\end{cases}
\]
Since $v_d|_{S_\rho} \not \equiv 0$, by the strong maximum principle $\tilde v_{d}>0$ in $B_\rho$, and hence $\tilde v_{d} \neq v_{d}$. This implies, by the Dirichlet principle, that
\[
\int_{B_\rho}|\nabla \tilde v_d|^2\,dx < \int_{B_\rho}|\nabla v_d|^2\,dx,
\]
and thus, by $H^1$ convergence, there exists $\delta>0$ such that
\begin{equation}\label{diff 1}
\int_{B_R}\left(|\nabla \tilde v_{d,n}|^2 - |\nabla v_{d,n}|^2\right)\,dx \le  - \delta,
\end{equation}
for every $n$ large enough. 
Moreover, by definition of $G_{j,n}$ in Lemma \ref{lemma: basic prop} $(v)$, 
$$
|G_{j,n}(x,u)|=\left|\int_0^u g_{j,n}(x,s) \, ds\right| \leq |u|\sup_{s\in [0,u]} |g_{j,n}(x,s)|=\eta(x_n)\frac{r_n^{2-\alpha}}{L_n}|u|\sup_{s\in [0,u]} \left|f_{j,n}(x_n+r_n x,\frac{L_n r_n^{\alpha}}{\eta(x_n)}s)\right|
$$
Therefore, for every $x\in B_R$,
$$
|G_{j,n}(x,v_{j,n}(x))|\leq\eta(x_n)\frac{r_n^{2-\alpha}}{L_n} |v_{j,n}(x)|\sup \{\left|f_{j,n}(x_n+r_n x,s)\right|:s\in [0,u_{j,n}(x_n+r_n x)]\},
$$
and for $n$ large enough, recalling \eqref{eq:L_n},
$$
\sup_{x\in B_R}\sup \{\left|f_{j,n}(x_n+r_n x,s)\right|:s\in [0,u_{j,n}(x_n+r_n x)]\}<\frac{L_n}{n},
$$
so that for $j=1,\dots, d-k+1$, recalling that $r_n\to 0^+$ and $v_{j,n}$ is uniformly convergent, we obtain 
\begin{equation}\label{eq:termine g_j,n va a zero}
	\int_{B_R} G_{j,n}(x,v_{j,n}(x)) \, dx \to 0 \qquad \text{ as } n\to\infty,
\end{equation}
Similarly,
\begin{equation}\label{eq:termine g_d,n va a zero 1}
	\int_{B_\rho} G_{d,n}(x,v_{d,n}(x)) \, dx \to 0, \qquad \int_{B_R} G_{j,n}(x,\tilde v_{j,n}(x)) \, dx \to 0 \qquad \text{ as } n\to\infty,
\end{equation}
for $j=1,\dots, d-k+1$ by the same argument, since $\tilde v_{j,n}\leq v_{j,n}$. Finally observe that $v_{d,n}$ and $\tilde v_{d,n}$ differ only in $B_\rho$ and by the maximum principle
$$
\tilde v_{d,n}(x)\leq \max_{S_\rho} v_{d,n} \quad \forall x \in B_\rho,
$$
so arguing as before, recalling \eqref{eq:L_n}, for every $x\in B_\rho$ and $n$ large enough,
\begin{align*}
|G_{d,n}(x,\tilde v_{d,n}(x))|&\leq\eta(x_n)\frac{r_n^{2-\alpha}}{L_n} |\tilde v_{d,n}(x)|\sup \{\left|f_{d,n}(x_n+r_n x,s)\right|:s\in [0,\max_{S_{\rho r_n}(x_n)} u_{d,n}]\} \\
&\leq \frac{\eta(x_n)r_n^{2-\alpha}}{L_n}|\tilde{{v}}_{d,n}(x)| R_n<\frac{\eta(x_n)r_n^{2-\alpha}}{n}|\tilde v_{d,n}(x)|,
\end{align*}
whence
\begin{equation}\label{eq:termine g_d,n va a zero 2}
	\int_{B_\rho} G_{d,n}(x,\tilde v_{d,n}(x)) \, dx \to 0 \qquad \text{ as } n\to\infty.
\end{equation}
Notice now that any $J\subseteq [d]$ with $|J|=k$ is such that $J\cap \{1,\dots, d-k+1\}\neq \emptyset$. Since $\tilde v_{1,n}=\dots=\tilde v_{d-k+1,n} \equiv 0$ in $B_\rho$, $\tilde v_{d,n}=v_{d,n}$ in $B_R\setminus B_\rho$ and $\tilde v_{j,n} \le v_{j,n}$ in $B_R$ for $j=1,\dots,d-1$, the interaction terms are ordered:
\begin{equation}\label{diff prod}
\int_{B_R} M_n \sum_{\substack{J\subseteq [d] \\ |J|=k}} \gamma_{J} \tilde v_{J,n}^2\,dx \le \int_{B_R} M_n \sum_{\substack{J\subseteq [d] \\ |J|=k}} \gamma_{J} v_{J,n}^2\,dx.
\end{equation}
Moreover, since $v_{j,n}\to 0$ in $H^1(B_R(x_0))$ for $j=1,\dots, d-k+1$, also $\tilde v_{j,n}\to 0$, then
\begin{equation}\label{diff 3}
\int_{B_R} \left(|\nabla \tilde v_{j,n}|^2-|\nabla v_{j,n}|^2\right)\,dx \to 0 
\end{equation}
as $n \to \infty$. Therefore, by \eqref{diff 1}-\eqref{diff 3}, we obtain
\[
\begin{split}
c_n &\le J_{M_n,\bs g_n}(\tilde{\mf{v}}_n,B_R) \\
& \le J_{M_n,\bs g_n}(\mf{v}_n,B_R) + \sum_{j=1}^{d-k+1}\int_{B_R} \frac{1}{2}\left(|\nabla \tilde v_{j,n}|^2 -|\nabla v_{j,n}|^2\right)\,dx + \int_{B_\rho}\frac{1}{2} \left(|\nabla \tilde v_{d,n}|^2-|\nabla v_{d,n}|^2\right)\,dx \\
& + \sum_{j=1}^{d-k+1}\int_{B_R} \Big(G_{j,n}(x,v_{j,n}(x))-G_{j,n}(x,\tilde v_{j,n}(x))\Big)\,dx + \int_{B_\rho} \Big(G_{d,n}(x,v_{d,n}(x))-G_{d,n}(x,\tilde v_{d,n}(x))\Big)\,dx\\
& \le c_n -\delta + o(1),
\end{split}
\]
which is a contradiction for sufficiently large $n$.
\end{proof}

\begin{lemma}\label{lemma: 1 to k-2 divergent implies boundedness}
It is not possible that $1\leq |I_\infty|\leq k-2$ with
\[
M_n \prod_{i\in I_\infty} v_{i,n}^2(0) \le C.
\]
\end{lemma}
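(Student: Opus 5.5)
We argue by contradiction, assuming $1\le m:=|I_\infty|\le k-2$ and $M_n\prod_{i\in I_\infty}v_{i,n}^2(0)\le C$. We first show that $1\in I_\infty$, using a limit system for the bounded components and the Liouville-type Corollary \ref{cor: liouville intere interazione}. We then show that the Laplacian of $v_{1,n}$ vanishes in the limit, so that $w_1$ is harmonic, contradicting Proposition \ref{thm: Liouville 1 comp}. As far as we can see, the equation alone suffices here, and minimality (Lemma \ref{lemma: minimality implies positivity}) is not needed. Throughout we use that the oscillations are locally bounded (Lemma \ref{lemma: basic prop}), so $v_{i,n}/v_{i,n}(0)\to1$ locally uniformly for every $i\in I_\infty$. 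Up to a subsequence, this gives
\[
M_n v_{I_\infty,n}^2\to\lambda\ge0 \quad\text{locally uniformly in }\R^N,
\]
arguing exactly as in \eqref{eq:4}--\eqref{eq:18}. Moreover, for any $A\subsetneq I_\infty$ we have $M_n v_{A,n}^2\le M_nv_{I_\infty,n}^2/v_{I_\infty\setminus A,n}^2\to0$ locally uniformly, because the denominator diverges.

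\emph{Step 1: the bounded components and $1\in I_\infty$.} The set $I_F$ has $d-m\ge d-k+2\ge2$ elements. For $i\in I_F$, Lemma \ref{lemma: basic convergence when v(0) bounded} gives $v_{i,n}\to v_i$ locally uniformly and in $H^1_{\loc}$, with $v_i$ globally $\alpha$-H\"older. Consider a term $M_n\gamma_{J,i}v_{i,n}v_{J,n}^2$ in the equation of $v_{i,n}$, and split $J=(J\cap I_\infty)\cup(J\cap I_F)$.
\begin{itemize}
\item If $I_\infty\subseteq J$, the term converges locally uniformly to $\lambda\gamma_{J\cup\{i\}}v_iv_{J\cap I_F}^2$.
\item Otherwise it tends to $0$, by the observation above and the local boundedness of the $I_F$ components.
\end{itemize}
Passing to the limit in the weak formulation, using also \eqref{eq: g_n to 0}, we get for $i\in I_F$
\[
\Delta v_i=v_i\sum_{\substack{J'\subseteq I_F\setminus\{i\}\\ |J'|=k-m-1}}\lambda\,\gamma_{I_\infty\cup J'\cup\{i\}}\,v_{J'}^2\qquad\text{in }\R^N.
\]
This is a system of $d-m$ components with $k':=(k-m)$-wise interaction, where $2\le k'\le k-1$. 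Its coefficients $M_L:=\lambda\gamma_{L\cup I_\infty}$, for $L\subseteq I_F$ with $|L|=k'$, are nonnegative and inherit the symmetry \eqref{eq:symmetry interaction coefficients}. Since $\alpha<\bar\nu\le\alpha_{k',N}/k'$ and the $v_i$ are globally $\alpha$-H\"older (hence grow at most like $C(1+|x|^\alpha)$), Corollary \ref{cor: liouville intere interazione} shows that every $v_i$ with $i\in I_F$ is constant; the degenerate case $\lambda=0$ is covered as well. Since $v_1$ is non-constant by Lemma \ref{lemma: basic convergence when v(0) bounded}$(ii)$, we conclude that $1\in I_\infty$.

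\emph{Step 2: contradiction via $w_1$.} Take any $J\subseteq[d]\setminus\{1\}$ with $|J|=k-1$. Since $|I_\infty\setminus\{1\}|=m-1\le k-3$, the set $J$ contains at least two indices of $I_F$, whose components are locally bounded. Moreover $v_{1,n}^2v_{(J\cap I_\infty),n}^2\le v_{I_\infty,n}^2$ eventually on compact sets, because the missing diverging factors are $\ge1$. Hence on $B_R$
\[
M_nv_{1,n}v_{J,n}^2\le C(R)\,\frac{M_nv_{I_\infty,n}^2}{\inf_{B_R}v_{1,n}}\le\frac{C(R)}{\inf_{B_R}v_{1,n}}\to0 .
\]
Together with \eqref{eq: g_n to 0}, this gives $\Delta w_{1,n}=\Delta v_{1,n}\to0$ in $L^1_{\loc}(\R^N)$. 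By Lemma \ref{lemma: properties w_i}$(ii)$, $w_1$ is then harmonic, globally $\alpha$-H\"older continuous and non-constant, which contradicts Proposition \ref{thm: Liouville 1 comp}$(i)$.

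The delicate point is Step 1: one has to identify the limit system correctly, check that the reduced interaction order $k'=k-m$ is admissible for the threshold $\bar\nu$ (this is exactly where the minimum over $\ell=2,\dots,k$ in \eqref{eq:bar nu} is used), and justify passing to the limit in the weak formulation through the locally uniform convergence of all coefficients.
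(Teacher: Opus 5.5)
Your proof is correct and follows essentially the same route as the paper: you pass to the reduced $(k-|I_\infty|)$-wise limit system for the components in $I_F$ and use Corollary~\ref{cor: liouville intere interazione} to force $1\in I_\infty$, then show $\Delta w_{1,n}\to 0$ in $L^1_{\loc}$ to contradict Proposition~\ref{thm: Liouville 1 comp}. Your explicit check that $k'=k-|I_\infty|\in\{2,\dots,k-1\}$ is admissible for the threshold $\bar\nu$ fills in a detail the paper leaves implicit.
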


\begin{proof}
We argue by contradiction. If 
\begin{equation}\label{eq:17}
M_n \prod_{i\in I_\infty} v_{i,n}^2(0) \le C,
\end{equation}
then similarly to Lemma \ref{lemma: tra k-1 e d-1 diverg} we have that
for any $J\subset I_\infty$ with $|J|\leq |I_\infty|-1$,
\begin{equation*}
	M_n\prod_{i\in J} v_{i,n}^2(0) \to 0,
\end{equation*}
and hence, since $v_{i,n}$ has locally bounded oscillations and $v_{i,n}(0)\to+\infty$ for $i\in I_\infty$, up to subsequences:
\begin{enumerate}
	\item[$a)$] $\forall J\subseteq I_\infty$, $|J|=|I_\infty|$, $M_n v_{J,n}^2\to \lambda_J\geq 0 $ locally uniformly in $\R^N$;
	\item[$b)$] $\forall J\subseteq I_\infty$, $|J|\leq |I_\infty|-1$, $M_n v_{J,n}^2\to 0 $ locally uniformly in $\R^N$.
\end{enumerate}
Moreover $M_n\to 0$. Notice that for every $i\in I_F$ we have for almost every $x$
\begin{equation*}
	\Delta v_{i,n} = M_n v_{i,n} \sum_{\substack{J\subseteq I_\infty,\, J'\subseteq I_F\setminus\{i\} \\ |J|=|I_\infty| \\ |J'|=k-1-|I_\infty|}} \gamma_{J,J',i} v_{J,n}^2 v_{J',n}^2 + M_n v_{i,n} \sum_{\substack{J\subseteq I_\infty, J'\subseteq I_F\setminus\{i\} \\ |J|\leq |I_\infty|-1 \\ |J'|=k-1-|J| }} \gamma_{J,J',i}v_{J,n}^2  v_{J',n}^2-g_{i,n}(x,v_{i,n}).
\end{equation*}
Notice that, by $a)$, 
\begin{equation*}
	M_n v_{i,n} \sum_{\substack{J\subseteq I_\infty,\, J'\subseteq I_F \\ |J|=|I_\infty| \\ |J'|=k-1-|I_\infty|}} \gamma_{J,J',i} v_{J,n}^2 v_{J',n}^2 \to v_i\sum_{\substack{J\subseteq I_F\setminus\{i\} \\ |J|=k-1-|I_\infty|}} \lambda_{i,J} v_{J}^2, \quad \text{locally uniformly as $n\to\infty$},
\end{equation*}
where $\lambda_{i,J}\geq 0$ is symmetric in the sense of \eqref{eq:symmetry interaction coefficients}, while the second sum goes to $0$ locally uniformly by $b)$. Recalling \eqref{eq: g_n to 0}, by $H^1_{\loc}$ convergence we obtain that $v_{i}$ is a nonnegative solution of
\begin{equation}\label{eq:6}
	\Delta v_i = v_{i}\sum_{\substack{J\subseteq I_F\setminus \{i\}\\ |J|=k-|I_\infty|-1} } \lambda_{i,J} v_{J}^2 \quad \text{for every }i\in I_F.
\end{equation}
By Corollary \ref{cor: liouville intere interazione}, $v_{i}$ is constant for any $i\in I_F$, and this implies that $1\in I_\infty$. At this point, we aim at reaching a contradiction. Observe that
\begin{equation*}
	\Delta w_{1,n}= M_n v_{1,n} \sum_{\substack{J\subseteq I_\infty\setminus\{1\}, J'\subseteq I_F \\ |J|\leq |I_\infty|-1 \\ |J'|=k-1-|J| }} \gamma_{J,J',1}v_{J,n}^2  v_{J',n}^2-g_{1,n}(x,v_{1,n}).
\end{equation*}
Since for any $J\subseteq I_\infty\setminus\{1\}$ and for any $x\in B_r$ one has, by \eqref{eq:17}, 
\begin{equation*}
	M_n v_{1,n}v_{J,n}^2 \leq \frac{M_n v_{1,n}^2 v_{J,n}^2}{\inf_{B_{r}} v_{1,n}}\leq \frac{C}{\inf_{B_{r}} v_{1,n}}\to 0,
\end{equation*}
recalling also \eqref{eq: g_n to 0} we obtain $\Delta w_{1,n}\to 0$ in $L^{1}_{\loc}(\R^N)$, so by Lemma \ref{lemma: properties w_i} $w_1$ is harmonic, globally $\alpha$-H\"older continuous and non-constant, which is the desired contradiction.
\end{proof}

We are now ready to exclude the last case of diverging components.
\begin{lemma}\label{lemma: not 1 to k-2 divergent}
It is not possible that $1\leq |I_{\infty}|\leq k-2$.
\end{lemma}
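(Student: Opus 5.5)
Throughout, set $m:=|I_\infty|\in\{1,\dots,k-2\}$. By Lemma \ref{lemma: 1 to k-2 divergent implies boundedness} we may assume, up to a subsequence, that
\[
I_n:=M_n\inf_{B_r}\prod_{i\in I_\infty}v_{i,n}^2\to+\infty
\]
for every fixed $r>0$. This follows from $M_n\prod_{i\in I_\infty}v_{i,n}^2(0)\to+\infty$, because the oscillations of the $v_{i,n}$ are locally bounded and $v_{i,n}\to+\infty$ locally uniformly for $i\in I_\infty$. The plan is to show that the bounded components $I_F$ become $(k-m)$-partially segregated in the limit. The Liouville results then force most of them to vanish, minimality forces the remaining ones to be positive, and this rigidity eventually contradicts the non-constancy of $w_1$.

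\emph{Step 1: segregation in $I_F$.} For $i\in I_F$, Lemma \ref{lemma: basic convergence when v(0) bounded} gives $v_{i,n}\to v_i$ locally uniformly and in $H^1_{\loc}$. It also bounds $M_n\int_K v_{i,n}\sum_J\gamma_{J,i}v_{J,n}^2$. Keep only the terms with $J=I_\infty\cup J'$, where $J'\subseteq I_F\setminus\{i\}$ and $|J'|=k-1-m$. Then
\[
I_n\int_{B_r}v_{i,n}v_{J',n}^2\le C(r),
\]
so $\int_{B_r}v_iv_{J'}^2=0$. Hence every $(k-m)$-tuple of $\{v_i\}_{i\in I_F}$ has identically vanishing product. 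Each $v_i$ is nonnegative, subharmonic, and globally $\alpha$-H\"older. Since $k-m\ge2$ and $\alpha<\bar\nu\le\min_{\ell\le k}\alpha_{\ell,N}/\ell$, Corollary \ref{cor: liou seg} applies to every $(k-m)$-tuple in $I_F$. Therefore at most $k-m-1$ limits in $I_F$ are nonzero, and at least $|I_F|-(k-m-1)=d-k+1$ of them vanish identically. These vanishing components converge to $0$ uniformly and in $H^1$ on every ball.

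\emph{Step 2: positivity via minimality.} Let $P\subseteq I_F$ be the set of indices with $v_i\not\equiv0$. For $i\in P$, $v_i$ is subharmonic, so $v_i|_{S_\rho}\not\equiv0$ for all large $\rho$. Lemma \ref{lemma: minimality implies positivity} then gives $v_i>0$ in $B_\rho$ for every large $\rho$, hence everywhere. Thus, on each ball $B_r$, the components in $P$ are bounded below by some $c_r>0$ for $n$ large.

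\emph{Step 3: exponential decay of the vanishing components.} Let $j\in Z:=I_F\setminus P$, and choose $J'$ of size $k-1-m$ containing $P$, completed if needed by other indices of $I_F\setminus\{j\}$. If $|P|=k-1-m$, then
\[
-\Delta v_{j,n}\le-c\,I_nv_{j,n}+g_{j,n}\quad\text{on }B_r,
\]
and Lemma \ref{lemma:decadimento esponenziale} gives $\|v_{j,n}\|_{L^2(B_{r/2})}\lesssim e^{-c\sqrt{I_n}}+o(1/I_n)$. If $|P|<k-1-m$ we do not get this directly. In that case I would instead use the bound from Step 1, $I_n\int v_{j,n}v_{J',n}^2\le C$, together with an induction on the number of zero components in $J'$, to show that every term $M_nv_{i,n}v_{J,n}^2$ appearing in the equations of the components in $P$ tends to $0$ in $L^1_{\loc}$. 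The point is that each such $J$ contains an index of $Z$. \emph{This step is the main obstacle}: one must check that the factor $M_n\prod_{I_\infty}v^2$, which grows at most polynomially in $I_n$ thanks to the bounded oscillation, is beaten by the exponential decay or by the $1/I_n$ gain.

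Granting Step 3, each $v_i$ with $i\in P$ is harmonic, positive and globally $\alpha$-H\"older, so it is constant by Proposition \ref{thm: Liouville 1 comp}. Every $v_i$ with $i\in I_F$ is therefore constant, and Lemma \ref{lemma: basic convergence when v(0) bounded}\,(ii) forces $1\in I_\infty$. Now look at
\[
\Delta w_{1,n}=M_nv_{1,n}\sum_J\gamma_{J,1}v_{J,n}^2-g_{1,n}.
\]
Each such $J$ satisfies $|J\cap I_\infty|\le m-1$, hence $|J\cap I_F|\ge k-m>|P|$, so $J$ contains an index of $Z$. By Step 3 these terms tend to $0$ in $L^1_{\loc}$, and $g_{1,n}\to0$ by \eqref{eq: g_n to 0}. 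Hence $\Delta w_{1,n}\to0$ in $L^1_{\loc}$. Lemma \ref{lemma: properties w_i} then yields a non-constant, globally $\alpha$-H\"older harmonic function $w_1$, which contradicts Proposition \ref{thm: Liouville 1 comp}\,(i).
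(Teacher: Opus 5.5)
Your Steps 1 and 2 are sound. Step 1 obtains segregation by dividing the integral bound of Lemma \ref{lemma: basic convergence when v(0) bounded}(iii) by $I_n$, which is a valid shortcut for the paper's exponential-decay argument. The genuine gap is Step 3, which is the crux of the lemma and which you leave unproved. When $|P|<k-1-m$, no $(k-1-m)$-tuple in $I_F\setminus\{j\}$ has a positive lower bound, so Lemma \ref{lemma:decadimento esponenziale} cannot be applied to $j\in Z$. The ``induction on the number of zero components'' is not an argument. Without Step 3 you show neither that the components in $P$ are harmonic nor that $\Delta w_{1,n}\to0$, so the contradiction is never reached.

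The missing idea is a one-line estimate that needs no exponential decay and no comparison of growth rates. Every $k$-tuple $J\subseteq[d]$ contains at least $k-m$ indices of $I_F$. By your Step 1 and Corollary \ref{cor: liou seg}, it therefore contains an index $j$ with $v_j\equiv0$. Since $j\in I_F$, Lemma \ref{lemma: basic convergence when v(0) bounded}(iii) applies to $j$ and gives
\[
M_n\int_{B_r}v_{J,n}^2\,dx\le\|v_{j,n}\|_{L^\infty(B_r)}\,M_n\int_{B_r}v_{j,n}\prod_{l\in J\setminus\{j\}}v_{l,n}^2\,dx\le C(r)\,\|v_{j,n}\|_{L^\infty(B_r)}\to0 .
\]
So the whole interaction energy vanishes locally, with no case distinction on $|P|$. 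Now take any component $i$ with $\inf_{B_r}v_{i,n}\ge c>0$: the indices in $P$ after your Step 2, or those in $I_\infty$. For it,
\[
\int_{B_r}|\Delta v_{i,n}|\,dx\le\frac{M_n}{c}\int_{B_r}v_{i,n}^2\sum_{\substack{J\subseteq[d]\setminus\{i\}\\|J|=k-1}}\gamma_{J,i}v_{J,n}^2\,dx+\int_{B_r}|g_{i,n}(x,v_{i,n})|\,dx\to0,
\]
which is exactly what your Step 3 was meant to provide.

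With this estimate your argument closes. The paper orders the steps differently: it first uses the estimate to exclude $1\in I_\infty$, and only then applies Lemma \ref{lemma: minimality implies positivity} to $v_1$ alone. The ingredients are the same.
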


\begin{proof}
We argue by contradiction. First notice that, thanks to Lemma \ref{lemma: 1 to k-2 divergent implies boundedness}, up to a subsequence
\begin{equation}
M_n \prod_{i\in I_\infty} v_{i,n}^2(0) \to +\infty.
\end{equation}

\medskip

\emph{Step 1)} For all $J\subseteq I_F$ with $|J|=k-|I_{\infty}|$ one has $\prod_{i\in J}v_{i}\equiv 0$.

\medskip

Let $J$ be such a set of indexes and suppose that for some $x_0\in \R^N$ and $i\in J$ one has 
$$\prod_{\substack{j\in J \\ j\neq i}} v_{j}(x_{0})>0.$$
If we show that $v_{i}(x_0)=0$, the claim is proved. 

By local uniform convergence we can find $\rho,\delta>0$ such that 
$$\prod_{\substack{j\in J\\ j\neq i}} v_{j,n}\geq \delta \quad \text{in $B_{\rho}(x_0)$}$$
 for every $n$ large enough. Moreover, since $\mf v_n$ has locally bounded oscillations, 
 \begin{equation*}
 	I_n:=\inf_{B_\rho(x_0)} M_n \prod_{i\in I_\infty} v_{i,n}^2 \to +\infty.
 \end{equation*}
 Then clearly
 \begin{equation*}
 	-\Delta v_{i,n} \leq -\delta^2 C I_n v_{i,n}+g_{i,n}(x,v_{i,n}(x)) \quad \text{in $B_\rho(x_0)$},
 \end{equation*}
 so by Lemma \ref{lemma:decadimento esponenziale}
 \begin{equation*}
 	\|v_{i,n}\|_{L^2_{B_{\rho/2}(x_0)}}\leq C e^{-C\sqrt{I_n}}+\frac{C}{I_n},
 \end{equation*}
 for some $C>0$ independent of $n$. Then $v_{i}(x_0)=0$.

\medskip

\emph{Step 2)} $1\in I_F$

\medskip

Notice that for every $j\in I_F$
\begin{equation*}
	\Delta v_{j,n} \geq -g_{j,n}(x,v_{j,n}),
\end{equation*}
so by \eqref{eq: g_n to 0} and $H^1_{\loc}$ convergence we have that $\Delta v_{j}\geq 0$ weakly. Then for any $J\subseteq I_F$ with $|J|=k-|I_\infty|$ we have that
\begin{equation*}
	\begin{cases}
		\Delta v_{j}\geq 0 & \forall j\in J \\
		v_{j}\geq 0 & \forall j\in J \\
		\prod_{j\in J} v_{j} \equiv 0
	\end{cases} 
	\quad \text{in $\R^N$}.
\end{equation*}
By Corollary \ref{cor: liou seg}, there exists $j\in J$ such that $v_{j}\equiv 0$.
Since any $k$-tuple $J'\subseteq \{1,\dots,d\}$ contains at least $k-|I_\infty|$ indexes in $I_F$, there exists $j'\in J'$ such that $j'\in I_F$ and $v_{j'}\equiv 0$. Then, by Lemma \ref{lemma: basic convergence when v(0) bounded} $(iii)$, for any $r>0$ and $J'\subseteq [d]$, $|J'|=k$, there holds
\begin{equation}\label{eq: termine interaz integrale va a 0}
	\int_{B_r} M_n v_{J',n}^2 \, dx\leq \|v_{j',n}\|_{L^\infty(B_r)}\int_{B_r} M_n v_{j',n} \prod_{\substack{i\in J' \\ i\neq j'}} v_{i,n}^2 \, dx \to 0 \quad \text{as $n\to\infty$}.
\end{equation}
As a consequence, for any $i\in I_\infty$, $r>0$,
\begin{equation*}
	\int_{B_r} |\Delta v_{i,n}|\, dx \leq \frac{M_n}{\inf_{B_r} v_{i,n}}  \int_{B_r}v_{i,n}^2\sum_{\substack{J\subseteq [d]\setminus \{i\} \\ |J|=k-1}}\gamma_{J,i}v_{J,n}^2 \, dx+\int_{B_r} |g_{i,n}(x,v_{i,n}(x))|\, dx \to 0,
\end{equation*}
recalling also \eqref{eq: g_n to 0}. Then $\Delta w_{1,n}\to 0$ in $L^1_{\loc}$ and by Lemma \ref{lemma: properties w_i} $w_1$ is harmonic in $\R^N$ and globally $\alpha$-H\"older continuous, thus constant. This rules out the case $1\in I_\infty$. Then, by Lemma \ref{lemma: basic convergence when v(0) bounded}, $v_{1,n}\to v_{1}$ as $n\to\infty$ in $H^1_{\loc}$ and locally uniformly.
\medskip

\emph{Step 3)} $\Delta v_{1}=0$ in $\{v_{1}>0\}$.

\medskip

Let $x_0 \in \R^N$ be such that $v_1(x_{0})>0$. Then, there exist $\delta, \rho>0$ such that $\inf_{B_\rho(x_0)}v_{1,n}\geq \delta$ for all $n$ big enough. Then, recalling \eqref{eq: g_n to 0}, we have
\begin{equation}\label{eq: stima laplaciano L^1 va a zero}
	\begin{split}
		\int_{B_{\rho}(x_0)} |\Delta v_{1,n}| \, dx &\leq \frac{M_n}{\inf_{B_\rho(x_0)}v_{1,n}}\int_{B_\rho}v_{1,n}^2\sum_{\substack{J\subseteq [d]\setminus \{1\} \\ |J|=k-1}}\gamma_{J,1}v_{J,n}^2\, dx+\int_{B_{\rho}(x_0)} |g_{1,n}(x,v_{1,n}(x))|\, dx \\
		&\leq \frac{M_n}{\delta}\int_{B_{\rho}(x_0)}v_{1,n}^2\sum_{\substack{J\subseteq [d]\setminus \{1\} \\ |J|=k-1}}\gamma_{J,1}v_{J,n}^2 \, dx + o(1).
	\end{split}
\end{equation}
By \eqref{eq: termine interaz integrale va a 0}, we infer that $\Delta v_{1,n} \to 0$ in $L^1(B_\rho (x_0))$, and with an analogous argument as in Lemma \ref{lemma: properties w_i} with $v_{1,n}$ instead of $w_{1,n}$ and restricted to $B_\rho (x_0)$ instead of the whole $\R^N$, one can prove that $v_{1}$ is harmonic in $B_\rho (x_0)$. By arbitrariness of $x_0$, Step 3 is proved.

\medskip

\emph{Step 4)} Final contradiction

\medskip

Since in Step 2 we proved that for any $J\subseteq I_F$ with $|J|=k-|I_\infty|$ there exists $j\in J$ such that $v_{j}\equiv 0$, there are at least $|I_F|-(k-|I_\infty)+1=d-k+1$ components whose limit is identically $0$. Moreover, $v_{1}$ is not constant, hence $i=1$ is not among such indexes. Therefore, to summarize:
\begin{itemize}
	\item $v_{1,n}\to v_{1}$ locally uniformly and in $H^1_{\loc}(\R^N)$ with $v_{1}\geq 0$ non constant;
	\item at least $d-k+1$ components tend to $0$ locally uniformly and in $H^1_{\loc}(\R^N)$.
\end{itemize}
Observe moreover that $v_{1}|_{S_{R}}\not\equiv 0$ for any $R\geq 1$: otherwise, since $v_{1}$ is subharmonic, by the maximum principle we would have $v_{1}\equiv 0$ in $B_1$, contradicting Lemma \ref{lemma: basic convergence when v(0) bounded} $(ii)$. Then we can apply Lemma \ref{lemma: minimality implies positivity} for arbitrarily large $R$ and obtain that $v_{1}>0$ in $\R^N$. Then, by Step 3, $v_1$ is harmonic and globally $\alpha$-H\"older continuous, thus constant, which gives the desired contradiction.
\end{proof}

So far, by Lemmas \ref{lemma: not d divergent}, \ref{lemma: tra k-1 e d-1 diverg} and \ref{lemma: not 1 to k-2 divergent}, we have proved that the sequence $\{\mf{v}_n(0)\}$ is bounded. To prove the contradiction that gives the uniform H\"older estimates, we need to rule out this last case. This is the aim of the next lemmas. Recall that at this stage the conclusions of Lemma \ref{lemma: basic convergence when v(0) bounded} apply to all the components.

\begin{lemma}\label{lemma: not v e M bdd}
It is not possible that both $\{\mf{v}_n(0)\}$ and $\{M_n\}$ are bounded.
\end{lemma}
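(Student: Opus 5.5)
Suppose by contradiction that both $\{\mf v_n(0)\}$ and $\{M_n\}$ are bounded. After extracting a subsequence we may assume $M_n\to M_\infty\in[0,+\infty)$. Lemma \ref{lemma: basic convergence when v(0) bounded} applies to every component. Hence $\mf v_n\to\mf v$ locally uniformly and in $H^1_{\loc}(\R^N,\R^d)$, each $v_i$ is nonnegative and globally $\alpha$-H\"older continuous, and $v_1$ is non-constant.

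The plan is to pass to the limit in the system and then invoke the Liouville-type results of Section \ref{sec:new liou}. Since $\mf v_n$ is locally uniformly bounded and $M_n$ is bounded, the interaction terms
\[
M_n v_{i,n}\sum_{\substack{J\subseteq[d]\setminus\{i\}\\|J|=k-1}}\gamma_{J,i}v_{J,n}^2
\]
converge locally uniformly to $M_\infty v_i\sum_J\gamma_{J,i}v_J^2$. By \eqref{eq: g_n to 0}, the forcing terms $g_{i,n}(x,v_{i,n})$ vanish locally uniformly. Testing the weak formulation \eqref{eq: eq satisfied by v_n} with $\varphi\in C^\infty_c(\R^N)$ and using the $H^1_{\loc}$ convergence, we obtain that $\mf v$ is a weak solution of
\[
\Delta v_i=v_i\sum_{\substack{J\subseteq[d]\setminus\{i\}\\|J|=k-1}}M_\infty\gamma_{J,i}v_J^2\quad\text{in }\R^N,\qquad i=1,\dots,d.
\]

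We split into two cases according to the value of $M_\infty$.

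\emph{Case $M_\infty=0$.} Then $v_1$ is harmonic and globally $\alpha$-H\"older continuous. Proposition \ref{thm: Liouville 1 comp}(i) forces $v_1$ to be constant, which is a contradiction.

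\emph{Case $M_\infty>0$.} The coefficients $M_L:=M_\infty\gamma_L$ are positive and satisfy the symmetry condition \eqref{eq:symmetry interaction coefficients}. Global $\alpha$-H\"older continuity together with the boundedness of $\mf v(0)$ gives the growth bound $0\le v_i(x)\le C(1+|x|^\alpha)$. Moreover $\alpha<\bar\nu\le\alpha_{k,N}/k$ by \eqref{eq:bar nu}. Corollary \ref{cor: liouville intere interazione} therefore applies and every component, in particular $v_1$, is constant. This again contradicts Lemma \ref{lemma: basic convergence when v(0) bounded}(ii).

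The only delicate step is justifying the passage to the limit in the equation. This is routine here: on compact sets all terms are uniformly bounded, and the strong $H^1_{\loc}$ convergence from Lemma \ref{lemma: basic convergence when v(0) bounded}(iv) handles the gradient term. Minimality of $\mf v_n$ is not needed in this case.
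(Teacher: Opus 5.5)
Your proof is correct and follows the paper's argument. You pass to the limit to obtain the entire system with coefficient $M_\infty\ge 0$, then apply Corollary \ref{cor: liouville intere interazione} to force $v_1$ to be constant, contradicting its non-constancy. The split into $M_\infty=0$ and $M_\infty>0$ is harmless but unnecessary, since that corollary already allows nonnegative (possibly vanishing) coefficients.
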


\begin{proof}
Suppose by contradiction that this is the case, then up to a subsequence $M_n \to M \ge 0$ and $\mf{v}_n \to \mf{v}$ locally uniformly and in $ H^1_{\loc}(\R^N)$. Recalling also \eqref{eq: g_n to 0}, we obtain that the limit function $\mf v$ solves the system
\begin{equation}\label{ent sys 308}
\begin{cases}
\Delta v_i = M v_i \displaystyle\sum_{\substack{J\subseteq [d]\setminus \{i\} \\ |J|=k-1}} \gamma_{J,i} v_{J}^2& \text{in $\R^N$},\\
v_i \ge 0 & \text{in $\R^N$}.
\end{cases}
\end{equation}
By Corollary \ref{cor: liouville intere interazione}, $v_1$ is constant, contradicting Proposition \ref{thm: Liouville 1 comp}. 
\end{proof}

It remains to consider the case when $\{\mf{v}_n(0)\}$ is bounded and $M_n \to +\infty$ up to a subsequence. 

\begin{lemma}\label{lemma: not M unb e v bdd}
It is not possible that $\{\mf{v}_n(0)\}$ is bounded and $\{M_n\}$ is unbounded.
\end{lemma}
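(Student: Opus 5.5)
Suppose by contradiction that $\{\mf v_n(0)\}$ is bounded and $M_n\to+\infty$. By Lemma \ref{lemma: basic convergence when v(0) bounded}, all components converge, up to a subsequence, locally uniformly and in $H^1_{\loc}(\R^N)$ to globally $\alpha$-H\"older continuous limits $v_i\ge0$, with $v_1$ non-constant.

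\emph{First: the limit is $k$-partially segregated.} By point $(iii)$ of that lemma, $M_n\int_K v_{i,n}\sum_J\gamma_{J,i}v_{J,n}^2\le C$ on every compact $K$. Hence $\int_K v_{i}\,v_J^2=0$ for all $|J|=k-1$, since $M_n\to\infty$. Therefore $\prod_{j\in L}v_j\equiv0$ for every $L\subseteq[d]$ with $|L|=k$.

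\emph{Second: each limit is subharmonic.} Since $\Delta v_{i,n}\ge -g_{i,n}(x,v_{i,n})\to0$, each $v_i$ satisfies $\Delta v_i\ge0$. Corollary \ref{cor: liou seg} applies to every $k$-tuple, because $\alpha<\bar\nu$. It gives a vanishing component in every $k$-tuple, so at least $d-k+1$ limits vanish identically. Since $v_1$ is non-constant, $v_1$ is not among them.

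\emph{Third: $v_1$ is harmonic in $\{v_1>0\}$.} This is exactly Step 3 of Lemma \ref{lemma: not 1 to k-2 divergent}. Near a point where $v_1\ge\delta>0$,
\[
\int_{B_\rho}|\Delta v_{1,n}|\le \delta^{-1}\int_{B_\rho}M_n\sum_{|L|=k}\gamma_L v_{L,n}^2+o(1).
\]
Each $L$ contains some $j$ with $v_j\equiv0$, so
\[
\int M_n v_{L,n}^2\le\|v_{j,n}\|_{L^\infty}\int M_nv_{j,n}\prod_{L\setminus\{j\}}v^2\to0,
\]
using the bound from the first step. Hence $\Delta v_{1,n}\to0$ in $L^1_{\loc}$ there, and passing to the limit in the weak formulation gives $\Delta v_1=0$ in $\{v_1>0\}$.

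\emph{Fourth: $v_1>0$ everywhere.} Here minimality enters through Lemma \ref{lemma: minimality implies positivity}. Its hypotheses hold for every $R>\rho\ge1$:
\begin{itemize}
\item $v_1|_{S_\rho}\not\equiv0$, otherwise subharmonicity and the maximum principle would force $v_1\equiv0$ in $B_1$, contradicting non-constancy;
\item at least $d-k+1$ components converge to $0$ uniformly and in $H^1(B_R)$.
\end{itemize}
The lemma then gives $v_1>0$ in $B_\rho$ for every $\rho$, hence on all of $\R^N$. Combined with the third step, $v_1$ is an entire harmonic, globally $\alpha$-H\"older continuous function. Proposition \ref{thm: Liouville 1 comp} then makes it constant, which is the desired contradiction.

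\emph{Main obstacle.} The delicate point is the fourth step, namely that Lemma \ref{lemma: minimality implies positivity} needs exactly $d-k+1$ vanishing components converging strongly in $H^1$. The vanishing comes from applying Corollary \ref{cor: liou seg} to all $k$-tuples. The strong convergence comes from Lemma \ref{lemma: basic convergence when v(0) bounded}$(iv)$, which is available here because all of $\{\mf v_n(0)\}$ is bounded.
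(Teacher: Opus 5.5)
Your proposal is correct and follows the paper's proof essentially step by step. The steps are:
\begin{enumerate}
\item partial segregation of the limits, from Lemma~\ref{lemma: basic convergence when v(0) bounded}$(iii)$ together with $M_n\to\infty$;
\item subharmonicity of the limits, and Corollary~\ref{cor: liou seg} to obtain at least $d-k+1$ identically vanishing limits;
\item harmonicity of $v_1$ on $\{v_1>0\}$, via the estimates of Lemma~\ref{lemma: not 1 to k-2 divergent};
\item positivity of $v_1$ from Lemma~\ref{lemma: minimality implies positivity}, and the final contradiction with Proposition~\ref{thm: Liouville 1 comp}.
\end{enumerate}
Your check that $v_1|_{S_\rho}\not\equiv0$ on the inner sphere, with $\rho\ge1$, is slightly more careful than the paper's wording, since that is the sphere on which Lemma~\ref{lemma: minimality implies positivity} actually requires non-vanishing.
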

 \begin{proof}
By contradiction, assume $\{\mf{v}_n(0)\}$ is bounded and $M_n \to +\infty$ up to subsequences. By $H^1_{\loc}$ convergence and \eqref{eq: g_n to 0}, we have that the limits $v_i$ are subharmonic functions. Moreover, by Lemma \ref{lemma: basic convergence when v(0) bounded} $(iii)$ we deduce that 
$$ \prod_{i\in J} v_{i} \equiv 0 \quad \text{in $\R^N$} \qquad \text{for every $J\subseteq [d]$ with $|J|=k$.} $$ 

Then, by Corollary \ref{cor: liou seg}, we obtain that for every $J\subseteq [d]$ with $|J|=k$ there exists $j\in J$ such that $v_{j}\equiv 0$ in $\R^N$, hence there are at least $d-k+1$ components whose limit is identically $0$ (notice that, since $v_1$ is non-constant, $i=1$ is not among these indexes). With analogous estimates as \eqref{eq: termine interaz integrale va a 0} and \eqref{eq: stima laplaciano L^1 va a zero} in Lemma \ref{lemma: not 1 to k-2 divergent} one can prove that $v_1$ is harmonic on its positivity set.

Now, since $v_1|_{S_R}\not \equiv 0$ for all $R\geq 1$ and $d-k+1$ components vanish, Lemma \ref{lemma: minimality implies positivity} gives $v_1>0$ in $\R^N$ and hence $v_1$ is harmonic, globally $\alpha$-H\"older continuous and non-constant, contradicting Proposition \ref{thm: Liouville 1 comp}.
\end{proof}

\begin{proof}[Conclusion of the proof of Theorem \ref{thm: regolarità interna}]
Lemmas \ref{lemma: basic prop}-\ref{lemma: not M unb e v bdd} show the validity of \eqref{eq:Linf alpha estimate on balls}, and thus, by a covering argument, of \eqref{eq:interior estimate thm}. 

We now turn to part $(ii)$. Suppose that $\{\mf u_\beta\}_\b$ is uniformly bounded in $L^\infty(\Omega)$ and $\bs f_\b$ satisfies \eqref{eq:growth assumption strong}. Then, clearly, $\{\mf u_\b\}_\b$ is bounded in $C^{0,\alpha}_{\loc}(\Omega)$ for every $\alpha \in (0,\bar\nu)$. The Ascoli-Arzel\`a theorem now implies the local $C^{0,\alpha}$ convergence, up to subsequences, to some limit $\tilde{\mf{u}}\in C^{0,\alpha}_{\loc}(\Omega)$. 

The other properties can be now easily proved. By testing the equation of $u_{i,\beta}$ with a cut-off function $\varphi \in C^\infty_c(\Omega)$, we obtain
\begin{equation}\label{eq:bound termini inter}
\beta \int_{\Omega} u_{i,\beta} \varphi \sum_{\substack{J\subseteq [d]\setminus \{i\} \\ |J|=k-1}} \gamma_{J,i} u_{J,\b}^2 \,dx = \int_{\Omega} u_{i,\beta} \Delta \varphi \,dx + \int_{\Omega} f_{i,\b}(x,u_{i,\b}(x)) \varphi \, dx \le C.
\end{equation}
Thus, by local uniform convergence as $\beta \to +\infty$, the limit $\tilde{\mf{u}}$ satisfies the partial segregation condition

\begin{equation}\label{eq:partial segregation u tilde}
	\prod_{i\in J} \tilde u_{i} \equiv 0 \quad \text{in $\Omega$} \quad \text{for any $J\subseteq [d]$ with $|J|=k$}.
\end{equation}
To prove the strong $H^1_{\loc}$ convergence, we first test the equation of $u_{i,\beta}$ with $u_{i,\beta} \varphi^2$, where $\varphi \in C^\infty_c(\Omega)$ is any nonnegative cut-off function, obtaining
\begin{equation*}
	\int_{\Omega} \Big(|\nabla u_{i,\b}|^2 \varphi^2 +2u_{i,\b} \varphi \nabla u_{i,\b}\cdot \nabla \varphi +\b \varphi^2 \sum_{\substack{J\subseteq [d] \\ |J|=k}} \gamma_J u_{J,\b}^2 - f_{i,\b}(x,u_{i,\b}(x))u_{i,\b}\varphi^2 \Big) dx = 0,
\end{equation*} 
whence
\[
\int_{\Omega} |\nabla u_{i,\b}|^2 \varphi^2\,dx \le 4 \int_{\Omega}  u_{i,\b}^2 |\nabla \varphi|^2\,dx+\int_{\Omega} f_{i,\b}(x,u_{i,\b})u_{i,\b}\varphi^2 \, dx.
\]
This shows the boundedness of $\{u_{i,\b}\}$ in $H^1_{\loc}(\Omega)$, and hence up to a subsequence $u_{i,\b} \weak \tilde u_i$ weakly in $H^1_{\loc}(\Omega)$ as $\b\to+\infty$. 
Now, testing the equation of $u_{i,\beta}$ with $(u_{i,\beta}-\tilde u_i) \varphi$ and integrating by parts, we obtain
\begin{multline*}
	\left|\int_{\Omega} \varphi \nabla u_{i,\b} \cdot \nabla (u_{i,\b}-\tilde u_{i})  \, dx \right|\\
	\le \|u_{i,\b}-\tilde u_{i}\|_{L^\infty(\mathrm{supp}(\varphi))} \int_{\Omega} \Big( |\nabla u_{i,\b} \cdot \nabla \varphi| + \b \varphi \, u_{i,\b} \sum_{\substack{J\subseteq [d]\setminus \{i\} \\ |J|=k-1}} \gamma_{J,i} u_{J,\b}^2+|f_{i,\b}(x,u_{i,\b})|\varphi \Big)dx,
\end{multline*}
and the right hand side goes to zero by local uniform convergence and \eqref{eq:bound termini inter}, proving strong convergence in $H^1_{\loc}(\Omega)$. 

Now notice that, by \eqref{eq:partial segregation u tilde}, for any $J\subseteq [d]$, $|J|=k$, we have
\begin{equation*}
	\bigcup_{j\in J} \{\tilde u_j=0\} = \Omega.
\end{equation*}
Then, for any compact $K\subset \Omega$ and any such $J$,
\[
\beta \int_{K} \gamma_{J} \prod_{j\in J} u_{j,\beta}^2\,dx \leq \b \gamma_{J} \sum_{j\in J} \|u_{j,\b}\|_{L^\infty(K\cap \{\tilde u_{j}=0\})} \int_{K\cap \{\tilde u_{j}=0\}} u_{j,\b} \prod_{j'\in J\setminus \{j\}} u_{j',\b}^2 \to 0,
\]
and this by \eqref{eq:bound termini inter} and local uniform convergence concludes the proof.
\end{proof}

\section{Global H\"older bounds}\label{sec: thm regolarità bordo}

The aim of this section is to prove Theorem \ref{thm: regolarità al bordo}. In the first subsection, we establish the global H\"older estimate \eqref{eq:global estimate} through a blow-up analysis, with particular attention to sequences concentrating near the boundary. In the second subsection, under the uniform growth assumption, we derive uniform $L^\infty$ bounds from the assumed $H^1$ bounds by combining a Brezis--Kato-type iteration with a Stampacchia argument, and then conclude the proof of the theorem.

\subsection{Proof of \eqref{eq:global estimate}}
The proof of the global estimate is based on a blow-up analysis around pairs of points where the H\"older quotient becomes large. The main additional feature with respect to the interior estimate is that the concentration scale may be comparable with the distance from the boundary, so that the limiting geometry of the rescaled domains has to be taken into account.

Fix $\alpha\in(0,\bar\nu)$. We argue by contradiction and suppose that, for some boundary datum $\bs\psi$ and some family $\{\bs f_\b\}_{\b>0}$ satisfying the assumptions of the theorem, there exist $\{\b_n\}_n\subset (0,+\infty)$ and corresponding nonnegative minimizers
\[
\mf u_n:=\mf u_{\b_n}
\]
of $J_{\b_n,\bs f_{\b_n}}(\cdot,\Omega)$ on $\mathcal U_{\bs\psi}$ such that
\[
\frac{
	[\mf u_n]_{C^{0,\alpha}(\overline\Omega)}
}{
	\|\mf u_n\|_{L^\infty(\Omega)}+R_n
}
>n
\qquad\text{for every }n\in\N,
\]
where, as in the proof of Theorem \ref{thm: regolarità interna}, we set
\[
R_n
:=
\sup\left\{
|f_{i,\b_n}(x,u)|:
x\in\Omega,\ 
u\in[0,\sup_\Omega u_{i,\b_n}],\
i=1,\dots,d
\right\}.
\]
In particular, defining
\[
L_n
:=
[\mf u_n]_{C^{0,\alpha}(\overline\Omega)}
=
\sup_{i=1,\dots,d}
\sup_{\substack{x\neq y\\x,y\in\overline\Omega}}
\frac{|u_{i,n}(x)-u_{i,n}(y)|}{|x-y|^\alpha},
\]
we have
\begin{equation}\label{eq:L_n global}
	L_n
	>
	n\left(
	\|\mf u_n\|_{L^\infty(\Omega)}+R_n
	\right).
\end{equation}

For every $n$, choose a component and a pair of points realizing the above H\"older quotient. After relabelling the components, we may write
\[
L_n
=
\frac{|u_{1,n}(x_n)-u_{1,n}(y_n)|}{|x_n-y_n|^\alpha}
\]
for some $x_n,y_n\in\overline\Omega$, $x_n\neq y_n$. Setting
\[
r_n:=|x_n-y_n|,
\]
estimate \eqref{eq:L_n global} gives
\[
r_n^\alpha
\leq
\frac{2\|\mf u_n\|_{L^\infty(\Omega)}}{L_n}
<\frac{2}{n},
\]
and hence $r_n\to0$.

We rescale the sequence around $x_n$ at this vanishing scale by setting
\[
v_{i,n}(x)
:=
\frac{u_{i,n}(x_n+r_nx)}{L_nr_n^\alpha},
\qquad
x\in\Omega_n
:=
\frac{\Omega-x_n}{r_n}.
\]
The subsequent analysis is determined by the asymptotic position of $x_n$ relative to the boundary at the scale $r_n$, namely by the behaviour of
\[
\frac{\dist(x_n,\partial\Omega)}{r_n}.
\]

If this quantity tends to $+\infty$, the boundary disappears in the blow-up limit and the domains $\Omega_n$ exhaust $\R^N$. In this case the argument reduces to the interior analysis developed in Section \ref{sec: thm regolarità interna}. Indeed, the original functions satisfy the local minimality property \eqref{eq:local minimality} on every $\Omega'\ssubset\Omega$, while in the present rescaling the sequence $\mf v_n$ simultaneously satisfies the normalized H\"older bound and the rescaled system. We can therefore apply the contradiction argument of the interior case.

It remains to consider the case in which $\dist(x_n,\partial\Omega)/r_n$ stays bounded. Since $\Omega$ is of class $C^1$, after passing to a subsequence the rescaled domains $\Omega_n$ converge to a half-space $\Omega_\infty$, with $0\in\overline{\Omega_\infty}$. The rest of this subsection is devoted to excluding this second possibility. We begin by collecting the basic properties of the rescaled sequence, which are the boundary counterparts of those established in Lemma \ref{lemma: basic prop} and that can be proved analogously.

\begin{lemma}\label{lemma: basic prop global}
	Under the previous notation,
	\begin{itemize}
		\item[($i$)] the sequence $\{{\mf{v}}_n\}$ has uniformly bounded $\alpha$-H\"older semi-norm in $\Omega_n$, and in particular
		\[
		\sup_{i=1,\dots,d} \sup_{\substack{x \neq y \\ x,y \in \Omega_n}}    \frac{ | v_{i,n}(x)- v_{i,n}(y)|}{|x-y|^{\alpha}} = \frac{ | v_{1,n}(0)- v_{1,n}\left(\frac{y_n-x_n}{r_n}\right)|}{\left|\frac{y_n-x_n}{r_n}\right|^{\alpha}} = 1
		\]
		for every $n$.
		\item[($ii$)] $v_{i,n}$ is a weak solution of 
		\begin{equation}\label{eq: eq satisfied by v_n global}
			-\Delta v_{i,n} = -M_n v_{i,n} \sum_{\substack{J\subseteq [d]\setminus \{i\}\\ |J|=k-1}} \gamma_{J,i} v_{J,n}^{2}+g_{i,n}(x,v_{i,n}), \quad v_{i,n}\geq 0 \qquad \text{in $\Omega_n$},
		\end{equation}
		where
		\[
		M_n:= \beta_n r_n^{2+2(k-1)\alpha} L_n^{2(k-1)} \quad \text{ and } \quad g_{i,n}(x,u):=\frac{r_n^{2-\alpha}}{L_n}f_{i,\b_n}(x_n+r_n x,r_n^\alpha L_n u).
		\] 
		Furthermore, 
		\begin{equation}\label{eq: g_n to 0 global}
			g_{i,n}(x,v_{i,n}(x))\to 0 \quad \text{locally uniformly in $\Omega_\infty$ as $n\to\infty$ for every $i=1,\dots,d$.}
		\end{equation}
		
		\item[($iii$)] $\mf{v}_n$ is a minimizer of \eqref{eq: eq satisfied by v_n} with respect to variations with compact support, namely for every $\Omega' \ssubset \Omega_\infty$
		\[
		J_{M_n,\bs g_n}(\mf{v}_n,\Omega') \le J_{M_n,\bs g_n}(\mf{v}_n+ \bs{\varphi},\Omega') \quad \forall \bs{\varphi} \in H^1_0(\Omega', \R^d),
		\]
		for sufficiently large $n$, where $J_{M_n,\bs g_n}(\cdot, \Omega)$ is defined as in \eqref{eq:def J_beta}.
	\end{itemize}
\end{lemma}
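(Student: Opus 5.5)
The plan is to mirror the proof of Lemma \ref{lemma: basic prop}, which is simpler here because no cut-off function $\eta$ appears: $\mf v_n$ plays the role of both $\mf v_n$ and $\bar{\mf v}_n$ of the interior case. I will verify the three items in order by direct change of variables, using only the definition of $L_n$, $r_n$ and the contradiction hypothesis \eqref{eq:L_n global}.

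For ($i$), I will compute, for $x,y\in\Omega_n$,
\[
\frac{|v_{i,n}(x)-v_{i,n}(y)|}{|x-y|^\alpha}=\frac{|u_{i,n}(x_n+r_nx)-u_{i,n}(x_n+r_ny)|}{L_n r_n^\alpha\,|x-y|^\alpha}=\frac{|u_{i,n}(x')-u_{i,n}(y')|}{L_n|x'-y'|^\alpha},
\]
with $x'=x_n+r_nx$ and $y'=x_n+r_ny$ ranging over $\Omega$. Taking suprema gives at most $1$, by the definition of $L_n$ as the global seminorm on $\overline\Omega$. Equality is attained at the pair $0$, $(y_n-x_n)/r_n$, by the choice of $x_n,y_n$ and since $|y_n-x_n|/r_n=1$. (If $x_n$ or $y_n$ lies on $\partial\Omega$, the quotient is understood on $\overline{\Omega_n}$, using continuity up to the boundary.)

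For ($ii$), I will set $u_{i,n}(z)=L_nr_n^\alpha v_{i,n}((z-x_n)/r_n)$, so that $\Delta_z u_{i,n}=L_n r_n^{\alpha-2}\Delta v_{i,n}$. Substituting into the equation for $u_{i,n}$ and noting that each $u_J^2$ with $|J|=k-1$ contributes the factor $(L_nr_n^\alpha)^{2(k-1)}$, I will multiply through by $r_n^{2-\alpha}/L_n$. This yields exactly the coefficient $M_n=\beta_n r_n^{2+2(k-1)\alpha}L_n^{2(k-1)}$ and the stated $g_{i,n}$. The weak formulation transforms in the same way with rescaled test functions. For \eqref{eq: g_n to 0 global}, I will note that $x_n+r_nx\in\Omega$ whenever $x\in\Omega_n$, and that $u_{i,n}(x_n+r_nx)\in[0,\sup_\Omega u_{i,n}]$. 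Hence $|g_{i,n}(x,v_{i,n}(x))|\le r_n^{2-\alpha}R_n/L_n< r_n^{2-\alpha}/n\to0$ by \eqref{eq:L_n global}. This bound is uniform on all of $\Omega_n$, and a compact subset of $\Omega_\infty$ lies in $\Omega_n$ for $n$ large thanks to the $C^1$ convergence of $\Omega_n$ to the half-space.

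For ($iii$), I will take $\Omega'\ssubset\Omega_\infty$. For $n$ large, $\Omega'\ssubset\Omega_n$ by the same convergence of domains, so $x_n+r_n\Omega'\subset\Omega$ and it is an admissible set. Any $\bs\varphi\in H^1_0(\Omega')$ rescales to $\tilde{\bs\varphi}(z)=L_nr_n^\alpha\bs\varphi((z-x_n)/r_n)\in H^1_0(x_n+r_n\Omega')$, extended by zero to $\Omega$. Since $\mf u_n$ is a global minimizer with fixed trace, it is in particular minimal under compactly supported variations, so $J_{\b_n,\bs f_{\b_n}}(\mf u_n)\le J_{\b_n,\bs f_{\b_n}}(\mf u_n+\tilde{\bs\varphi})$ on $x_n+r_n\Omega'$.

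To finish ($iii$), I will check that every term of the functional rescales by the same factor $L_n^2r_n^{2\alpha+N-2}$:
\begin{itemize}
\item the gradient term, by the chain rule;
\item the interaction term, by the definition of $M_n$;
\item the potential term, since $G_{i,n}(x,u)=\int_0^u g_{i,n}(x,s)\,ds=r_n^{2-2\alpha}L_n^{-2}F_{i,\b_n}(x_n+r_nx,L_nr_n^\alpha u)$.
\end{itemize}
Dividing out this common factor gives the claim. The only mildly delicate point, and the main obstacle I expect, is the bookkeeping in the domain convergence: I must make sure that compact subsets of $\Omega_\infty$ eventually lie inside $\Omega_n$, which uses the $C^1$ regularity of $\partial\Omega$ and the boundedness of $\dist(x_n,\partial\Omega)/r_n$. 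Everything else is the routine scaling computation already done in Lemma \ref{lemma: basic prop}.
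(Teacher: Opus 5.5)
Your proposal is correct and follows essentially the same route as the paper, which only remarks that these properties are proved analogously to Lemma~\ref{lemma: basic prop}, i.e.\ by a direct change of variables. Your scaling computations for $M_n$, $g_{i,n}$, $G_{i,n}$ and the common factor $L_n^2r_n^{2\alpha+N-2}$ are consistent, and you correctly single out the one extra ingredient: compact subsets of $\Omega_\infty$ eventually lie in $\Omega_n$.
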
 

An analogous of Lemma \ref{lemma: basic convergence when v(0) bounded} holds when a component is bounded at the origin, with the same proof.

\begin{lemma}\label{lemma: basic convergence when v(0) bounded global}
	Suppose that $\{v_{i,n}(0)\}$ is bounded. Then there exists a function $v_i \in C^{0}(\Omega_\infty) \cap H^1_{\loc}(\Omega_\infty)$, globally $\alpha$-H\"older continuous, such that, up to a subsequence: 
	\begin{itemize}
		\item[($i$)] $v_{i,n} \to v_i$
		locally uniformly on $\Omega_\infty$; 
		\item[($ii$)] $v_1$ is non-constant in $B_1\cap \Omega_{\infty}$;
		\item[($iii$)] for every $K \subset \Omega_\infty$ compact there exists $C>0$ such that
		\[
		M_n \int_{K} \Bigg(\sum_{\substack{J\subseteq [d]\setminus \{i\} \\ |J|=k-1}}\gamma_{J,i} v_{J,n}^2 \Bigg) v_{i,n}\,dx \le C
		\]
		\item[($iv$)] $v_{i,n} \to v_i$ strongly in $H^1_{\loc}(\Omega_{\infty})$.
	\end{itemize}
\end{lemma}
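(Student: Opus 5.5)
The plan is to repeat the proof of Lemma \ref{lemma: basic convergence when v(0) bounded} verbatim, replacing $\R^N$ by the limit half-space $\Omega_\infty$ and $\bar{\mf v}_n$ by $\mf v_n$ itself. No cut-off is involved in the global rescaling, so Lemma \ref{lemma: basic prop global}-$(i)$ already gives $[v_{i,n}]_{C^{0,\alpha}(\Omega_n)}\le 1$. One preliminary remark concerns compacts. Since $\Omega$ is $C^1$ and $\Omega_n\to\Omega_\infty$, every compact $K\subset\Omega_\infty$ is contained in $\Omega_n$ for $n$ large. Moreover, $\dist(K,\partial\Omega_n)$ is bounded below uniformly in $n$. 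Hence all local statements make sense for $n$ large, exactly as in the interior case.

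For $(i)$, I combine the bound $|v_{i,n}(x)|\le |v_{i,n}(0)|+|x|^\alpha$ on $\Omega_n$ (if $0\in\partial\Omega_n$, I use continuity up to the boundary) with the uniform H\"older bound. Ascoli--Arzel\`a and a diagonal argument over an exhaustion of $\Omega_\infty$ by compacts then give local uniform convergence to some $v_i$. This limit is globally $\alpha$-H\"older with seminorm at most $1$ on $\Omega_\infty$. In fact, since the bounds extend to $\overline{\Omega_n}$, the convergence also holds locally uniformly on $\overline{\Omega_\infty}$, and $v_i$ extends continuously to $\overline{\Omega_\infty}$.

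For $(ii)$, set $z_n:=(y_n-x_n)/r_n$, so $|z_n|=1$ and $z_n\in\overline{\Omega_n}$. Up to a subsequence $z_n\to z\in\overline{\Omega_\infty}\cap S_1$, and by Lemma \ref{lemma: basic prop global}-$(i)$, $|v_{1,n}(0)-v_{1,n}(z_n)|=1$. The main subtlety is that $0$ and $z$ may lie on $\partial\Omega_\infty$. This is exactly why I use uniform convergence on $\overline{B_1}\cap\overline{\Omega_\infty}$ rather than only on compacts of the open half-space. To pass to the limit, I compare $v_{1,n}$ at boundary-near points with $v_{1,n}$ at nearby points of $\Omega_n$ converging to interior points of $\Omega_\infty$; the uniform H\"older bound controls the error. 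This yields $|v_1(0)-v_1(z)|=1$, so $v_1$ is non-constant on $\overline{B_1\cap\Omega_\infty}$. By continuity, $v_1$ is then non-constant on $B_1\cap\Omega_\infty$. I expect this step to be the only point needing care; everything else is routine.

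For $(iii)$, I test the equation \eqref{eq: eq satisfied by v_n global} with $\varphi\in C_c^\infty(\Omega_\infty)$, $\varphi\ge0$, $\varphi\equiv1$ on $K$. The support lies in $\Omega_n$ for $n$ large, so no boundary terms appear and I get
\[
M_n\int v_{i,n}\varphi\sum_{J}\gamma_{J,i}v_{J,n}^2\,dx=\int\big(v_{i,n}\Delta\varphi+g_{i,n}(x,v_{i,n})\varphi\big)\,dx\le C,
\]
using the local uniform bound from $(i)$ and \eqref{eq: g_n to 0 global}. For $(iv)$, I test with $v_{i,n}\varphi^2$ to get a Caccioppoli bound, which gives weak $H^1_{\loc}(\Omega_\infty)$ convergence. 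I then test with $(v_{i,n}-v_i)\varphi$. The right-hand side is bounded by $\|v_{i,n}-v_i\|_{L^\infty(\supp\varphi)}$ times quantities bounded by $(iii)$, \eqref{eq: g_n to 0 global} and the $H^1_{\loc}$ bound. This yields convergence of $\int\varphi|\nabla v_{i,n}|^2$, hence strong $H^1_{\loc}(\Omega_\infty)$ convergence.
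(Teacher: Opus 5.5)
Your proposal is correct and follows the paper's route: the paper states that the proof of Lemma \ref{lemma: basic convergence when v(0) bounded} carries over unchanged. That proof is Ascoli--Arzel\`a for $(i)$, testing with a cut-off for $(iii)$, and a Caccioppoli bound followed by testing with $(v_{i,n}-v_i)\varphi$ for $(iv)$, which is exactly what you do. In $(ii)$ you approximate $0$ and the limit of $(y_n-x_n)/r_n$, both of which may lie on $\partial\Omega_\infty$, by interior points of $B_1\cap\Omega_\infty$ and use the uniform H\"older bound; this fills in a detail the paper leaves implicit, and it is the right way to do it, since $v_{i,n}$ need not be defined on all of $\overline{\Omega_\infty}$ without the Lipschitz extension of Lemma \ref{lemma: limit on boundary}.
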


We now recall a useful result that gives information about the behaviour of the limit functions at the boundary of the half-space (see \cite[Lemma 4.3]{SoaTer22}).

\begin{lemma}\label{lemma: limit on boundary}
It is possible to extend $\mf{v}_n$ outside $\Omega_n$ in a Lipschitz fashion, in such a way that:
\begin{itemize}
\item[($i$)] If $\{v_{i,n}(0)\}$ is bounded, then 
$v_{i,n} \to v_i$ in $C^{0,\alpha'}_{\loc}(\R^N)$ for every $0<\alpha'<\alpha$, up to a subsequence; moreover, the limit function $v_i$ attains a constant value on the boundary $\pa \Omega_\infty$.
\item[($ii$)] If $\{v_{i,n}(0)\}$ is unbounded, then $w_{i,n}(x) := v_{i,n}(x)-v_{i,n}(0)$ converges to $w_i$ in $C^{0,\alpha'}_{\loc}(\R^N)$ for every $0<\alpha'<\alpha$, up to a subsequence; moreover, the limit function $w_i$ attains a constant value on the boundary $\pa \Omega_\infty$. Moreover, $w_1$ is non-constant in $B_1\cap \Omega_\infty$.
\end{itemize}
\end{lemma}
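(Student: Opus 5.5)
The plan is to extend $\mf v_n$ across $\pa\Omega_n$ using the Lipschitz boundary datum $\bs\psi$, and then apply Ascoli--Arzel\`a to the extended family. The relevant facts are that $\mf u_n=\bs\psi$ on $\pa\Omega$ and $\bs\psi$ is Lipschitz.

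\emph{Extension.} First extend $\bs\psi$ to a Lipschitz function $\Psi$ on $\R^N$ with the same Lipschitz constant $\ell$ (McShane, componentwise). Then set $\hat u_{i,n}:=u_{i,n}$ in $\overline\Omega$ and $\hat u_{i,n}:=\Psi_i$ outside $\Omega$. We need a H\"older bound for $\hat u_{i,n}$ across the boundary. Take $x\in\Omega$, $y\notin\Omega$, and let $z$ be a point of $\pa\Omega$ on the segment $[x,y]$. Then
\[
|\hat u_{i,n}(x)-\hat u_{i,n}(y)|\le |u_{i,n}(x)-\psi_i(z)|+|\Psi_i(z)-\Psi_i(y)|\le L_n|x-z|^\alpha+\ell|z-y|.
\]
After rescaling, $\hat v_{i,n}(x)=\hat u_{i,n}(x_n+r_nx)/(L_nr_n^\alpha)$ satisfies
\[
|\hat v_{i,n}(x)-\hat v_{i,n}(y)|\le |x-y|^\alpha+\frac{\ell r_n^{1-\alpha}}{L_n}|x-y|.
\]
The factor $\ell r_n^{1-\alpha}/L_n$ tends to $0$ (note $L_n\to\infty$ unless all terms vanish, and in any case $r_n\to0$). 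So on every compact set $K$, the extended functions have H\"older seminorm of order $\alpha$ bounded by $1+o(1)$ up to a bounded additive term. More precisely, on $K$ we get an $\alpha$-H\"older bound with constant $1+o(1)\,\mathrm{diam}(K)^{1-\alpha}$. Because the Lipschitz part vanishes at this scale, I call this a ``Lipschitz fashion'' extension.

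\emph{Part (i).} If $v_{i,n}(0)$ is bounded, the extended $\hat v_{i,n}$ are equibounded and equi-$\alpha$-H\"older on compacts. By Ascoli--Arzel\`a and the compact embedding $C^{0,\alpha}\hookrightarrow C^{0,\alpha'}$ on compacts, they converge in $C^{0,\alpha'}_{\loc}(\R^N)$ for every $\alpha'<\alpha$ (by interpolation). For the constancy on $\pa\Omega_\infty$, take $p,q\in\pa\Omega_\infty$. By $C^1$ regularity of $\Omega$ there are $p_n,q_n\in\pa\Omega_n$ with $p_n\to p$ and $q_n\to q$, and these are images of boundary points $P_n,Q_n\in\pa\Omega$ with $|P_n-Q_n|\le Cr_n$. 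Then
\[
|v_{i,n}(p_n)-v_{i,n}(q_n)|=\frac{|\psi_i(P_n)-\psi_i(Q_n)|}{L_nr_n^\alpha}\le \frac{C\ell r_n^{1-\alpha}}{L_n}\to0.
\]
Uniform convergence then gives $v_i(p)=v_i(q)$.

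\emph{Part (ii).} Apply the same argument to $w_{i,n}=\hat v_{i,n}-\hat v_{i,n}(0)$, which vanish at $0$ and are therefore equibounded on compacts. Non-constancy of $w_1$ in $B_1\cap\overline{\Omega_\infty}$ is inherited from the normalization in Lemma~\ref{lemma: basic prop global}(i). The points $0$ and $(y_n-x_n)/r_n$ lie in $\overline{\Omega_n}\cap \overline{B_1}$ and realize a difference of $1$. Passing to the limit, they converge to points of $\overline{\Omega_\infty}\cap\overline{B_1}$ where $w_1$ differs by $1$. By continuity, $w_1$ is then non-constant on $B_1\cap\Omega_\infty$.

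The main delicate step is the boundary H\"older estimate across $\pa\Omega_n$: one has to check that the segment argument works without a convexity assumption. I would handle this by using that the $C^1$ domain is locally a graph, so that the nearest boundary point is at distance at most $|x-y|$. For $|x-y|$ large at scale $r_n$, the oscillation of $\Psi$ is also controlled linearly with a vanishing coefficient.
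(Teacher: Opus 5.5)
Your proposal is correct. The paper gives no proof of its own here and simply quotes \cite[Lemma 4.3]{SoaTer22}. Your argument is the standard proof of that fact: extend the Lipschitz boundary datum, note that the rescaled Lipschitz constant $\ell r_n^{1-\alpha}/L_n$ tends to $0$, apply Ascoli--Arzel\`a with interpolation, and use that the oscillation of $\mf v_n$ along $\pa\Omega_n$ vanishes.

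Two small remarks. First, the segment from a point of $\Omega$ to a point outside $\Omega$ always meets $\pa\Omega$ by connectedness, so your ``delicate step'' needs no convexity or graph structure. Second, $\ell/L_n\to0$ is best justified by extending only the trace $\bs\psi|_{\pa\Omega}$, by zero if it vanishes, and otherwise using $L_n>n\|\mf u_n\|_{L^\infty(\Omega)}\ge n\max_i\max_{\pa\Omega}\psi_i$.
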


We now aim at reaching a contradiction in any possible behaviour of $\{v_{i,n}(0)\}$ and $\{M_n\}$. The first preliminary observation is crucial:

\begin{lemma}\label{lemma: at least d-k+1 vanishing global}
	In this setting, we have that $|I_F|\geq d-k+1$ and at least $d-k+1$ components $v_{i}$ are such that $v_{i}=0$ in $\Omega_{\infty}$.
\end{lemma}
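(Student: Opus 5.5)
The plan is to use the partial segregation of the boundary datum $\bs\psi$ near the points $x_n$ to select $d-k+1$ indices whose rescaled boundary values vanish. A Phragm\'en--Lindel\"of argument in the half-space $\Omega_\infty$ will then show that the corresponding limits vanish identically.

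\emph{Step 1: choice of the indices.} Since $\dist(x_n,\pa\Omega)/r_n$ is bounded, pick $z_n\in\pa\Omega$ with $|x_n-z_n|\leq Cr_n$. By \eqref{PSC} applied at $z_n$, at most $k-1$ of the values $\psi_1(z_n),\dots,\psi_d(z_n)$ are positive, so at least $d-k+1$ of them vanish. There are only finitely many subsets of $[d]$, so after passing to a subsequence we find a fixed set $Z\subseteq[d]$ with $|Z|=d-k+1$ and $\psi_i(z_n)=0$ for every $i\in Z$ and every $n$.

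\emph{Step 2: smallness of the rescaled boundary data and $Z\subseteq I_F$.} Let $i\in Z$ and $x\in\pa\Omega_n\cap B_R$. Since $u_{i,n}=\psi_i$ on $\pa\Omega$ and $\psi_i$ is Lipschitz with $\psi_i(z_n)=0$, we get
\[
v_{i,n}(x)=\frac{\psi_i(x_n+r_nx)}{L_nr_n^\alpha}\leq \frac{\mathrm{Lip}(\psi_i)\,(R+C)\,r_n^{1-\alpha}}{L_n}\to0 .
\]
Here we use $r_n\to0$, $\alpha<1$, and $L_n\to+\infty$. The divergence of $L_n$ holds at least when $\|\mf u_n\|_{L^\infty}$ stays away from $0$. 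In general I would use instead $L_n r_n^{\alpha}\geq |u_{1,n}(x_n)-u_{1,n}(y_n)|$ together with the Lipschitz bound on $\psi$ to handle the ratio; this is a point to be checked carefully. The uniform H\"older bound of Lemma \ref{lemma: basic prop global}$(i)$ and the boundedness of $\dist(0,\pa\Omega_n)$ then give that $v_{i,n}(0)$ is bounded. Hence $Z\subseteq I_F$, and in particular $|I_F|\geq d-k+1$. Moreover, by Lemma \ref{lemma: limit on boundary}$(i)$, the limit $v_i$ is globally $\alpha$-H\"older and continuous up to $\pa\Omega_\infty$, with constant boundary value, which by the above estimate is $0$.

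\emph{Step 3: vanishing of the limits.} For $i\in Z$, the equation \eqref{eq: eq satisfied by v_n global} gives $-\Delta v_{i,n}\leq g_{i,n}(x,v_{i,n})$. Combining \eqref{eq: g_n to 0 global} with the $H^1_{\loc}(\Omega_\infty)$ convergence of Lemma \ref{lemma: basic convergence when v(0) bounded global}, we obtain $\Delta v_i\geq0$ weakly in $\Omega_\infty$. Thus $v_i$ is nonnegative, continuous on $\overline{\Omega_\infty}$, vanishes on $\pa\Omega_\infty$, and satisfies $v_i(x)\leq C(1+|x|^\alpha)$.

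Up to a rigid motion, assume $\Omega_\infty=\{x_N>-a\}$, and shift so that the boundary is $\{x_N=0\}$. On the half-ball $B_R^+$, let $h_R$ be the harmonic function equal to $0$ on the flat part and to $C(1+R^\alpha)$ on the spherical part. By the comparison principle, applied on slightly shrunken domains and then letting the shrinking vanish, which is legitimate by continuity up to the boundary, we get $v_i\leq h_R$ on $B_R^+$. By scaling and the boundary Lipschitz estimate for $h_R$ near the flat part,
\[
h_R(x)\leq C(1+R^\alpha)\,\frac{x_N}{R}\qquad\text{for } x\in B^+_{R/2}.
\]
Letting $R\to\infty$ with $\alpha<1$ yields $v_i\equiv0$ in $\Omega_\infty$ for every $i\in Z$, which proves the statement.

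I expect the main obstacle to be the justification of the boundary behaviour: that the limit takes the value $0$ on $\pa\Omega_\infty$, and that the comparison with $h_R$ holds up to the flat boundary, given that $H^1$ convergence is only available on compact subsets of $\Omega_\infty$. I would rely on the Lipschitz extension of Lemma \ref{lemma: limit on boundary} to obtain uniform convergence up to $\pa\Omega_\infty$. The comparison would be carried out on $B_R^+\cap\{x_N>\eps\}$ with boundary datum $\max_{\{x_N=\eps\}\cap B_R}v_i$, which tends to $0$ as $\eps\to0$.
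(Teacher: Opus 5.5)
Your proof is correct, and its first half matches the paper's. The paper also uses \eqref{PSC} at nearby boundary points to fix $d-k+1$ indices, bounds $v_{i,n}(0)$ through the H\"older seminorm, and gets $v_i=0$ on $\pa\Omega_\infty$ from the constancy in Lemma \ref{lemma: limit on boundary}.

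Your uniform smallness estimate on $\pa\Omega_n\cap B_R$, and with it the worry about $L_n\to+\infty$, is not needed. Since $v_{i,n}((z_n-x_n)/r_n)=0$ exactly, $v_i$ vanishes at a limit point in $\pa\Omega_\infty$, and constancy on $\pa\Omega_\infty$ does the rest. The estimate does hold anyway: either $\bs\psi\equiv0$ on $\pa\Omega$, or $\|\mf u_n\|_{L^\infty}$ is bounded below and \eqref{eq:L_n global} forces $L_n\to+\infty$.

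The genuine difference is the final step.
\begin{itemize}
\item The paper extends $v_i$ by zero and checks subharmonicity on $\R^N$ via the sub-mean value property at boundary points. It then reflects evenly across $\pa\Omega_\infty$ and applies the two-phase Liouville theorem, Proposition \ref{prop: Liou sys 2}, to the segregated pair.
\item You instead run a Phragm\'en--Lindel\"of comparison with the rescaled half-ball barrier. Its boundary Lipschitz bound gives $v_i(x)\leq C(1+R^\alpha)x_N/R\to0$ because $\alpha<1$.
\end{itemize}
Your route is more elementary: it uses only the maximum principle and Schwarz reflection, with no Alt--Caffarelli--Friedman machinery. The paper's route is shorter given the tools already in place, and it is reused verbatim in Lemma \ref{lemma: no diverging comp global}.

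Finally, the $\eps$-shrinking of the domain is unnecessary. At the corner $\pa B_R\cap\pa\Omega_\infty$, where $h_R$ is discontinuous, $v_i$ is continuous and equal to $0$ while $h_R\geq0$. Hence $\limsup(v_i-h_R)\leq0$ at every boundary point of $B_R^+$, and the comparison applies directly.
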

\begin{proof}
	Fix $R$ big enough such that $B_R(0)\cap \partial\Omega_\infty \neq \emptyset$, and let $z_n\in B_R(0)\cap \partial\Omega_n $ for every $n$ large. By the partial segregation condition \eqref{PSC}, clearly satisfied also by the rescaled functions $\psi_{i,n}$, at least $d-k+1$ components of $\mf v_n$ vanish at $x=z_n$. Since the set of indexes is finite, we can assume up to subsequences that such components do not depend on $n$, so for some $i_1,\dots, i_{d-k+1} \in \{1,\dots,d\}$ we have
	$$ v_{i_j,n}(z_n)=\psi_{i_j,n}(z_n)=0 \qquad \text{for every $n$ and $j=1,\dots,d-k+1.$} $$
	By uniform boundedness of the $\alpha$-H\"older seminorm, it follows that
	$$ v_{i_j,n}(0)\leq R^\alpha \qquad \text{for every $n$ and $j=1,\dots,d-k+1,$}  $$
	and this concludes the first part of the statement.
	Observe now that, for such indexes $i_1,\dots, i_{d-k+1}$, since $z_n\in B_R(0)\cap \partial\Omega_n$ local uniform convergence implies that, up to subsequences,
	$$ v_{i_j}(\bar x)=0 \qquad \text{for $j=1,\dots,d-k+1$}$$
	for some $\bar x\in \partial\Omega_\infty$. Since, by Lemma \ref{lemma: limit on boundary}, $v_{i_j}$ is constant on $\partial\Omega_{\infty}$, we conclude that $$v_{i_j}=0 \qquad \text{on $\partial\Omega_{\infty}$} \quad \text{for $j=1,\dots,d-k+1$}. $$
	Now, by Lemmas \ref{lemma: basic prop global} and \ref{lemma: basic convergence when v(0) bounded global}, we know that $v_i$ is subharmonic in $\Omega_\infty$ and $\alpha$-H\"older continuous up to the boundary.
	
	Extend $v_{i_j}$ by continuity as the zero function outside $\Omega_\infty$: we claim that such extension, that we still denote by $v_{i_j}$, is subharmonic on $\R^N$. Indeed, it is clearly subharmonic in $\Omega_\infty$ and in $\R^N\setminus\overline{\Omega_\infty}$, and for any $x_0\in \partial\Omega_{\infty}$ and $r>0$, by non-negativity of $v_{i_j}$,
	$$ 0=v_{i_j}(x_0)\leq \fint_{\partial B_r(x_0)} v_{i_j} \, d\sigma,$$
	thus $v_{i_j}$ satisfies a sub-mean value property, which is equivalent to subharmonicity in the sense of distributions. Since $v_{i,j}$ is also continuous in $\R^N$, it is in $H^1_{\loc}(\R^N)$ by \cite[Exercise 2.3]{Petrosyan_Shahgholian_Uraltseva_2012} and hence subharmonic in the weak sense.
	Recalling that $\partial\Omega_{\infty}$ is a hyperplane, consider $v'$ as the even reflection of $v_{i_j}$ across $\pa \Omega_\infty$.
	Then $v_{i_j}$ and $v'$ are subharmonic globally $\alpha$-H\"older continuous functions in $H^1_{\loc}(\R^N)$ such that $v_{i_j}v'\equiv 0$ in $\R^N$. Therefore, by Proposition \ref{prop: Liou sys 2}, either $v_{i_j}\equiv 0$ or $v'\equiv 0$. But then, by construction of $v'$, we conclude that $v_{i_j}\equiv 0$, and the proof is concluded.
\end{proof}

Thanks to the last lemma, we are now left to show a contradiction in the case $|I_\infty|\leq k-1$.

First we need the following Liouville-type theorem for harmonic functions on half-spaces, that is proved in a more general anisotropic setting in \cite[Lemma 4.2]{SoaTer22}.

\begin{lemma}\label{lem: liou half}
Let $H$ be a half-space, and suppose that $v \in C^0(\overline{H}) \cap C^2(H)$ is a harmonic function in $H$, with $v|_{\pa H} = const.$, and $v$ globally $\alpha$-H\"older continuous for some $\alpha \in (0,1)$. Then $v$ is constant in $H$.
\end{lemma}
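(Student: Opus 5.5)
Up to a rigid motion we may take $H=\{x_N>0\}$, and, subtracting the constant boundary value, we may assume $v=0$ on $\pa H$. The plan is to extend $v$ to a harmonic function on all of $\R^N$ by odd reflection, and then apply Proposition \ref{thm: Liouville 1 comp}~($i$).

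Define $\tilde v(x',x_N):=v(x',x_N)$ for $x_N\ge0$ and $\tilde v(x',x_N):=-v(x',-x_N)$ for $x_N<0$. Since $v\in C^0(\overline H)$ is harmonic in $H$ and vanishes on $\pa H$, the Schwarz reflection principle makes $\tilde v$ continuous on $\R^N$ and harmonic in $\R^N$. To justify this, I would check the mean value property at points of $\pa H$: there the spherical means vanish by oddness, while at all other points harmonicity is clear. I would then invoke the standard fact that continuous functions with the mean value property are harmonic. Alternatively, one can solve the Dirichlet problem on a ball centred on $\pa H$ with datum $\tilde v$; the solution is odd by uniqueness, hence vanishes on the hyperplane, and it agrees with $\tilde v$ on each half-ball by the maximum principle.

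Next I have to check that $\tilde v$ is globally $\alpha$-H\"older on $\R^N$ with seminorm at most $2[v]_{C^{0,\alpha}(H)}$. If $x,y$ lie in the same closed half-space, this follows directly from the bound for $v$ (or for its reflection). If $x_N>0>y_N$, let $z$ be the point where the segment $[x,y]$ meets $\pa H$. Since $\tilde v(z)=0$, we get
\[
|\tilde v(x)-\tilde v(y)|\le |\tilde v(x)-\tilde v(z)|+|\tilde v(z)-\tilde v(y)|\le [v](|x-z|^\alpha+|z-y|^\alpha)\le 2[v]|x-y|^\alpha .
\]

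Proposition \ref{thm: Liouville 1 comp}~($i$) then shows that $\tilde v$ is constant, and since it vanishes on $\pa H$ it is identically zero. Hence the original $v$ equals its constant boundary value on $H$.

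The only point requiring care is the harmonicity across the hyperplane under mere continuity up to $\pa H$. This is the classical reflection principle, and I expect no real obstacle there.
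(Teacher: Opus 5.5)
Your proof is correct. The paper does not prove this lemma itself; it refers to \cite[Lemma 4.2]{SoaTer22}, where the result is obtained in a more general anisotropic setting. So your argument is a self-contained alternative to that citation rather than a reproduction of an in-paper proof.

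Your route has four steps:
\begin{enumerate}
\item normalize to $H=\{x_N>0\}$ with zero boundary value;
\item extend $v$ by odd reflection, which is the Schwarz reflection principle and needs only $v\in C^0(\overline H)$;
\item check that the extension is globally $\alpha$-H\"older by splitting $[x,y]$ at its crossing point with $\pa H$;
\item conclude with Proposition~\ref{thm: Liouville 1 comp}~($i$).
\end{enumerate}
Since that proposition is already stated in the paper, your version keeps the boundary analysis independent of the external reference at essentially no cost. The cited lemma buys generality in the operator. If the anisotropy there is given by a constant symmetric positive definite matrix $A$, your argument still extends: the substitution $y=A^{-1/2}x$ turns solutions of $\mathrm{div}(A\nabla v)=0$ into harmonic functions, maps half-spaces to half-spaces, and preserves global H\"older continuity up to a constant.

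One small precision concerns your first justification of the reflection. For $\tilde v$, the mean value property is available on all spheres centred on $\pa H$, but only on small spheres centred elsewhere. The ``standard fact'' you need is therefore the local version: continuous functions satisfying the mean value property on all sufficiently small spheres are harmonic. This holds by comparison with the Poisson integral and the strong maximum principle. Your second justification, via the odd solution of the Dirichlet problem on balls centred on $\pa H$, avoids this point altogether.
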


We now rule out the case of diverging components.
\begin{lemma}\label{lemma: no diverging comp global}
It is not possible that $1\leq |I_\infty|\leq k-1$.
\end{lemma}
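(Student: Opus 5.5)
We argue by contradiction, assuming $1\le |I_\infty|\le k-1$, and we split according to the size of $M_n\prod_{i\in I_\infty}v_{i,n}^2(0)$, as in the interior case. The argument follows Lemmas \ref{lemma: k-1 diverg implies bound}--\ref{lemma: not 1 to k-2 divergent}, now working on $\Omega_\infty$. The new input at the boundary is twofold. First, any divergent component is ``flat'' at the boundary: its normalized version $w_{i,n}=v_{i,n}-v_{i,n}(0)$ converges to some $w_i$ that is constant on $\partial\Omega_\infty$, by Lemma \ref{lemma: limit on boundary}. Second, by Lemma \ref{lemma: at least d-k+1 vanishing global}, at least $d-k+1$ components already vanish identically in $\Omega_\infty$. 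This second fact replaces the delicate use of Corollary \ref{cor: liou seg} in Step 2 of Lemma \ref{lemma: not 1 to k-2 divergent}.

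\emph{Step 1: the interaction vanishes in $L^1_{\loc}(\Omega_\infty)$.} Every $k$-tuple $J$ contains an index $j\in I_F$ with $v_j\equiv0$: there are at least $d-k+1$ vanishing components and at most $k-1$ others. Combining this with Lemma \ref{lemma: basic convergence when v(0) bounded global}(iii), exactly as in \eqref{eq: termine interaz integrale va a 0}, we get
$$\int_K M_n v_{J,n}^2\,dx\le \|v_{j,n}\|_{L^\infty(K)}\int_K M_n v_{j,n}\prod_{\ell\in J\setminus\{j\}}v_{\ell,n}^2\,dx\to0$$
for every compact $K\subset\Omega_\infty$. Since every divergent component satisfies $\inf_K v_{i,n}\to+\infty$, dividing by $\inf_K v_{i,n}$ gives $\Delta v_{i,n}\to0$ in $L^1_{\loc}(\Omega_\infty)$ for $i\in I_\infty$, using also \eqref{eq: g_n to 0 global}. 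Notice that this step does not even require splitting according to whether $M_n\prod_{I_\infty}v_{i,n}^2(0)$ is bounded, because the vanishing of the limit components is provided for free by the boundary.

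\emph{Step 2: $1\notin I_\infty$.} If $1\in I_\infty$, we apply the local version of Lemma \ref{lemma: properties w_i}(ii) on compact subsets of $\Omega_\infty$. This shows that $w_1$ is harmonic in $\Omega_\infty$ and globally $\alpha$-H\"older. By Lemma \ref{lemma: limit on boundary}(ii), $w_1$ is continuous up to $\partial\Omega_\infty$ and constant there. Lemma \ref{lem: liou half} then forces $w_1$ to be constant, which contradicts the non-constancy of $w_1$ in $B_1\cap\Omega_\infty$. Hence $1\in I_F$ and $v_1$ is a genuine limit.

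\emph{Step 3: final contradiction for $v_1$.} By Lemma \ref{lemma: limit on boundary}(i), $v_1$ is constant on $\partial\Omega_\infty$, say equal to $c\ge0$. As in Step 3 of Lemma \ref{lemma: not 1 to k-2 divergent}, Step 1 gives that $v_1$ is harmonic in $\{v_1>0\}\cap\Omega_\infty$. We then use Lemma \ref{lemma: minimality implies positivity}, applied with balls $B_\rho(x_0)\subset B_R(x_0)\ssubset\Omega_\infty$ and with the $d-k+1$ vanishing components. This is legitimate thanks to the local minimality in Lemma \ref{lemma: basic prop global}(iii) and the $H^1_{\loc}$ convergence. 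It yields $v_1>0$ in every such ball where $v_1|_{S_\rho(x_0)}\not\equiv0$.

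Suppose first that $v_1\equiv0$ on some sphere inside $\Omega_\infty$. Since $v_1$ is subharmonic, $v_1$ vanishes on the ball it bounds. Exhausting $\Omega_\infty$ by balls on which $v_1\not\equiv0$ on the boundary sphere, we obtain that $v_1$ is either positive everywhere in $\Omega_\infty$ or zero on an open set.

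The main obstacle is the connectedness argument needed to conclude that $v_1>0$ throughout $\Omega_\infty$. Here I would argue that the zero set of $v_1$ in $\Omega_\infty$ is open, since any zero lies in a ball whose boundary sphere carries no positive values. The zero set is also closed, by continuity. Because $v_1$ is non-constant, it is not identically zero, and therefore $v_1>0$ in $\Omega_\infty$. Then $v_1$ is harmonic in the half-space, globally $\alpha$-H\"older, and constant on $\partial\Omega_\infty$, so Lemma \ref{lem: liou half} gives that $v_1$ is constant, contradicting Lemma \ref{lemma: basic convergence when v(0) bounded global}(ii).
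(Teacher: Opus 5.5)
Your proof is correct and uses the same ingredients as the paper: the $d-k+1$ vanishing components from Lemma \ref{lemma: at least d-k+1 vanishing global}, the resulting $L^1_{\loc}$ decay of the interaction, Lemma \ref{lemma: minimality implies positivity}, and Lemma \ref{lem: liou half}. It organizes the key step differently, however.

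The paper first shows that \emph{every} $v_i$ with $i\in I_F$ is constant. It splits according to the boundary value $c$ of $v_i$ on $\partial\Omega_\infty$:
\begin{itemize}
\item for $c=0$ it extends $v_i$ by zero, reflects evenly, and invokes Proposition \ref{prop: Liou sys 2}, with no minimality needed;
\item for $c>0$ it uses the lower bound $v_i\ge\delta$ in a strip near $\partial\Omega_\infty$ to apply Lemma \ref{lemma: minimality implies positivity} on large balls close to the boundary.
\end{itemize}
Only afterwards does it deduce $1\in I_\infty$ and reach the contradiction through $w_1$.

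You instead exclude $1\in I_\infty$ first and then treat only $v_1$. You obtain $v_1>0$ in the whole half-space from interior balls plus a connectedness argument: a zero $x$ of $v_1$ forces $v_1\equiv0$ on every admissible sphere $S_\rho(x)$, hence on $B_\rho(x)$ by subharmonicity. This works irrespective of $c$. It dispenses with the case distinction and the reflection argument, and it is precisely the reasoning the paper itself uses in Lemma \ref{lemma: not bounded global}. The paper's route gives the slightly stronger intermediate information that all bounded components are constant, but this is not needed for the contradiction.

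Two cosmetic points. First, the split according to the size of $M_n\prod_{i\in I_\infty}v_{i,n}^2(0)$ announced in your opening paragraph is never used; as you note yourself in Step 1, it is unnecessary. Second, the intermediate sentence ``either positive everywhere or zero on an open set'' is superseded by the open--closed argument that follows, so it can be dropped.
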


\begin{proof}
We argue by contradiction.
Fix $i\in I_F$. Since by Lemma \ref{lemma: at least d-k+1 vanishing global} for any $k$-tuple $J\subseteq [d]$ there exists $j\in J\cap I_F$ such that $v_{j}=0$, with estimates analogous to \eqref{eq: termine interaz integrale va a 0} and \eqref{eq: stima laplaciano L^1 va a zero} in Lemma \ref{lemma: 1 to k-2 divergent implies boundedness} we can prove that $v_i$ is harmonic in $\{v_i>0\}$.
Then, $v_{i}$ is weakly subharmonic in $\Omega_{\infty}$ and $v_i=c$ on $\pa\Omega_\infty$ for some $c\geq 0$ by Lemma \ref{lemma: limit on boundary}.
As a consequence,
\begin{itemize}
	\item if $c=0$, then arguing as in Lemma \ref{lemma: at least d-k+1 vanishing global} with the trivial extension outside $\Omega_{\infty}$ we obtain that $v_i\equiv 0$ in $\overline{\Omega_\infty}$;
	\item if $c>0$, then by uniform H\"older continuity there exist $\rho>0$ and $\delta>0$ such that $v_i(x) \geq \delta$ if $\dist(x,\pa\Omega_\infty) \leq \rho$. We apply Lemma \ref{lemma: minimality implies positivity} (which clearly still holds provided that $B_R(x_0)\ssubset\Omega_\infty$) on arbitrarily large balls $B_R\ssubset \Omega_{\infty}$ such that $\dist(B_R,\pa\Omega_\infty)<\rho$, so that $v_i|_{S_R}\not \equiv 0$, obtaining that $v_i>0$ in $\Omega_\infty$. Then $v_i$ is harmonic on $\Omega_{\infty}$, globally $\alpha$-H\"older continuous and constant on $\pa\Omega_{\infty}$, so $v_i\equiv c$ in $\overline{\Omega_{\infty}}$ by Lemma \ref{lem: liou half}.
\end{itemize}
Since $v_1$ is non-constant by Lemma \ref{lemma: basic convergence when v(0) bounded global}, this proves that $1\in I_\infty$.
Now, for every $B_r \ssubset \Omega_{\infty}$,
\begin{equation*}
	\int_{B_r} |\Delta w_{1,n}|\, dx \leq \frac{ M_n }{\inf_{B_r} v_{1,n}}\int_{B_r}v_{1,n}^2\sum_{\substack{J\subseteq [d]\setminus \{1\} \\ |J|=k-1}}\gamma_{J,1}v_{J,n}^2 \, dx+\int_{B_r} |g_{1,n}(x,v_{1,n}(x))|\, dx \to 0,
\end{equation*}
recalling also \eqref{eq: g_n to 0 global} and the estimate analogous to \eqref{eq: termine interaz integrale va a 0} that holds in this context. Then, noticing that the conclusion of Lemma \ref{lemma: properties w_i} $(ii)$ holds also in this context replacing $\R^N$ by $\Omega_\infty$, we obtain that $w_{1,n}\to w_1$ locally uniformly and in $H^1_{\loc}(\Omega_{\infty})$, with $w_1$ harmonic in $\Omega_\infty$, constant on $\pa\Omega_\infty$ and non-constant on $\Omega_\infty$ by Lemma \ref{lemma: limit on boundary}. But then Lemma \ref{lem: liou half} implies that $w_1$ is constant on $\Omega_\infty$, giving the desired contradiction.
\end{proof}

Finally, we rule out the case of all converging components.

\begin{lemma}\label{lemma: not bounded global}
	It is not possible that $\{\mf v_{n}(0)\}$ is bounded.
\end{lemma}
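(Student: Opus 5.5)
We argue by contradiction and suppose that $\{\mf v_n(0)\}$ is bounded. Then Lemma \ref{lemma: basic convergence when v(0) bounded global} applies to every component. Up to a subsequence, $\mf v_n\to\mf v$ locally uniformly in $\overline{\Omega_\infty}$, using the Lipschitz extension of Lemma \ref{lemma: limit on boundary}, and in $H^1_{\loc}(\Omega_\infty)$. Each limit $v_i$ is globally $\alpha$-H\"older continuous and constant on $\pa\Omega_\infty$. Moreover, by Lemma \ref{lemma: at least d-k+1 vanishing global}, at least $d-k+1$ components vanish identically in $\Omega_\infty$, and $v_1$ is not among them because it is non-constant. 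We then split according to whether $\{M_n\}$ is bounded or not, as in Lemmas \ref{lemma: not v e M bdd} and \ref{lemma: not M unb e v bdd}. In both cases the goal is to prove that $v_1$ is harmonic in $\Omega_\infty$, and then contradict Lemma \ref{lem: liou half}.

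\emph{Case $M_n\to M<\infty$.} Passing to the limit in \eqref{eq: eq satisfied by v_n global}, using \eqref{eq: g_n to 0 global} and $H^1_{\loc}$ convergence, gives
\[
\Delta v_1=Mv_1\sum_{\substack{J\subseteq[d]\setminus\{1\}\\ |J|=k-1}}\gamma_{J,1}v_J^2 \quad\text{in }\Omega_\infty .
\]
Every $k$-tuple $J\ni 1$ with $|J|=k$ contains an index $j$ for which $v_j\equiv0$, because at least $d-k+1$ components vanish. Hence every product $v_J^2$ with $J\subseteq[d]\setminus\{1\}$, $|J|=k-1$, vanishes identically, and $v_1$ is harmonic in $\Omega_\infty$.

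\emph{Case $M_n\to+\infty$.} Again, any $k$-tuple $J'$ contains an index $j'$ with $v_{j'}\equiv0$. Lemma \ref{lemma: basic convergence when v(0) bounded global} $(iii)$ then gives the analogue of \eqref{eq: termine interaz integrale va a 0} on compact subsets of $\Omega_\infty$. Repeating the argument of \eqref{eq: stima laplaciano L^1 va a zero} shows that $v_1$ is harmonic in $\{v_1>0\}$ and subharmonic in $\Omega_\infty$. Let $c\ge0$ be the constant value of $v_1$ on $\pa\Omega_\infty$. We use the same dichotomy as in Lemma \ref{lemma: no diverging comp global}.
\begin{itemize}
\item If $c=0$, we extend $v_1$ by zero outside $\Omega_\infty$ and reflect evenly across the hyperplane $\pa\Omega_\infty$. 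Proposition \ref{prop: Liou sys 2} then forces $v_1\equiv0$, contradicting Lemma \ref{lemma: basic convergence when v(0) bounded global} $(ii)$.
\item If $c>0$, H\"older continuity gives a strip near $\pa\Omega_\infty$ on which $v_1\ge\delta>0$. Apply Lemma \ref{lemma: minimality implies positivity} on arbitrarily large balls $B_R(x_0)\ssubset\Omega_\infty$ that meet this strip. Its hypotheses hold: uniform and $H^1$ convergence on the ball, $v_1|_{S_\rho(x_0)}\not\equiv0$, and $d-k+1$ components converging to $0$. It yields $v_1>0$ throughout $\Omega_\infty$, so $v_1$ is harmonic there.
\end{itemize}

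In every surviving case, $v_1$ is harmonic in the half-space, globally $\alpha$-H\"older continuous and constant on $\pa\Omega_\infty$. Lemma \ref{lem: liou half} then makes $v_1$ constant, contradicting Lemma \ref{lemma: basic convergence when v(0) bounded global} $(ii)$.

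The step requiring most care is the minimality argument when $c>0$. Lemma \ref{lemma: minimality implies positivity} is local, so the chosen balls must stay compactly inside $\Omega_\infty$ while still reaching the region where $v_1>0$. We also need the convergence near $\pa\Omega_\infty$ to be uniform, which we take from the extension in Lemma \ref{lemma: limit on boundary}. We must check as well that every point of $\Omega_\infty$ is eventually covered by such balls $B_\rho(x_0)$. This holds because both $R$ and $\rho<R$ can be taken arbitrarily large, with the ball centred deep inside and touching the strip only near its boundary sphere.
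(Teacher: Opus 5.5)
Your proof is correct. It uses the same core mechanism as the paper: harmonicity of $v_1$ on $\{v_1>0\}\cap\Omega_\infty$, then Lemma \ref{lemma: minimality implies positivity} on large balls $B_R\ssubset\Omega_\infty$ to get $v_1>0$ everywhere, then the half-space Liouville Lemma \ref{lem: liou half}.

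The organization differs. The paper makes no case distinction at all.

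\begin{itemize}
\item The interaction estimate from Lemma \ref{lemma: basic convergence when v(0) bounded global}~$(iii)$ does not use $M_n\to+\infty$, so harmonicity on $\{v_1>0\}$ holds whether or not $\{M_n\}$ is bounded.
\item Instead of looking at the boundary value $c$, the paper picks $x_0\in\Omega_\infty\cap B_1$ with $v_1(x_0)>0$; such a point exists because $v_1\ge0$ is non-constant.
\item Since $v_1$ is subharmonic, the maximum principle gives $v_1|_{S_\rho(y)}\not\equiv0$ for every ball $B_\rho(y)$ containing $x_0$.
\item Balls of this kind with $B_R(y)\ssubset\Omega_\infty$ exhaust the half-space, so positivity follows in one stroke.
\end{itemize}

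Your splits into $M_n$ bounded/unbounded and $c=0$/$c>0$ are therefore unnecessary, but they are not wasted. They show that minimality is needed only in the sub-case $M_n\to+\infty$, $c>0$.
\begin{itemize}
\item For bounded $M_n$, the limit equation already gives $\Delta v_1=0$, since every $(k-1)$-tuple avoiding the index $1$ contains a vanishing component.
\item For $c=0$, the zero extension, the even reflection and Proposition \ref{prop: Liou sys 2} force $v_1\equiv0$ directly.
\end{itemize}

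The paper's argument is shorter and uniform. Yours gives a finer account of where minimality actually enters, which is relevant to the open question, raised in the introduction, of removing that assumption.
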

\begin{proof}
	Suppose by contradiction that $\{\mf v_n(0)\}$ is bounded. Since we know that at least $d-k+1$ components are identically zero by Lemma \ref{lemma: at least d-k+1 vanishing global}, with analogous estimates as \eqref{eq: termine interaz integrale va a 0} and \eqref{eq: stima laplaciano L^1 va a zero} in Lemma \ref{lemma: not 1 to k-2 divergent} we can prove that $v_1$ is harmonic in $\{v_1>0\}\cap \Omega_\infty$ (notice that this argument works either if $\{M_n\}$ is bounded or not). Then, since $v_1$ is not constant on $B_1\cap\Omega_\infty$ by Lemma \ref{lemma: basic convergence when v(0) bounded global}, there exists $x_0\in \Omega_\infty\cap B_1$ such that $v_1(x_0)>0$. Then, taking arbitrarily large balls $B_R\ssubset\Omega_\infty$ containing $x_0$, by subharmonicity of $v_1$ and Lemma \ref{lemma: minimality implies positivity} we conclude that $v_1>0$ in $\Omega_\infty$ and thus it is harmonic on this set. Since it is non-constant, but constant on $\pa\Omega_\infty$ by Lemma \ref{lemma: limit on boundary}, we reach a contradiction with Lemma \ref{lem: liou half}.
\end{proof}

Lemmas \ref{lemma: at least d-k+1 vanishing global} - \ref{lemma: not bounded global} show a contradiction for every possible behaviour of the blow-up sequence, thus proving \eqref{eq:global estimate}.

\subsection{Conclusion of the proof of Theorem \ref{thm: regolarità al bordo}}

We are left to prove the second part of the theorem. From now on, we assume that \eqref{eq:growth assumption strong} holds with $p<2^*$. 

We will make use of the following classical lemma due to Stampacchia; see \cite[Lemma B.1]{Kinderlehrer_Stampacchia_2000}.

\begin{lemma}\label{lemma:stampacchia}
	Let $\varphi:[k_0,+\infty)\to[0,+\infty)$ be a nonnegative and nonincreasing function such that
	\begin{equation*}
		\varphi(h)
		\leq
		\frac{C}{(h-k)^\alpha}\varphi(k)^\gamma,
		\qquad \forall h>k\geq k_0,
	\end{equation*}
	where $C,\alpha,\gamma$ are positive constants with $\gamma>1$. Then
	\begin{equation*}
		\varphi(k_0+d)=0,
	\end{equation*}
	where
	\begin{equation*}
		d^\alpha
		=
		C\varphi(k_0)^{\gamma-1}
		2^{\alpha\gamma/(\gamma-1)}.
	\end{equation*}
\end{lemma}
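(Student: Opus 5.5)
This is Stampacchia's classical iteration lemma, and I would prove it by induction along a geometric sequence of levels, deriving a geometric decay of $\varphi$. Set $k_s:=k_0+d-d/2^s$ for $s\ge 0$. This gives an increasing sequence with $k_s\to k_0+d$ and $k_{s+1}-k_s=d/2^{s+1}$. The goal is the bound
\[
\varphi(k_s)\le \frac{\varphi(k_0)}{2^{s\mu}},\qquad \mu:=\frac{\alpha}{\gamma-1},
\]
for every $s\ge0$.

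\textbf{Base case and inductive step.} The case $s=0$ is trivial. Assume the bound holds for $s$. Applying the hypothesis with $h=k_{s+1}$ and $k=k_s$ gives
\[
\varphi(k_{s+1})\le \frac{C\,2^{(s+1)\alpha}}{d^\alpha}\,\varphi(k_s)^\gamma\le \frac{C\,2^{(s+1)\alpha}}{d^\alpha}\,\frac{\varphi(k_0)^\gamma}{2^{s\mu\gamma}}.
\]
Now substitute $d^\alpha=C\varphi(k_0)^{\gamma-1}2^{\alpha\gamma/(\gamma-1)}=C\varphi(k_0)^{\gamma-1}2^{\mu\gamma}$. The right-hand side becomes $\varphi(k_0)\,2^{(s+1)\alpha-s\mu\gamma-\mu\gamma}$. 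Since $\mu\gamma-\mu=\alpha$, the exponent equals $(s+1)\alpha-(s+1)\mu\gamma=-(s+1)\mu$. This is exactly the claim at step $s+1$.

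\textbf{Conclusion.} Since $\varphi$ is nonincreasing and $k_s<k_0+d$, we have $0\le\varphi(k_0+d)\le\varphi(k_s)\le \varphi(k_0)2^{-s\mu}\to0$. Hence $\varphi(k_0+d)=0$.

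\textbf{Degenerate case.} If $\varphi(k_0)=0$, then $d=0$, and monotonicity together with nonnegativity gives the conclusion directly. So the computation above may assume $\varphi(k_0)>0$ and $d>0$.

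The only delicate point is choosing the decay rate $\mu=\alpha/(\gamma-1)$ so that the powers of $2$ balance exactly. This choice is precisely what the constant $2^{\alpha\gamma/(\gamma-1)}$ in the definition of $d$ encodes.
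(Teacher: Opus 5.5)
Your proof is correct: the dyadic levels $k_s=k_0+d-d\,2^{-s}$ and the inductive bound $\varphi(k_s)\le\varphi(k_0)\,2^{-s\alpha/(\gamma-1)}$ are the classical Kinderlehrer--Stampacchia argument (their Lemma B.1), which the paper simply cites rather than reproving. Your exponent bookkeeping, the monotonicity step at the end, and the degenerate case $\varphi(k_0)=0$ are all handled correctly.
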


We first establish that uniform $H^1$ bounds yield uniform $L^\infty$ bounds. The proof combines a Brezis--Kato-type iteration with the Stampacchia lemma above, and it is at this stage that the strict inequality $p<2^*$ plays a crucial role.

\begin{lemma}\label{lemma:H^1 to Linfty}
	Under the hypotheses of Theorem \ref{thm: regolarità al bordo} $(ii)$, the family $\{\mf u_\b\}_{\b>0}$ is uniformly bounded in $L^\infty(\Omega)$.
\end{lemma}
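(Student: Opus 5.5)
The plan is to use the only structural information on the equation that does not depend on $\beta$: since the interaction term is nonnegative, each component satisfies, in the weak sense on $\Omega$,
\[
-\Delta u_{i,\b}\leq f_{i,\b}(x,u_{i,\b})\leq C(1+u_{i,\b}^{p-1}),\qquad u_{i,\b}\geq0,\qquad u_{i,\b}=\psi_i \text{ on }\partial\Omega,
\]
with $C$ independent of $\b$ by \eqref{eq:growth assumption strong}. I will therefore prove an $L^\infty$ bound for nonnegative subsolutions of this scalar inequality with Lipschitz boundary datum, depending only on $C$, $p$, $\Omega$, $\|\bs\psi\|_{L^\infty}$ and an upper bound for $\|u_{i,\b}\|_{H^1(\Omega)}$. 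Set $k_0:=\|\bs\psi\|_{L^\infty(\partial\Omega)}$. For every $h\geq k_0$ the truncation $(u_{i,\b}-h)^+$ belongs to $H^1_0(\Omega)$, so it and its powers can be used as test functions up to the boundary. This is why the boundary datum causes no trouble.

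First, a Brezis--Kato-type iteration improves integrability. I write $f_{i,\b}(x,u)\leq a(x)(1+u)$ with $a:=C(1+u_{i,\b}^{p-2})$. I then test with $\big((u_{i,\b}-k_0)^+\big)^{\,1+2s}$, suitably truncated at a level $T$ and later letting $T\to\infty$. Combining this with the Sobolev inequality yields, for $w=((u-k_0)^+)^{1+s}$,
\[
\|w\|_{L^{2^*}}^2\leq C_s\int a\,w^2 + \text{lower order}.
\]
Since $p<2^*$, we have $a\in L^{N/2+\delta}$ with a norm controlled by $\|u_{i,\b}\|_{L^{2^*}}\leq C\|u_{i,\b}\|_{H^1}$. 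The splitting $a=a\chi_{\{a>K\}}+a\chi_{\{a\leq K\}}$, with $\|a\chi_{\{a>K\}}\|_{L^{N/2}}$ small uniformly in $\b$, then absorbs the first piece. It is precisely here that strict subcriticality matters: equi-integrability of $a^{N/2}$ comes from the gap $p<2^*$ and the uniform $H^1$ bound, not from a single function. Finitely many steps give a $\b$-independent bound of $u_{i,\b}$ in $L^q(\Omega)$ for some $q$ large enough that the right-hand side $g:=C(1+u_{i,\b}^{p-1})$ lies in $L^{r}(\Omega)$ with $r>N/2$, uniformly in $\b$.

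Second, Stampacchia's truncation finishes the argument. For $h>k\geq k_0$, let $A(h):=\{u_{i,\b}>h\}$ and test with $(u_{i,\b}-h)^+$. Using Sobolev and H\"older inequalities gives
\[
\|(u-h)^+\|_{L^{2^*}}^2\leq \int g\,(u-h)^+\leq \|g\|_{L^r}\,\|(u-h)^+\|_{L^{2^*}}\,|A(h)|^{1-\frac1r-\frac1{2^*}}.
\]
Moreover $(h-k)|A(h)|^{1/2^*}\leq \|(u-k)^+\|_{L^{2^*}}$. Combining the two gives
\[
\varphi(h):=|A(h)|\leq \frac{C}{(h-k)^{2^*}}\,\varphi(k)^{\gamma},\qquad \gamma=2^*\Big(1-\frac1r-\frac1{2^*}\Big)>1,
\]
where $\gamma>1$ exactly because $r>N/2$. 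Lemma \ref{lemma:stampacchia} then gives $u_{i,\b}\leq k_0+d$, with $d$ depending only on $\|g\|_{L^r}$ and $|\Omega|$, hence independent of $\b$.

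The main obstacle is the first step. We must make the Brezis--Kato constants genuinely uniform in $\b$, which requires choosing the cutoff level $K$ uniformly through the uniform $L^{2^*}$ bound together with $p<2^*$. We must also justify the test functions rigorously through truncation, even though $u_{i,\b}$ is only known to be locally $C^{1,\alpha}$. The Stampacchia step is routine once $g\in L^r$ with $r>N/2$ is in hand.
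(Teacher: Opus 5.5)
Your proposal is correct and follows essentially the same route as the paper. Both arguments run a Brezis--Kato iteration on the truncation $(u_{i,\beta}-M)^+\in H^1_0(\Omega)$, splitting the coefficient $a$ at a level $K$ chosen uniformly in $\beta$ thanks to $\|a\|_{L^{N/2+\delta}}\leq C$ (from $p<2^*$ and the $H^1$ bounds), and then apply Stampacchia's lemma to $h\mapsto|\{u_{i,\beta}>h\}|$, where $\gamma=2^*(1/r'-1/2^*)>1$ exactly when $r>N/2$.

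Two small points to tidy up. For $N=2$, replace $L^{2^*}$ by $L^t$ with $t$ large; the $L^q$ bounds then follow directly from the Sobolev embedding, as in the paper. For $p<2$, your coefficient $C(1+u^{p-2})$ may be unbounded near $\{u=0\}$, so use a bounded one such as the paper's $C'(1+w^{p-1})/(1+w)$.
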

\begin{proof}
	Fix $M>\|\bs \psi\|_{L^\infty(\Omega)}$ and let
	\[
	w_{i,\beta}:=(u_{i,\beta}-M)^+\in H_0^1(\Omega).
	\]
	Then, since 
	$$
	-\Delta(u_{i,\beta}-M)
	\leq C(1+|u_{i,\beta}|^{p-1}),
	$$
	it is easy to see (arguing as in \cite[Exercise 2.1]{Petrosyan_Shahgholian_Uraltseva_2012}) that, in the weak sense,
	\begin{equation}\label{eq:14}
	-\Delta w_{i,\beta}
	\leq
	C\bigl(1+|u_{i,\beta}|^{p-1}\bigr)
	\leq
	C'\bigl(1+|w_{i,\beta}|^{p-1}\bigr)
	=
	a_{i,\beta}(x)(1+|w_{i,\beta}|),
	\end{equation}
	
	where
	\[
	a_{i,\beta}(x)
	:=
	C'\frac{1+|w_{i,\beta}|^{p-1}}{1+|w_{i,\beta}|}.
	\]
	Since $p<2^*$, there exists $\delta,C>0$ such that
	\begin{equation}\label{eq:15}
	\|a_{i,\beta}\|_{L^{N/2+\delta}(\Omega)}
	\leq C
	\qquad\text{for every }i,\beta,
	\end{equation}
	by the uniform $H^1$ boundedness of $\{\mf u_{\b}\}_\b$.
	
	\medskip
	
	\textbf{Step 1}: for every $1\leq q<\infty$ there exists $C=C(q)>0$ independent of $\b$ such that
	$$
	\sup_{i,\b} \norm{w_{i,\b}}_{L^q(\Omega)} \leq C.
	$$
	
	\medskip
	
	If $N=2$, the claim simply follows by the Sobolev embedding and the uniform $H^1$ bounds. Therefore, suppose that $N\geq 3$.
	
	The idea is to carry out a Brezis-Kato type argument. For every $i,\b$ and $s>0, L>1$, set
	\[
	v_{i,\b}:=\min\{(w_{i,\beta})^s,L\}
	\]
	and 
	\[
	\varphi_{i,\b}=w_{i,\beta}v_{i,\b}^2.
	\]
	Since $\varphi_{i,\b}$ is a nonnegative function and belongs to $H_0^1(\Omega)$, we can use it as test function in \eqref{eq:14}, getting
	\[
	\int_\Omega \nabla w_{i,\beta}\cdot\nabla\varphi_{i,\b}
	\leq
	\int_\Omega a_{i,\beta}(1+w_{i,\beta})\varphi_{i,\b}.
	\]
	Since $\nabla v_{i,\b}=s w_{i,\b}^{s-1}\nabla w_{i,\b} \chi_{\{w_{i,\b}^s \leq L\}}$, we have
	\[
	\begin{aligned}
		\int_\Omega \nabla w_{i,\beta}\cdot\nabla\varphi_{i,\beta}
		&=
		\int_\Omega v_{i,\beta}^2|\nabla w_{i,\beta}|^2
		+
		2s\int_{\{w_{i,\beta}^s\leq L\}}
		\,w_{i,\beta}^{2s}|\nabla w_{i,\beta}|^2
		\\
		&\geq
		\int_\Omega v_{i,\beta}^2|\nabla w_{i,\beta}|^2.
	\end{aligned}
	\]
	Notice also that
	\[
	\begin{aligned}
		\int_\Omega
		\left|\nabla(w_{i,\beta}v_{i,\b})\right|^2
		&=
		\int_\Omega
		\left|v_{i,\beta}\nabla w_{i,\beta}
		+w_{i,\beta}\nabla v_{i,\beta}\right|^2
		\\
		&\leq
		2\int_\Omega v_{i,\beta}^2|\nabla w_{i,\beta}|^2
		+
		2s^2
		\int_{\{w_{i,\beta}^s\leq L\}}
		w_{i,\beta}^{2s}|\nabla w_{i,\beta}|^2
		\\
		&\leq
		C\int_\Omega
		v_{i,\beta}^2|\nabla w_{i,\beta}|^2,
	\end{aligned}
	\]
	where now and in the following $C$ denotes a positive constant depending on $s$ and not on $\b$, and that may vary from a line to another.
	
	Hence, combining the last inequalities and noticing that $w_{i,\b}v_{i,\b}^2+w_{i,\b}^2v_{i,\b}^2\leq 2 + 2w_{i,\b}^2v_{i,\b}^2$,
	\[
	\begin{aligned}
		\int_\Omega
		\left|\nabla(w_{i,\beta}v_{i,\beta})\right|^2
		&\leq
		C\int_\Omega
		a_{i,\beta}(1+w_{i,\beta})w_{i,\beta}v_{i,\beta}^2
		\\
		&\leq
		C\big(1+
		2K\int_{\{a_{i,\beta}\leq K\}}
		w_{i,\beta}^2v_{i,\beta}^2
		+
		\int_{\{a_{i,\beta}>K\}}
		2a_{i,\beta} w_{i,\beta}^2v_{i,\beta}^2\big)
		\\
		&\leq
		C\left(1
		+
		2K\|w_{i,\beta}\|_{L^{2s+2}(\Omega)}^{2s+2}
		+
		2\left(
		\int_{\{a_{i,\beta}>K\}}
		a_{i,\beta}^{N/2}
		\right)^{2/N}
		\|w_{i,\beta}v_{i,\beta}\|_{L^{2^*}(\Omega)}^2\right)
		\\
	\end{aligned}
	\]
	for every $K>0$ and where in the last inequality we used that $v_{i,\b}\leq w_{i,\b}^s$.
	
	Notice that, by \eqref{eq:15}, we have
	$$
	\int_{\{a_{i,\beta}>K\}}
	a_{i,\beta}^{N/2} \leq \frac{1}{K^{\delta}} \int_{\{a_{i,\beta}>K\}} a_{i,\beta}^{N/2+\delta} \leq \frac{C}{K^{\delta}}.
	$$
	
	Therefore, combining the last two estimates,
	$$
	\int_\Omega
	\left|\nabla(w_{i,\beta}v_{i,\beta})\right|^2
	\leq
	C(1
	+
	2K\|w_{i,\beta}\|_{L^{2s+2}(\Omega)}^{2s+2}
	+
	\varepsilon(K)
	\|\nabla(w_{i,\beta}v_{i,\beta})\|_{L^2(\Omega)}^2),
	$$
	where $\varepsilon(K)\to0$ as $K\to+\infty$, uniformly with
	respect to $\beta$.
	
	Thus, for every fixed $s>0$, choosing $K$ sufficiently large, we obtain

	\[
	\|\nabla(w_{i,\beta}v_{i,\beta})\|_{L^2(\Omega)}^2
	\leq
	C
	+
	C\|w_{i,\beta}\|_{L^{2s+2}(\Omega)}^{2s+2}.
	\]
	Letting $L\to+\infty$, Fatou's Lemma gives
	\begin{equation}\label{eq:16}
	\|\nabla(w_{i,\beta}^{s+1})\|_{L^2(\Omega)}^2
	\leq
	C
	+ C\|w_{i,\beta}\|_{L^{2s+2}(\Omega)}^{2s+2}.
	\end{equation}
	Hence, if
	\[
	\sup_{i,\b}\|w_{i,\beta}\|_{L^{2s+2}(\Omega)}
	\leq C
	\]
	for some $C>0$,	by \eqref{eq:16} we deduce that
	\[
	w_{i,\beta}^{s+1}\in H_0^1(\Omega),
	\qquad
	\sup_{i,\b}\|w_{i,\beta}^{s+1}\|_{H^1_0(\Omega)}
	\leq C',
	\]
	where $C'>0$ depends on $s$.
	Therefore, by Sobolev's inequality,
	\[
	w_{i,\beta}\in L^{2^*(s+1)}(\Omega), \qquad
	\sup_{i,\b}\|w_{i,\beta}\|_{L^{2^*(s+1)}(\Omega)}
	\leq C,
	\]
	and observe that
	$$
	2^*(s+1)=2s'+2 \quad \iff \quad s'= \frac{N}{N-2}s+\frac{2}{N-2}.
	$$
	Starting from $s_0=\frac{2}{N-2}$, we can therefore iterate the reasoning, obtaining a diverging sequence of exponents $2s+2$. This concludes the proof of Step 1. 
	
	\medskip
	
	\textbf{Step 2}: from uniform $L^q$ to uniform $L^\infty$ estimate
	
	\medskip
	
	Suppose first that $N\geq 3$. Pick
	\[
	q>\frac{N}{2}(p-1),
	\]
	so that
	\[
	r=\frac{q}{p-1}>\frac N2.
	\]
	Recalling \eqref{eq:14} and Step 1, we have that, in the weak sense,
	\[
	-\Delta w_{i,\beta}
	\leq
	g_{i,\b},
	\]
	with
	\[
	\sup_{i,\beta}
	\norm{g_{i,\b}}_{L^r(\Omega)}\leq C
	\]
	for some $C>0$.
	
	For every $k\geq 0$ and $i,\b$ fixed, let
	\[
	A_{\beta,k}
	:=
	\{x\in\Omega:w_{i,\beta}(x)>k\},
	\qquad
	v_{\beta,k}:=(w_{i,\beta}-k)^+,
	\]
	where we drop for convenience the index $i$.
	Clearly $v_{\beta,k}\in H_0^1(\Omega)$, and hence, applying H\"older's inequality with $r$ and $r'$ and noting that $r'<2^*$,
	\[
	\begin{aligned}
		\int_\Omega |\nabla v_{\beta,k}|^2=\int_\Omega \nabla w_{i,\b} \cdot \nabla v_{\b,k}
		&\leq
		\int_\Omega
		g_{i,\b} v_{\beta,k}
		\\
		&\leq
		C\,|A_{\beta,k}|^{\frac1{r'}-\frac1{2^*}}
		\|v_{\beta,k}\|_{L^{2^*}(\Omega)},
	\end{aligned}
	\]
	with $C$ independent of $k,i$ and $\b$.
	By the Sobolev inequality,
	\[
	\norm{v_{\beta,k}}^2_{L^{2^*}(\Omega)}
	\leq
	C\,|A_{\beta,k}|^{\frac1{r'}-\frac1{2^*}}
	\| v_{\beta,k}\|_{L^{2^*}(\Omega)},
	\]
	and therefore
	\[
	\norm{v_{\beta,k}}_{L^{2^*}(\Omega)}
	\leq
	C\,|A_{\beta,k}|^{\frac1{r'}-\frac1{2^*}},
	\]
	where the inequality is trivially true also when
	$v_{\beta,k}=0$.
	
	On the other hand, if
	$k<h$,
	\[
	v_{\beta,k}
	=
	w_{i,\beta}-k
	\geq h-k
	\qquad\text{on }A_{\beta,h}.
	\]
	Thus
	\[
	\|v_{\beta,k}\|_{L^{2^*}(\Omega)}
	\geq
	|A_{\beta,h}|^{1/2^*}(h-k),
	\]
	and consequently
	\[
	(h-k)|A_{\beta,h}|^{1/2^*}
	\leq
	C\,|A_{\beta,k}|^{\frac1{r'}-\frac1{2^*}}.
	\]
	Therefore
	\[
	|A_{\beta,h}|
	\leq
	\frac{C}{(h-k)^{2^*}}
	|A_{\beta,k}|^{\gamma}, \quad \text{where } \gamma:=2^*
	\left(\frac1{r'}-\frac1{2^*}\right).
	\]
	Observing that
	\[
	2^*
	\left(\frac1{r'}-\frac1{2^*}\right)>1
	\quad\Longleftrightarrow\quad
	\frac{2^*}{r'}>2
	\quad\Longleftrightarrow\quad
	r'<\frac{N}{(N-2)}
	\quad\Longleftrightarrow\quad
	r>\frac N2.
	\]
	and applying Lemma \ref{lemma:stampacchia} to the function $t\mapsto \varphi(t)=|A_{\b,t}|$, we obtain the existence of $D>0$, independent of $i$ and $\b$, such that
	$$
	\left|\{x\in \Omega: w_{i,\b}>D\}\right|=0 \quad \text{for every } i,\b.
	$$
	This implies
	$$
	\sup_{i,\b}\norm{w_{i,\b}}_{L^\infty(\Omega)}\leq D,
	$$
	which is equivalent to
	$$
	\sup_{i,\b}\norm{u_{i,\b}}_{L^\infty(\Omega)}\leq M+D.
	$$
	
	It remains to consider the case $N=2$. Similarly as before, choosing $q>p-1$ and setting
	\[
	r:=\frac{q}{p-1}>1,
	\]
	we obtain
	\[
	-\Delta w_{i,\beta}\leq g_{i,\beta}
	\qquad\text{in the weak sense},
	\]
	with
	\[
	\sup_{i,\beta}\|g_{i,\beta}\|_{L^r(\Omega)}\leq C.
	\]
	Choose now any $t>2r'$. By the Sobolev embedding, the same argument as above gives
	\[
	\|v_{\beta,k}\|_{L^t(\Omega)}
	\leq
	C\,|A_{\beta,k}|^{\frac1{r'}-\frac1t}.
	\]
	Consequently, for every $h>k$,
	\[
	|A_{\beta,h}|
	\leq
	\frac{C}{(h-k)^t}
	|A_{\beta,k}|^\gamma,
	\qquad
	\gamma
	:=
	t\left(\frac1{r'}-\frac1t\right)
	=
	\frac{t}{r'}-1.
	\]
	By the choice $t>2r'$, we have $\gamma>1$ and Lemma \ref{lemma:stampacchia} proves the $L^\infty$ bounds as before. This concludes the proof.
\end{proof} 

\begin{proof}[Conclusion of the proof of Theorem \ref{thm: regolarità al bordo}]
	
	By \eqref{eq:global estimate}, \eqref{eq:growth assumption strong} and uniform $L^\infty$ bounds established in Lemma \ref{lemma:H^1 to Linfty}, we obtain boundedness in $C^{0,\alpha}(\overline{\Omega})$ for every $\alpha\in (0,\bar\nu)$. Then, up to a subsequence, $\mf{u}_\beta \to \tilde{\mf{u}}$ in $C^{0,\alpha}(\overline{\Omega})$, for every $\alpha \in (0,\bar \nu)$, and $\mf{u}_\beta \rightharpoonup \tilde{\mf{u}}$ in $H^1(\Omega)$ by Lemma \ref{lemma:H^1 to Linfty}, for some $\tilde{\mf{u}}\in H^1(\Omega)$ with $\tilde{\mf{u}}-\bs \psi\in H_0^1(\Omega)$.
	
	The $H^1_{\loc}$ convergence and the fact that
	\begin{equation*}
		\beta \int_{K} \sum_{\substack{J\subseteq [d]\\|J|=k}} \gamma_J u_{J,\beta}^2\,dx \to 0 \quad \text{for every compact } K\subset \Omega
	\end{equation*}
	follow from Theorem \ref{thm: regolarità interna}, that clearly can be applied under these stronger assumptions. 
	\end{proof}

\section{Limit problem with partial segregation}\label{sec:limit problem}
In this last section we prove the results concerning the limit problem, namely Theorem \ref{thm:limit problem} and Corollary \ref{cor: regolarità di tutti i minimi}.

\begin{proof}[Proof of Theorem \ref{thm:limit problem}]
	
We recall from Theorem \ref{thm: regolarità al bordo} \ref{thm point 2} that we have already established
\[
\mf u_{\beta_n}\rightharpoonup \tilde{\mf u}
\quad\text{in } H^1(\Omega),
\qquad
\mf u_{\beta_n}\to \tilde{\mf u}
\quad\text{in } C^{0,\alpha}(\overline{\Omega})
\]
for every $\alpha\in(0,\bar\nu)$, with $\tilde{\mf u}-\bs\psi\in H_0^1(\Omega)$ and $\tilde{\mf u}$ satisfying the partial $k$-segregation condition. In other words, $\tilde{\mf u}\in \cH_{\bs\psi}$.

Notice that, by \eqref{eq:convergence pointwise f_n to f} and \eqref{eq:growth assumption strong}, also $\bs f_\infty$ satisfies
\[
|f_{i,\infty}(x,s)|\leq C(1+|s|^{p-1})
\qquad
\text{for a.e. }x\in\Omega,\ \text{for every }s\in[0,+\infty),
\]
with $p<2^*$.

We first claim that
\[
\limsup_{n\to\infty}c_{\b_n,\bs f_{\b_n}}
\leq c_{\infty,\bs f_\infty}.
\]
Indeed, let $\eps>0$. By the definition of $c_{\infty,\bs f_\infty}$, there exists $\mf u_\eps\in \cH_{\bs\psi}$ such that
\begin{equation}\label{eq:8}
	J_{\infty,\bs f_\infty}(\mf u_\eps)
	\leq c_{\infty,\bs f_\infty}+\eps.
\end{equation}
Since $\cH_{\bs\psi}\subset\cU_{\bs\psi}$ and $\mf u_\eps$ satisfies the partial $k$-segregation condition, it is an admissible competitor for $J_{\b_n,\bs f_{\b_n}}$ and its interaction term vanishes identically. Therefore,
\begin{align*}
	c_{\b_n,\bs f_{\b_n}}
	&\leq J_{\b_n,\bs f_{\b_n}}(\mf u_\eps)\\
	&=J_{\infty,\bs f_{\b_n}}(\mf u_\eps)\\
	&=J_{\infty,\bs f_\infty}(\mf u_\eps)
	+\int_\Omega\sum_{i=1}^d
	\left(
	F_{i,\infty}(x,u_{i,\eps})
	-F_{i,\b_n}(x,u_{i,\eps})
	\right)\,dx\\
	&\leq c_{\infty,\bs f_\infty}+\eps+o(1)
	\qquad\text{as }n\to\infty,
\end{align*}
where in the last inequality we used \eqref{eq:8}, together with
\eqref{eq:growth assumption strong}, \eqref{eq:convergence pointwise f_n to f}, and the dominated convergence theorem. Passing to the $\limsup$ and then letting $\eps\to0$, the claim follows.

On the other hand, since $\tilde{\mf u}\in \cH_{\bs\psi}$, by \eqref{eq:growth assumption strong}, \eqref{eq:convergence pointwise f_n to f}, the uniform convergence $\mf u_{\b_n}\to\tilde{\mf u}$, the dominated convergence theorem, and the weak lower semicontinuity of the Dirichlet energy, we obtain
\begin{align*}
	c_{\infty,\bs f_\infty}
	&\leq J_{\infty,\bs f_\infty}(\tilde{\mf u})\\
	&=\sum_{i=1}^d\int_\Omega
	\left(
	\frac12|\nabla\tilde u_i|^2
	-F_{i,\infty}(x,\tilde u_i)
	\right)\,dx\\
	&\leq
	\liminf_{n\to\infty}
	\sum_{i=1}^d\int_\Omega
	\left(
	\frac12|\nabla u_{i,\b_n}|^2
	-F_{i,\b_n}(x,u_{i,\b_n})
	\right)\,dx\\
	&\leq
	\liminf_{n\to\infty}
	\int_\Omega
	\left(
	\sum_{i=1}^d\frac12|\nabla u_{i,\b_n}|^2
	+\frac{\beta_n}{2}
	\sum_{\substack{J\subseteq[d]\\ |J|=k}}
	\gamma_J u_{J,\b_n}^2
	-\sum_{i=1}^dF_{i,\b_n}(x,u_{i,\b_n})
	\right)\,dx\\
	&=
	\liminf_{n\to\infty}
	J_{\b_n,\bs f_{\b_n}}(\mf u_{\b_n})\\
	&=
	\liminf_{n\to\infty}c_{\b_n,\bs f_{\b_n}}\\
	&\leq
	\limsup_{n\to\infty}c_{\b_n,\bs f_{\b_n}}
	\leq c_{\infty,\bs f_\infty}.
\end{align*}

Hence all the inequalities above are equalities. In particular, we obtain the following conclusions:
\begin{itemize}
	\item the Dirichlet energies converge, namely
	\[
	\lim_{n\to\infty}
	\sum_{i=1}^d\int_\Omega|\nabla u_{i,\b_n}|^2\,dx
	=
	\sum_{i=1}^d\int_\Omega|\nabla\tilde u_i|^2\,dx.
	\]
	Since each term is weakly lower semicontinuous, this implies, for every $i=1,\dots,d$,
	\[
	\lim_{n\to\infty}
	\int_\Omega|\nabla u_{i,\b_n}|^2\,dx
	=
	\int_\Omega|\nabla\tilde u_i|^2\,dx.
	\]
	Together with the weak $H^1$ convergence, this yields
	\[
	\mf u_{\b_n}\to\tilde{\mf u}
	\qquad\text{strongly in }H^1(\Omega);
	\]
	
	\item the interaction energy vanishes:
	\[
	\lim_{n\to\infty}
	\beta_n\int_\Omega
	\sum_{\substack{J\subseteq[d]\\ |J|=k}}
	\gamma_Ju_{J,\b_n}^2\,dx=0;
	\]
	
	\item we have
	\[
	J_{\infty,\bs f_\infty}(\tilde{\mf u})
	=c_{\infty,\bs f_\infty},
	\]
	and hence $\tilde{\mf u}$ is a minimizer of the limit problem;
	
	\item the energy levels converge:
	\[
	\lim_{n\to\infty}
	c_{\b_n,\bs f_{\b_n}}
	=c_{\infty,\bs f_\infty}.
	\]
\end{itemize}

This concludes the proof.
\end{proof}

\begin{proof}[Proof of Corollary \ref{cor: regolarità di tutti i minimi}]
	Let $\tilde{\mf u}$ be a minimizer for \eqref{eq:c infty}. We will exhibit a suitable sequence $\{\mf u_{\beta_n}\}_n$ minimizing a family of functionals to which Theorem \ref{thm:limit problem} applies and such that $\mf{u}_{\beta_n}\to\tilde{\mf u}$. This will prove the regularity of $\tilde{\mf u}$.
	
	For every $n\in \N$, let $\beta_n=n$ and consider the following functional, defined for every $\mf u\in \cU_{\bs\psi}$:
	$$
	\tilde J_{n,\bs f}(\mf u)=\frac{1}{2}\int_{\Omega} \Big(\sum_{i=1}^d |\nabla u_i|^2 + n \sum_{\substack{J\subseteq [d]\\|J|=k}} u_J^2+\sum_{i=1}^d \arctan\left((u_i-\tilde u_i)^2\right)-2\sum_{i=1}^d F_{i}(x,u_i(x))\Big)\,dx.
	$$
	We first show that the problem
	\begin{equation} \label{eq:min penalized}
		\inf_{\mathcal{U_{\bs \psi}}}\tilde J_{n,\bs f}.
	\end{equation}
	admits a nonnegative minimizer. 
	
	Notice that it can be written in the form of problem \eqref{eq:def J_beta} as 
	\begin{equation*}
		\tilde J_{n,\bs f}=J_{n,\bs G}, \quad \text{where } G_i(x,u)= F_i(x,u)-\frac{1}{2}\arctan((u-\tilde u_i(x))^2),
	\end{equation*} 
	and the derivative of $G_i$ with respect to the second variable is given by
	$$
	g_i(x,s)=f_i(x,s)-\frac{(s-\tilde u_i(x))}{1+(s-\tilde u_i(x))^4}.
	$$
	Since $\bs f$ satisfies (A1) and (A2) and $\tilde u_i(x)\geq 0$ for a.e. $x\in \Omega$, it is straightforward to check that $\bs g$ also satisfies (A1) and (A2). Therefore, by Proposition \ref{prop:minim at fixed trace exist}, for every $n$ \eqref{eq:min penalized} admits a nonnegative minimizer $\mf u_n$, which is a weak solution to
	\begin{equation}\label{eq:system penalized u_n}
		\begin{cases} 
			\displaystyle -\Delta u_{i,n} = -n u_{i,n} \sum_{\substack{J\subseteq [d]\setminus \{i\}\\ |J|=k-1}} u_{J,n}^2 + f_{i}(x,u_{i,n})-\frac{u_{i,n}-\tilde u_i}{1+(u_{i,n}-\tilde u_i)^4}, \quad u_{i,n}\geq 0 & \text{in }\Omega,\\
			u_{i,n} = \psi_i & \text{on }\partial \Omega.
		\end{cases}
	\end{equation}	
	
	Moreover, since the family of forcing terms obtained setting $\bs g_n:= \bs g$ for every $n$ is independent of $n$, Theorem \ref{thm:limit problem} applies to the sequence $\{\mf u_n\}_n$ (notice that, by Remark \ref{rmk:bounds follow under A1-A2}, $\{\mf u_n\}_n$ is bounded in $H^1(\Omega)$), implying that, up to a subsequence,
	$\mf u_n\to \mf w$ in $H^1(\Omega)$ and in $C^{0,\alpha}(\overline{\Omega})$, for some $\mf w\in \mathcal{H}_{\bs \psi}$ which solves
	\begin{equation}\label{eq:13}
		\tilde J_{\infty,\bs f}(\mf w)=\min_{\mathcal{H}_{\bs \psi}}\tilde J_{\infty,\bs f},
	\end{equation}
	where 
	$$
	\tilde J_{\infty,\bs f}(\mf u)=\frac{1}{2}\int_\Omega\sum_{i=1}^{d} \left( |\nabla u_i|^2+\arctan\left((u_i-\tilde u_i)^2\right)-2F_i(x,u_i(x))\right)\, dx.
	$$
	Theorem \ref{thm:limit problem} also proves that
	\begin{equation}\label{eq:interazione arg penalizz va a zero}
		n \int_{\Omega} \sum_{\substack{J\subseteq [d]\\|J|=k}} u_{J,n}^2\,dx \to 0 \quad \text{and} \quad \min_{\mathcal{U_{\bs \psi}}} \tilde J_{n,\bs f}\to \min_{\mathcal{H}_{\bs \psi}}\tilde J_{\infty,\bs f} \quad \text{as } n\to\infty.
	\end{equation}
	
	We claim that $\mf w=\tilde{\mf{u}}$.
	Clearly, since $\tilde{\mf u}$ minimizes \eqref{eq:c infty} while $\mf w$ minimizes \eqref{eq:13} and both belong to $\mathcal{H}_{\bs \psi}$,
	\begin{align*}
		J_{\infty,\bs f}(\mf w)&\geq
		J_{\infty,\bs f}(\tilde{\mf u})
		= \tilde J_{\infty,\bs f}(\tilde{\mf u})
		\geq \tilde J_{\infty,\bs f}(\mf w)
		= \tilde J_{n,\bs f}(\mf u_n)+o(1)\\
		&= \frac{1}{2}\int_\Omega \sum_{i=1}^{d}
		\left(|\nabla u_{i,n}|^2+\arctan\left((u_{i,n}-\tilde u_i)^2\right)
		-2F_i(x,u_{i,n}(x))\right)\, dx + o(1)\\
		&= \frac{1}{2}\int_\Omega \sum_{i=1}^{d}
		\left(|\nabla w_i|^2-2F_i(x,w_i(x))\right)\, dx
		+\frac{1}{2}\int_\Omega \sum_{i=1}^{d}
		\arctan\left((u_{i,n}-\tilde u_i)^2\right) \, dx + o(1)\\
		&=J_{\infty,\bs f}(\mf w)+\frac{1}{2}\int_\Omega \sum_{i=1}^{d}
		\arctan\left((u_{i,n}-\tilde u_i)^2\right) \, dx + o(1).
	\end{align*}
	as $n\to\infty$, thanks to $H^1$ and uniform convergence and \eqref{eq:interazione arg penalizz va a zero}.
	This yields
	$$
	\lim_{n\to\infty} \int_\Omega \arctan\left((u_{i,n}-\tilde u_i)^2\right)\, dx = 0 \quad \text{for every $i=1,\dots,d.$}
	$$
	
	This clearly implies that $u_{i,n}\to \tilde u_i$ almost everywhere in $\Omega$ as $n\to \infty$, and this proves the claim recalling that $\mf u_{n}\to \mf w$ uniformly.
	
	To prove \eqref{eq:equaz in insieme positività}, fix $x_0\in \Omega$ such that $\tilde u_i(x_0)>0$. Then, by continuity and uniform convergence, $\inf_{B_\rho(x_0)} u_{i,n} \geq \delta$ for some $\rho,\delta>0$. Observe that, for any $\varphi\in C^\infty_c(B_\rho(x_0))$ and $J\subset [d]$, $|J|=k$,
	$$
	\left|\int_{B_\rho(x_0)} n \varphi u_{i,n}\sum_{\substack{J\subseteq [d]\setminus \{i\}\\ |J|=k-1}} u_{J,n}^2\right| \leq \frac{1}{\delta}\int_{B_\rho(x_0)} n |\varphi| u_{i,n}^2\sum_{\substack{J\subseteq [d]\setminus \{i\}\\ |J|=k-1}} u_{J,n}^2 \to 0 \quad \text{as } n\to\infty
	$$
	by \eqref{eq:interazione arg penalizz va a zero}.
	Then, passing to the limit in \eqref{eq:system penalized u_n} and using that $\mf u_n\to \tilde{\mf u}$ in $H^1$ we obtain that
	$$
	-\Delta \tilde u_i = f_i(x,\tilde u_i(x)) \quad \text{in } B_\rho(x_0).
	$$
	
	The proof of \eqref{eq:pohozaev} is very similar to the one in \cite[Sec. 6]{SoTe1}, thus we only give a sketch here: first of all, multiplying each equation in \eqref{eq:system penalized u_n} by $\nabla u_{i,n}(x)\cdot(x-x_0)$, integrating over $B_r(x_0)\subset \Omega$ and summing over $i$, we obtain after some rearrangements as in the classical Pohozaev identity
	\begin{multline*}
		r\int_{S_r(x_0)} \Big(\sum_{i=1}^{d}|\nabla u_{i,n}|^2+n\sum_{\substack{J\subseteq [d] \\ |J|=k}}u_{J,n}^2\Big)\, d\sigma = (N-2)\int_{B_r(x_0)}\sum_{i=1}^{d} |\nabla u_{i,n}|^2+Nn\int_{B_r(x_0)}\sum_{\substack{J\subseteq [d] \\ |J|=k}} u_{J,n}^2 \\
		+ 2r\int_{S_r(x_0)} \sum_{i=1}^{d}(\partial_\nu u_{i,n})^2 + 2r \sum_{i=1}^{d} \int_{S_r(x_0)}\left(F_i(x,u_{i,n})-\frac{1}{2}\arctan\left((u_{i,n}-\tilde u_i)^2\right)\right)\\
		-2\sum_{i=1}^{d}\Big[N\int_{B_r(x_0)}\left(F_i(x,u_{i,n})-\frac{1}{2}\arctan\left((u_{i,n}-\tilde u_i)^2\right)\right)+\int_{B_r(x_0)}\nabla_x F_i(x,u_{i,n})\cdot (x-x_0) \Big] \\
		+2N\sum_{i=1}^{d} \int_{B_r(x_0)} \frac{u_{i,n}-\tilde u_i}{1+(u_{i,n}-\tilde u_i)^4}\nabla \tilde u_{i}(x)\cdot(x-x_0).
	\end{multline*}
	Let now $x_0\in\Omega$ and $0<r<\dist(x_0,\partial\Omega)$. Integrating the last equation for $s\in (\bar r,r)$ for some $\bar r<r$ we obtain
	\begin{align*}
		\int_{B_r(x_0)\setminus B_{\bar r}(x_0)} &\Big(\sum_{i=1}^{d}|\nabla u_{i,n}|^2+n\sum_{\substack{J\subseteq [d] \\ |J|=k}}u_{J,n}^2\Big) = \int_{\bar r}^r \frac{(N-2)}{s}\Big(\int_{B_s(x_0)}\sum_{i=1}^{d} |\nabla u_{i,n}|^2 \,\Big) ds\\
		&+\int_{\bar r}^r \frac{N}{s}n\Big(\int_{B_s(x_0)}\sum_{\substack{J\subseteq [d] \\ |J|=k}} u_{J,n}^2\Big)\, ds + 2\int_{B_r(x_0)\setminus B_{\bar r}(x_0)} \sum_{i=1}^{d}\Big(\nabla u_{i,n}(x)\cdot \frac{x-x_0}{|x-x_0|}\Big)^2 \\
		&+ 2 \sum_{i=1}^{d} \int_{B_r(x_0)\setminus B_{\bar r}(x_0)}\left(F_i(x,u_{i,n})-\arctan\left((u_{i,n}-\tilde u_i)^2\right)\right)	\\
		&-2\sum_{i=1}^{d}\int_{\bar r}^r\frac{N}{s}\Big(\int_{B_s(x_0)}\left(F_i(x,u_{i,n})-\arctan\left((u_{i,n}-\tilde u_i)^2\right)\right)\,\Big) ds\\
		&-2\sum_{i=1}^{d}\int_{\bar r}^r \frac{1}{s}\Big(\int_{B_s(x_0)}\nabla_x F_i(x,u_{i,n})\cdot (x-x_0)\Big)\, ds \\
		&+2\sum_{i=1}^{d}\int_{\bar r}^r \frac{1}{s}\Big(\int_{B_s(x_0)}\frac{u_{i,n}-\tilde u_i}{1+(u_{i,n}-\tilde u_i)^4}\nabla \tilde u_{i}(x)\cdot (x-x_0)\Big)\, ds.
	\end{align*}
	Observe that, by the a.e. convergence $u_{i,n}\to\tilde u_i$ and the
	assumptions on $\nabla_xF_i$, the dominated convergence theorem yields
	\[
	\nabla_xF_i(x,u_{i,n})
	\to
	\nabla_xF_i(x,\tilde u_i)
	\qquad\text{in }L^1(B_r(x_0)).
	\] 
	Then, recalling also \eqref{eq:interazione arg penalizz va a zero} and $H^1$ convergence, letting $n\to \infty$ we obtain
	\begin{align*}
		\int_{B_r(x_0)\setminus B_{\bar r}(x_0)} &\sum_{i=1}^{d}|\nabla \tilde u_{i}|^2 = \int_{\bar r}^r \frac{(N-2)}{s}\Big(\int_{B_s(x_0)}\sum_{i=1}^{d} |\nabla \tilde u_{i}|^2 \,\Big) ds\\
		& + 2\int_{B_r(x_0)\setminus B_{\bar r}(x_0)} \sum_{i=1}^{d}\Big(\nabla \tilde u_{i}(x)\cdot \frac{x-x_0}{|x-x_0|}\Big)^2 \\
		&+ 2 \sum_{i=1}^{d} \int_{B_r(x_0)\setminus B_{\bar r}(x_0)}F_i(x,\tilde u_{i})	\\
		&-2\sum_{i=1}^{d}\int_{\bar r}^r\left(\frac{N}{s}\Big(\int_{B_s(x_0)}F_i(x,\tilde u_{i})\Big) + \frac{1}{s}\Big(\int_{B_s(x_0)}\nabla_x F_i(x,\tilde u_i)\cdot (x-x_0)\Big)\right)\, ds.
	\end{align*}
	Differentiating with respect to $r$ the above equation, we conclude.
\end{proof}

\subsection*{Acknowledgements}
The author was partially supported by the INDAM--GNAMPA 2026 research project ``Ottimizzazione Geometrica e Frontiere Libere''.
The author would also like to thank Nicola Soave for the fruitful discussions during the preparation of this work.
Finally, the author is grateful to the anonymous referees for their careful reading of the manuscript and for several insightful comments and suggestions, which contributed to improving the original version of the paper.

\printbibliography[heading=mybib]

\end{document}